\documentclass[11pt]{amsart}
 \usepackage{amssymb,amsthm}

\usepackage[]{graphicx}
\usepackage{psfrag}

\everymath{\displaystyle}

%Marginal Note

\newtheorem{theo}{Theorem}[section]
\newtheorem{prop}[theo]{Proposition}
\newtheorem{coro}[theo]{Corollary}
\newtheorem{lemma}[theo]{Lemma}
\newtheorem{teo}[theo]{Theorem}
\newtheorem{lem}[theo]{Lemma}
\newtheorem{cor}[theo]{Corollary}
\theoremstyle{definition}
\newtheorem{defi}[theo]{Definition}
\newtheorem{rema}[theo]{Remark}

\newtheorem{remark}[theo]{Remark}
\newtheorem{remarks}[theo]{Remarks}

\newcommand{\fin}{\hfill$\square$}

\newcommand{\ADS}{\mathbb{ADS}}

\newcommand{\wt}{\widetilde}

\newcommand{\Cl}{\mbox{Cl}}

\newcommand{\sD}{\mathfrak D}

\newcommand{\cQ}{\mathcal Q}

\def\d{\mathrm{d}}

\newcommand{\DD}{\mathbb{D}}

\newcommand{\HH}{\mathbb{H}}

\newcommand{\PP}{\mathbb{P}}

\newcommand{\RR}{\mathbb{R}}
\renewcommand{\SS}{\mathbb{S}}

\newcommand{\ZZ}{\mathbb{Z}}
\newcommand{\op}{\operatorname}%thierry
         % #1 restreint Ã  #2

\DeclareMathOperator{\SO}{SO}
\DeclareMathOperator{\T}{T}
\DeclareMathOperator{\AdS}{AdS}
\DeclareMathOperator{\Ein}{Ein}
\DeclareMathOperator{\Rep}{Rep}

\newcommand{\uEin}{\widetilde{\Ein}}
\newcommand{\ie}{\emph{ie. }}

         % norme
\newcommand{\sca}[2]{\langle #1 | #2\rangle}         % x scalaire y
\newcommand{\tq}{\,;\,} 
\newcommand{\mcal}[1]{\ensuremath{\mathcal{#1}}}
\newcommand{\orth}{\bot}
\newcommand{\cY}{\mcal{Y}}

\newcommand{\cD}{\mcal{D}}
\newcommand{\cE}{\mcal{E}}
\newcommand{\cF}{\mcal{F}}

\newcommand{\cL}{\mcal{L}}
\newcommand{\cC}{\mcal{C}}
\newcommand{\cU}{\mcal{U}}

\begin{document}

\title[Anosov AdS representations are Quasi-Fuchsian]
{Anosov AdS representations are Quasi-Fuchsian}

\author[Q. M\'erigot]{Quentin M\'erigot}%$^\ddagger$}
\email{Quentin.Merigot@sophia.inria.fr} 
\address{UMPA, \'Ecole Normale Sup\'erieure de Lyon}
\curraddr{INRIA Sophia-Antipolis}

%%%%%%%%%% PAGE DE TITRE

\begin{abstract}
Let $\Gamma$ be a cocompact lattice in $\SO(1,n)$.
A representation $\rho: \Gamma \to \SO(2,n)$ is
\textit{quasi-Fuchsian} if it is faithfull, discrete, and preserves an
acausal subset in the boundary of anti-de Sitter space - a particular
case is the case of \textit{Fuchsian representations}, \ie composition
of the inclusions $\Gamma \subset \SO(1,n)$ and $\SO(1,n) \subset
\SO(2,n)$. We prove that if a representation  is Anosov in the sense of
Labourie (cf. \cite{labourie}) then it is also
quasi-Fuchsian. We also show that Fuchsian representations are Anosov
: the fact that all quasi-Fuchsian representations are Anosov
will be proved in a second part by T. Barbot. The study 
involves the geometry of locally  anti-de Sitter spaces:
quasi-Fuchsian representations are holonomy representations of
globally hyperbolic spacetimes diffeomorphic to $\RR \times
\Gamma\backslash\HH^{n}$ locally modeled on $\AdS_{n+1}$. 
\end{abstract}

\maketitle

%\tableofcontents

\section{Introduction}
Let $\SO_0(1,n)$, $\SO_0(2,n)$ denote the identity components of respectively
$\SO(1,n)$, $\SO(2,n)$.
Let $\Gamma$ be a cocompact torsion free lattice in $\SO_{0}(1,n)$. 
For any Lie group $G$ let $\op{Rep}(\Gamma, G)$ denote the space of representations
of $\Gamma$ into $G$ equipped with the compact-open topology. 

In the case $G=\SO_{0}(1,n+1)$ we distinguish the \textit{Fuchsian representations:} they are the representations
obtained by composition of an embedding $\Gamma \subset \SO_{0}(1,n)$ (by Mostow rigidity,
there is only one up to conjugacy if $n \geq 3$) and any faithfull representation of $\SO_{0}(1,n)$ into $\SO_{0}(1,n+1)$.
Their characteristic property is to be faithfull, discrete, and to preserve a totally geodesic copy of $\HH^{n}$ into $\HH^{n+1}$.

If we relax the last condition by only requiring the existence of a $\rho(\Gamma)$-invariant topological
$(n-1)$-sphere in $\partial\HH^{n+1}$ (in the Fuchsian case, the boundary of the $\rho_{0}(\Gamma)$-invariant totally geodesic hypersurface
$\HH^{n} \subset \HH^{n+1}$ provides such a topological sphere), we obtain the notion of
\textit{quasi-Fuchsian representation.} We denote 
by $\cQ\cF(\Gamma, \SO_{0}(1,n+1))$ the set of quasi-Fuchsian representations.
It is well-known that $\cQ\cF(\Gamma, \SO_{0}(1,n+1))$ is a neighborhood of 
Fuchsian representations in the space of representations of $\Gamma$ into $\SO_{0}(1,n+1)$. One way to prove
this assertion, based on the Anosov character of the geodesic flow $\phi^{t}$ of the hyperbolic manifold $N=\Gamma\backslash\T^{1}\HH^{n}$
(for definitions, see \S~\ref{sub.defsgenerales}) goes as follows: $\phi^{t}$ is the projection of the geodesic flow
$\tilde{\phi}^{t}$ on $\T^{1}\HH^{n}$. For every $(x,v)$ in $\T^{1}\HH^{n}$, let $\ell^{+}(x,v)$, $\ell^{-}(x,v)$ be
the extremities in $\partial\HH^{n} \subset \partial\HH^{n+1}$ of the unique geodesic tangent to $(x,v)$. These
maps define an $\rho_{0}$-equivariant map 
$(\ell^{+}, \ell^{-}): \T^{1}\HH^{n} \to \partial\HH^{n+1} \times \partial\HH^{n+1} \setminus \sD$
where $\sD$ is the diagonal. To any $x$ in $\HH^{n}$ attach a metric $g^{x}$ on $\partial\HH^{n+1}$ varying with $x$ continuously 
and in a $\Gamma$-equivariant way - for example, take the angular metric at $x$, \ie the pull-back of the natural metric on
$\T^{1}_{x}\HH^{n+1}$ by the map associating to a point $p$ in $\partial\HH^{n+1}$ the unit tangent vector at $x$
of the geodesic ray starting from $x$ and ending at $p$. This family of metrics satisfies the following property: \textit{given $p$
in $\partial\HH^{n}$ and a tangent vector $w$ to $\partial\HH^{n+1}$ at $p$, the norm $g^{x^{t}}(w)$ increases exponentially 
with $t$ when $x^{t}$ describes a geodesic ray with final extremity $p$.} This property has the following consequence:
consider the flat bundle 
$E_{\rho_{0}}=\Gamma\backslash(\T^{1}\HH^{n} \times (\partial\HH^{n+1} \times \partial\HH^{n+1} \setminus \sD))$
associated to the representation $\rho_{0}$. Denote by $\pi_{\rho_{0}}: E_{\rho_{0}} \to \Gamma\backslash\T^{1}\HH^{n}$
the natural fibration. The map $(\ell^{+}, \ell^{-})$ defined above induces 
a section $s_{\rho_{0}}$ of $\pi_{\rho_{0}}$. The flow $\Phi^{t}(x,v, \ell^{+}, \ell^{-})=(\tilde{\phi}^{t}(x,v), \ell^{+}, \ell^{-})$
induces a flow $\phi_{\rho_{0}}^{t}$ on $E_{\rho_{0}}$ that preserves the image of $s_{\rho_{0}}$. Last but not least,
the existence of the metrics $g^{x}$ ensures that \textit{as a $\phi_{\rho_{0}}^{t}$-invariant closed subset of $E_{\rho_{0}}$,
the image of $s_{\rho_{0}}$ is a $\phi_{\rho_{0}}^{t}$-hyperbolic set} (cf. \S~\ref{sub.defsgenerales}).
When we deform $\rho_{0}$, the flat bundle and the flow $\phi^{t}_{\rho_{0}}$ can be continuously deformed.
The structural stability of hyperbolic invariant closed subsets ensures that for small deformations we still have 
a section $s_{\rho}$ of the flat bundle $E_{\rho}$, the image of which is $\phi^{t}_{\rho}$-hyperbolic. 
This section lifts to an equivariant map 
$\ell_{\rho}=(\ell_{\rho}^{+}, \ell^{-}_{\rho}): \T^{1}\HH^{n} \to \partial\HH^{n+1} \times \partial\HH^{n+1} \setminus \sD$.
It is quite straightforward to observe that $\ell^{+}_{\rho}$ must be constant along the stable leaves of the geodesic flow,
\ie the fibers of $\ell^{+}$. Therefore, it induces a continuous map $\bar{\ell^{+}}: \partial\HH^{n} \to \partial\HH^{n+1}$,
the image of which is the a $\rho(\Gamma)$-invariant topological $(n-1)$-sphere in $\partial\HH^{n+1}$.

This kind of argument has been extended in a more general framework by F. Labourie in \cite{labourie}:
he defined, for any pair $(G,Y)$ where $G$ is a Lie group acting on a manifold $Y$, the notion of 
\textit{$(G,Y)$-Anosov representation} (or simply Anosov representation when there is no ambiguity about
the pair $(G,Y)$). For a definition, see~\ref{sub.defsgenerales}. We denote by $\op{Anos}_{Y}(\Gamma, G)$
the space of $(G,Y)$-Anosov representations. By structural stability, $\op{Anos}_{Y}(\Gamma, G)$ is an open domain,
and simple, general arguments ensure that Anosov representations are faithfull, with discrete image formed
by loxodromic elements. As a matter of fact, $\cQ\cF(\Gamma, \SO_{0}(1,n+1))$ and $\op{Anos}_{Y}(\Gamma, \SO_{0}(1,n+1))$
where $Y=\partial\HH^{n+1} \times \partial\HH^{n+1} \setminus \sD$ coincide: we sketched above a proof
of one implication, but observe that the reverse implication, namely that quasi-Fuchsian representations are
Anosov, is less obvious (it can be obtained by adapting the arguments
given in the case $G=\SO_{0}(2,n)$ in T. Barbot's sequel to this
article \cite{anosovads2}). 

Anosov representations have been studied in different situations, mostly in the case $n=2$, \ie the case
where $\Gamma$ is a surface group:

-- in \cite{labourie}, F. Labourie proved that when $G$ is the group $\op{SL}(n, \RR)$ and $Y$ the frame variety,
one connected component of $\op{Anos}_{Y}(\Gamma, G)$, the \textit{quasi-Fuchsian component,} coincides with
a connected component of $\op{Rep}(\Gamma, G)$: the Hitchin component. Moreover, he proved that these quasi-Fuchsian representations
are \textit{hyperconvex}, \ie that they preserve some curve in the projective space $\PP(\RR^{n})$ with some very strong
convexity properties. In \cite{guichard}, O. Guichard then proved that conversely hyperconvex representations
are quasi-Fuchsian. Beware: $(G,Y)$-Anosov representations are not necessarily quasi-Fuchsian; in other words, 
$\op{Anos}_{Y}(\Gamma, G)$ is not connected. See~\cite{barflag}.

-- In \cite{burlab}, the authors also used the notion of Anosov representations for the study of representations 
of surface groups into the symplectic group of a real symplectic vector space with maximal Toledo invariant.

The present paper is devoted to the case where $\Gamma$ is a cocompact lattice of $\SO_{0}(1,n)$ that we deform in $G =\SO_{0}(2,n)$.
Whereas in the case of quasi-Fuchsian representations into $\SO_{0}(1,n+1)$ presented above the geometry of hyperbolic space
$\HH^{n+1}$ played an important role, the study of $\op{Rep}(\Gamma, \SO_{0}(2,n))$ deeply involves the geometry of
the Lorentzian analog of $\HH^{n+1}$, namely the anti-de Sitter space $\AdS_{n+1}$. In Lorentzian geometry,
appear some phenomena, latent in the Riemannian context, related to the causality notions. Whereas in hyperbolic
space pair of points are only distinguished by their mutual distance, in the anti-de Sitter space we have to distinguish
three types of pair of points, according to the nature of the geometry joining the two points: this geodesic may be 
spacelike, lightlike or timelike - in the last two cases, the points are said \textit{causally related.}

The conformal boundary $\partial\HH^{n+1}$ of the hyperbolic space plays an important role. Similarly,
anti-de Sitter space admits a conformal boundary: the \textit{Einstein universe} $\Ein_n$. It is a
conformal Lorentzian spacetime, also subject to a causality notion. 
In the following theorem, $\cY$ is the subset of $\Ein_{n} \times \Ein_{n}$
made of non-causally related pairs, \ie pairs of points that can be joined by a spacelike geodesic in $\AdS_{n+1}$:

\begin{theo}
\label{teo:maintheorem}
Any $(\SO_{0}(2,n), \cY)$-Anosov representation  $\rho: \Gamma \to
\SO_{0}(2,n)$ is quasi-Fuchsian.
\end{theo}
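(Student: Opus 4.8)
The plan is to reproduce, in the Lorentzian setting, the mechanism recalled above for $\SO_0(1,n+1)$, the essential new difficulty being causality. Write $N=\Gamma\backslash\T^1\HH^n$, let $\tilde\phi^t$ be the geodesic flow on $\T^1\HH^n$, and let $\ell^+,\ell^-:\T^1\HH^n\to\partial\HH^n$ be the forward and backward endpoint maps, so that the fibers of $\ell^+$ (resp. $\ell^-$) are the strong stable (resp. strong unstable) leaves of $\tilde\phi^t$. By definition of an $(\SO_0(2,n),\cY)$-Anosov representation (see \S\ref{sub.defsgenerales}), $\rho$ preserves a section of the flat $\cY$-bundle $E_\rho=\Gamma\backslash(\T^1\HH^n\times\cY)$ whose image is a hyperbolic set for the induced flow; lifting it yields a continuous $\rho$-equivariant map $\xi_\rho=(\xi_\rho^+,\xi_\rho^-):\T^1\HH^n\to\cY\subset\Ein_n\times\Ein_n$.

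First I would show, exactly as in the argument recalled in the introduction, that the hyperbolicity of the section forces $\xi^+_\rho$ to be constant on the fibers of $\ell^+$ and $\xi^-_\rho$ on the fibers of $\ell^-$. Consequently $\xi^\pm_\rho$ descend to continuous $\rho$-equivariant maps $\bar\xi^\pm:\partial\HH^n\to\Ein_n$. In the Riemannian sketch the single map $\bar\ell^+$ already produced the invariant sphere; here, in order to extract \emph{acausality}, I expect to need the extra identification of the two boundary maps, which is the genuinely new ingredient.

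The key step, and the one I expect to be the main obstacle, is to prove that $\bar\xi^+=\bar\xi^-$; denote the common map by $\xi$. I would argue at the dense set of attracting fixed points of loxodromic elements. Since an Anosov representation sends loxodromic elements of $\Gamma$ to elements of $\SO_0(2,n)$ acting proximally on $\Ein_n$, each such $\rho(\gamma)$ has a well-defined attracting fixed point $a_\gamma$ on $\Ein_n$; a standard consequence of proximality is that \emph{any} continuous $\rho$-equivariant map $\partial\HH^n\to\Ein_n$ must send the attracting fixed point $\gamma^+\in\partial\HH^n$ of $\gamma$ to $a_\gamma$, because $\bar\xi^{\pm}(\gamma^+)=\lim_{m}\rho(\gamma)^m\,\bar\xi^{\pm}(x)=a_\gamma$ for any $x\neq\gamma^-$ off the repelling set. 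Applying this to both $\bar\xi^+$ and $\bar\xi^-$ gives $\bar\xi^+(\gamma^+)=a_\gamma=\bar\xi^-(\gamma^+)$, and since such $\gamma^+$ are dense in $\partial\HH^n$, continuity yields $\bar\xi^+=\bar\xi^-$ everywhere. The delicate part, and where the Lorentzian geometry genuinely enters, is establishing the dynamical inputs underlying this clean statement: the correct descent of $\xi^\pm_\rho$, and the proximal (loxodromic) behaviour of $\rho(\gamma)$ on $\Ein_n$ together with the consistency of all of this with the causal structure encoded in $\cY$.

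Granting $\bar\xi^+=\bar\xi^-=\xi$, the theorem follows quickly. Given distinct $p,q\in\partial\HH^n$, choose $(x,v)\in\T^1\HH^n$ with $\ell^+(x,v)=p$ and $\ell^-(x,v)=q$, a unit vector along the geodesic from $q$ to $p$; then $(\xi(p),\xi(q))=(\bar\xi^+(x,v),\bar\xi^-(x,v))=\xi_\rho(x,v)\in\cY$, so $\xi(p)$ and $\xi(q)$ are non-causally related. In particular they are distinct, whence $\xi$ is injective and its image is acausal. Being a continuous injection of the compact sphere $\partial\HH^n\cong S^{n-1}$ into the Hausdorff space $\Ein_n$, $\xi$ is a homeomorphism onto its image, an embedded acausal topological $(n-1)$-sphere, and it is $\rho(\Gamma)$-invariant by equivariance. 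Since Anosov representations are faithful with discrete image, as recalled in the introduction, $\rho$ preserves an acausal subset of $\partial\AdS_{n+1}=\Ein_n$ and is therefore quasi-Fuchsian.
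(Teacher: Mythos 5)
Your architecture is the same as the paper's: descend $\xi^{\pm}_{\rho}$ to $\Gamma$-equivariant maps $\bar{\xi}^{\pm}:\partial\HH^{n}\to\Ein_{n}$, prove $\bar{\xi}^{+}=\bar{\xi}^{-}$ by working at the dense set of fixed points of loxodromic elements and invoking continuity, and then extract injectivity and acausality of the common image from the fact that any two distinct points of $\partial\HH^{n}$ are the two endpoints of a single geodesic, whose $\xi_{\rho}$-image lies in $\cY$. That last step is exactly the paper's corollary (phrased there via a point in the intersection of a stable and an unstable leaf), and your conclusion matches the definition of quasi-Fuchsian used here; the paper goes slightly further and packages the result as GH-regularity of $\rho$ via the invisible domain $E(\Lambda_{\rho})$ and its cosmological time, but that is not needed for the statement as worded.

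The genuine gap is in your justification of $\bar{\xi}^{\pm}(\gamma^{+})=a_{\gamma}$. In the chain $\bar{\xi}^{\pm}(\gamma^{+})=\lim_{m}\rho(\gamma)^{m}\bar{\xi}^{\pm}(x)=a_{\gamma}$, the first equality is fine (equivariance plus continuity), but the second requires $\bar{\xi}^{\pm}(x)$ to lie in the attracting basin of $a_{\gamma}$, and for a proximal element acting on $\Ein_{n}\subset\SS(\RR^{2,n})$ the complement of that basin is a whole hyperplane section, which nothing so far prevents the image of $\bar{\xi}^{\pm}$ from meeting. A continuous equivariant map is only forced to send $\gamma^{+}$ to \emph{some} fixed point of $\rho(\gamma)$, and proximal elements have non-attracting fixed points; so ``any continuous equivariant map sends $\gamma^{+}$ to $a_{\gamma}$'' is not a consequence of proximality alone, and the proximality itself is asserted rather than derived. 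The paper closes precisely this hole by a different mechanism (Lemmas~\ref{le:attractive} and~\ref{le:attractiveone}): the exponential growth of the Anosov metrics $g^{\tilde{\phi}^{nT}(x,v)}$ along a periodic orbit, transported through $\Gamma$-equivariance, shows directly that $d\rho(\gamma)^{-n}$ expands at $\zeta^{+}=\bar{\ell}^{+}_{\rho}(x^{+}_{\gamma})$, i.e.\ that the point $\zeta^{+}$ \emph{itself} is an attracting fixed point, with no basin argument needed; uniqueness of attracting fixed points, proved by passing to $\PP(\RR^{2,n})$ via a convexity argument, then identifies $\bar{\ell}^{+}_{\rho}(x^{+}_{\gamma})$ with $\bar{\ell}^{-}_{\rho}(x^{-}_{\gamma^{-1}})$. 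This is the one place where the hyperbolicity of the invariant section is actually consumed, and it is the step your sketch defers; as written, your route through the basin of attraction would not go through without an additional argument controlling the image of $\bar{\xi}^{\pm}$.
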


The geometric ingredient of this Theorem is the fact that
quasi-Fuchsian representations are precisely holonomy representations
of Lorentzian manifolds locally modelled on $\AdS_{n+1}$ which are
\textit{spatially compact, globally hyperbolic} (in short, GHC)
(\S~\ref{sec.basic}). In this introduction, let's simply mention that,
among many others, a characterization of these spacetimes is the fact
to admit  a proper time function (a time function being a function
with everywhere timelike gradient). It is only recently that the
relevance of this notion in constant curvature spacetimes started to
be perceived, a great impetus being given by the paper \cite{mess1}
when it was circulating in the physical and the mathematical community
as well in the 90's (see also \cite{mess2}). The classification of GHC
spacetimes of constant curvature $-1$ is one of the main motivation of
the present paper (the case of constant curvature $+1$ and $0$ being
already treated in respectively \cite{scannell}, \cite{barflat}), and
of its sequel by T. Barbot (\cite{anosovads2}) where the converse of
theorem \ref{teo:maintheorem} is proved.

\newpage

\section{Preliminaries}
\subsection{Basic causality notions}
\label{sec.basic}
We assume the reader acquainted to basic causality notions in Lorentzian manifolds
like \emph{causal} or \emph{timelike} curves, \emph{inextendible} causal curves,
\emph{time orientation,\/} \emph{future} and
\emph{past} of subsets, \emph{time function,} \emph{achronal} subsets, etc... 
We refer to \cite{beem} or \cite[\S~14]{oneill} for further details.

By \emph{spacetime} we mean here an oriented and time oriented manifold. 
A spacetime is \emph{strongly causal} if its topology admits a basis of \emph{causally convex} neighborhoods,
\ie neighborhoods $U$ such that any causal curve with extremities in $U$ is contained in $U$.

Recall that a spacetime $(M,g)$ is \emph{globally hyperbolic} (abbreviation GH)
if it admits a \emph{Cauchy hypersurface}, \ie
an achronal subspace $S$ which intersects every inextendible timelike
curve at exactly one point - such a subspace is automatically a topological locally Lipschitz
hypersurface (see \cite[\S~14, Lemma 29]{oneill}).
 
A globally hyperbolic spacetime is called
\emph{spatially compact} (in short GHC) if its Cauchy hypersurfaces are compact. 
An alternative and equivalent definition of GHC spacetimes is to require
the existence of a \emph{proper} time function.

\subsection{Anti-de Sitter space}
Let $\RR^{2,n}$ be the vector space of dimension $n+2$, with coordinates 
$(u,v, x_{1}, \ldots , x_{n})$, endowed with the quadratic
form:
\[ \mathrm{q}_{2,n}:=- u^{2} - v^{2} + x_{1}^{2} + \ldots + x_{n}^{2} \]

We denote by $\sca{x}{y}$ the associated scalar product. For any subset $A$ of
$\RR^{2,n}$ we denote $A^{\orth}$ the orthogonal of $A$, \ie the set
of elements $y$ in $\RR^{2,n}$ such that $\sca{y}{x}=0$ for every $x$ in $A$.
We also denote by $\cC_{n}$ the isotropic cone $\{ \mathrm{q}_{2,n}=0 \}$.

\begin{defi}
The anti-de Sitter space $\AdS_{n+1}$ is $\{ \mathrm{q}_{2,n}=-1 \}$
endowed with the Lorentzian metric obtained by restriction of $\mathrm{q}_{2,n}$.
\end{defi}

We will also consider the coordinates $(r, \theta, x_{1}, ... , x_{n})$ with:
\[ u=r\cos(\theta), v=r\sin(\theta)\]

Observe the analogy with the definition of hyperbolic space $\HH^{n}$ - moreover,
every subset $\{ \theta=\theta_{0} \}$ is a totally
geodesic copy of the hyperbolic space embedded in $\AdS_{n+1}$. More generally,
the totally geodesic subspaces of dimension $k$ in
$\AdS_{n+1}$ are connected components of the intersections of $\AdS_{n+1}$ with
the linear subspaces of dimension $(k+1)$ in $\RR^{2,n}$. In particular, 
geodesics are intersections with $2$-planes.

%% \begin{remark}
%% \label{rk.wall1}
%% Another particular case will play a special work in our work: \textit{$\AdS$-walls.} They are the intersections of $\AdS_{n+1}$
%% with the orthogonal $v^{\orth}$ of an element $v$ such that $\mathrm{q}_{2,n}(v) > 0$.
%% They are totally geodesic embeddings of $\AdS_{n}$. The \textit{half $\AdS$-spaces} defined by $v$ 
%% are the domains $\op{H}^{+}(v)=\{ x \in \AdS_{n+1} / \sca{v}{x} \geq 0 \}$ and
%% $\op{H}^{-}(v)=\{ x \in \AdS_{n+1} / \sca{v}{x} \leq 0 \}=\op{H}^{+}(-v)$.
%% \end{remark}

\begin{remark}
\label{rk.norme-euclidienne}
We will also often need an auxiliary Euclidean metric on
$\RR^{2,n}$. Let's fix once for all the euclidean norm $\Vert_{0}$
defined by: 

\[ \Vert (u, v, x_{1}, \ldots, x_{n}) \Vert_{0}^{2}:=u^{2} + v^{2} + x_{1}^{2} + ... + x_{n}^{2} \]

\end{remark}

\subsection{Conformal model }

\begin{prop}
  \label{p.causal-structure}
The anti-de Sitter space $\AdS_{n+1}$ is conformally equivalent to
$(\SS^1\times\DD^{n},-d\theta^2+ds^2)$, where $d\theta^2$ is the standard riemannian
metric on $\SS^1=\RR/2\pi\ZZ$, where $ds^2$ is the standard metric (of
curvature $+1$) on the sphere $\SS^{n}$ and $\DD^{n}$ is the open upper
hemisphere of $\SS^{n}$.
\end{prop}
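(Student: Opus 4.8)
The plan is to use the polar-type coordinates $(r,\theta,x_1,\dots,x_n)$ introduced above, exhibit $\AdS_{n+1}$ as a graph over $\SS^1\times\RR^n$, and read off the conformal factor. Write $|x|^2=x_1^2+\cdots+x_n^2$. In these coordinates $\mathrm{q}_{2,n}=-r^2+|x|^2$, so the defining equation $\mathrm{q}_{2,n}=-1$ becomes $r^2=1+|x|^2$; as $r>0$ this forces $r=\sqrt{1+|x|^2}$. Hence $(\theta,x)\mapsto(r\cos\theta,r\sin\theta,x)$ with $r=\sqrt{1+|x|^2}$ is a diffeomorphism from $\SS^1\times\RR^n$ onto $\AdS_{n+1}$, and I would pull back the induced Lorentzian metric through it.

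First I would compute that metric. From $u=r\cos\theta$, $v=r\sin\theta$ one gets $\d u^2+\d v^2=\d r^2+r^2\,\d\theta^2$, so the ambient form restricts to $-\d r^2-r^2\,\d\theta^2+\sum_i\d x_i^2$. Differentiating $r=\sqrt{1+|x|^2}$ gives $r\,\d r=\sum_i x_i\,\d x_i$, whence $\d r^2=\bigl(\sum_i x_i\,\d x_i\bigr)^2/(1+|x|^2)$, and the pulled-back metric on $\SS^1\times\RR^n$ is
\[
g=-(1+|x|^2)\,\d\theta^2+\sum_i\d x_i^2-\frac{\bigl(\sum_i x_i\,\d x_i\bigr)^2}{1+|x|^2}.
\]

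The key step is to factor out the smooth positive function $1+|x|^2\ge 1$:
\[
g=(1+|x|^2)\Bigl[-\d\theta^2+\Bigl(\frac{\sum_i\d x_i^2}{1+|x|^2}-\frac{\bigl(\sum_i x_i\,\d x_i\bigr)^2}{(1+|x|^2)^2}\Bigr)\Bigr]=:(1+|x|^2)\bigl(-\d\theta^2+h\bigr).
\]
This already shows $g$ is conformal to $-\d\theta^2+h$, so it remains to identify the Riemannian metric $h$ on $\RR^n$ with the round metric $ds^2$ on the open upper hemisphere $\DD^n\subset\SS^n$. I would use the central (gnomonic) projection $F\colon\RR^n\to\SS^n$, $F(x)=(x_1,\dots,x_n,1)/\sqrt{1+|x|^2}$, whose image is exactly $\DD^n$, and check $F^{\ast}ds^2=h$.

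The verification that $F^{\ast}ds^2=h$ is the real content, but it is short. Writing $\rho=\sqrt{1+|x|^2}$ and $F=\rho^{-1}(x,1)$, differentiation yields $\d F=\rho^{-1}(\d x,0)-\rho^{-3}\bigl(\sum_i x_i\,\d x_i\bigr)(x,1)$; expanding the Euclidean square $|\d F|^2$ and using $|(x,1)|^2=\rho^2$, the cross term and the last term combine to leave exactly $\rho^{-2}\sum_i\d x_i^2-\rho^{-4}\bigl(\sum_i x_i\,\d x_i\bigr)^2=h$. The only point demanding care is the choice of the gnomonic (rather than stereographic) projection: it is the central projection that matches the flat coordinate $x$ of the graph to the hemisphere compatibly with the conformal factor $1+|x|^2$. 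Beyond this the argument is purely computational, with no genuine obstacle.
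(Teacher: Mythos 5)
Your proof is correct and takes essentially the same route as the paper's: both factor out the conformal factor $r^2=1+|x|^2$ and identify the spatial part with the round metric on the open hemisphere via the central (gnomonic) projection $(r,x)\mapsto(x/r,1/r)$ from the hyperboloid slice, which is exactly the paper's map $(r,x)\mapsto(1/r,x/r)$ up to ordering of coordinates. You simply carry out explicitly, in graph coordinates, the ``easy computation'' that the paper leaves to the reader; there is no gap.
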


\begin{proof}
In the $(r, \theta, x_{1}, ... , x_{n})$-coordinates the $\AdS$ metric is:

\[ -r^{2}\op{d\theta}^{2} + \op{ds}_{hyp}^{2} \]

where $\op{ds}_{hyp}^{2}$ is the hyperbolic norm, \ie the induced metric on $\{ \theta=\theta_{0} \} \approx \HH^{n}$.
More precisely, $\{ \theta=\theta_{0} \}$ is a sheet of the hyperboloid $\{ -r^{2} + x^{2}_{1} + ... + x^{2}_{n}=-1 \}$.
The map $(r, x_{1}, \ldots , x_{n}) \to (1/r, x_{1}/r, \ldots, x_{n}/r)$ sends this hyperboloid on $\DD^{n}$, and an easy computation shows
that the pull-back by this map of the standard metric on the hemisphere is $r^{-2}\op{ds}_{hyp}^{2}$.
The proposition follows.
\end{proof}

Proposition~\ref{p.causal-structure} shows in particular that $\AdS_{n+1}$
contains many closed causal curves. But the universal covering 
$\wt\AdS_{n+1}$, conformally
equivalent to $(\RR\times\DD^{n},-d\theta^2+ds^2)$, contains no periodic causal curve.
It is strongly causal, but not globally hyperbolic.

\subsection{Einstein universe} Einstein universe $\Ein_{n+1}$ is the product $\SS^{1} \times \SS^{n}$
endowed with the metric $-d\theta^{2} + ds^{2}$ where $ds^{2}$ is as above the standard spherical metric. The universal
Einstein universe $\wt\Ein_{n+1}$ is the cyclic covering $\RR \times \SS^{n}$ equipped with the
lifted metric still denoted $-d\theta^{2} + ds^{2}$, but where $\theta$ now takes value in $\RR$. 
According to this definition, $\Ein_{n+1}$ and $\wt\Ein_{n+1}$ are Lorentzian manifolds, but it is
more adequate to consider them as conformal Lorentzian manifolds. 
We fix a time orientation: the one for which the coordinate $\theta$ is a time function on $\wt\Ein_{n+1}$.

In the sequel, we denote by $\mathrm{p}: \wt\Ein_{n+1} \to \Ein_{n+1}$ the
cyclic covering map. Let $\delta: \wt\Ein_{n+1} \to \wt\Ein_{n+1}$ be a generator of the Galois group of this 
cyclic covering. More precisely, we select $\delta$ so that for any $\tilde{x}$ in
$\wt\Ein_{n+1}$ the image $\delta(\tilde{x})$
is in the future of $\tilde{x}$.

Even if Einstein universe is merely a conformal Lorentzian spacetime, one can define
the notion of \textit{photons,} \ie (non parameterized) lightlike geodesics. We can also 
consider the causality relation in $\Ein_{n+1}$ and $\wt\Ein_{n+1}$. In particular, we define 
for every $x$ in $\Ein_{n+1}$ the \textit{lightcone} $C(x)$: it is the union of photons containing
$x$. If we write $x$ as a pair $(\theta, \mathrm{x})$ in $\SS^1 \times \SS^n$, the lightcone $C(x)$ is the set
of pairs $(\theta', \mathrm{y})$ such that $|\theta'-\theta|=d(\mathrm{x},\mathrm{y})$ where $d$ is distance
function for the spherical metric $ds^{2}$.

There is only one point in $\SS^n$ at distance $\pi$ of $\mathrm{x}$: the antipodal point $\mathrm{x}^\ast$.
Above this point, there is only one point in $\Ein_{n+1}$ contained in $C(x)$:
the antipodal point $x^\ast=(\theta+\pi, \mathrm{x}^\ast)$. The lightcone $C(x)$ with the points
$x$, $x^\ast$ removed is the union of two components:

-- the \textit{future cone:} it is the set $C^+(x):=\{ (\theta',\mathrm{y}) / \theta < \theta' < \theta+\pi, \; d(\mathrm{x},\mathrm{y})=\theta'-\theta \}$,

-- the \textit{past cone:} it is the set $C^-(x):=\{ (\theta',\mathrm{y}) / \theta-\pi < \theta' < \theta, \; d(\mathrm{x},\mathrm{y})=\theta-\theta' \}$.

Observe that the future cone of $x$ is the past cone of $x^\ast$, and that the past cone
of $x$ is the future cone of $x^\ast$.

According to Proposition~\ref{p.causal-structure} $\AdS_{n+1}$ (respectively $\wt\AdS_{n+1}$) conformally embeds in
$\Ein_{n+1}$ (respectively $\wt\Ein_{n+1}$). Hence the time orientation on $\Ein_{n+1}$
selected above induces a time orientation on $\AdS_{n+1}$ and $\wt\AdS_{n+1}$.
Since the boundary $\partial\DD^{n}$
is an equatorial sphere, the boundary $\partial\wt\AdS_{n+1}$ is a copy of the Einstein universe 
$\wt\Ein_{n}$. In other words, one can attach a ``Penrose boundary'' $\partial\wt\AdS_{n+1}$ to
$\wt\AdS_{n+1}$ such that $\wt\AdS_{n+1}\cup\partial\wt\AdS_{n+1}$ is conformally
equivalent to $(\SS^1\times\overline{\DD}^{n},-d\theta^2+ds^2)$, where
$\overline{\DD}^{n}$ is the closed upper hemisphere of $\SS^{n}$.

The restrictions of $\mathrm{p}$ and $\delta$ to $\wt\AdS_{n+1} \subset \wt\Ein_{n+1}$
are respectively a covering map over $\AdS_{n+1}$ and a generator of the Galois group
of the covering; we will still denote them by $\mathrm{p}$ and $\delta$.

\subsection{Isometry groups}
Every element of $\SO(2,n)$ induces an isometry of $\AdS_{n+1}$, and, for $n \geq 2$,
every isometry of $\AdS_{n+1}$ comes from an element of $\SO(2,n)$. Similarly, 
conformal isometries of $\Ein_{n+1}$ are projections of elements of $\SO(2,n+1)$ acting
on $\cC_{n+1}$ (still for $n \geq 2$).

In the sequel, we will only consider isometries preserving the orientation and the time orientation,
\ie elements of the neutral component $\SO_{0}(2,n)$ (or $\SO_{0}(2,n+1)$).

\subsection{Achronal subsets}
\label{sub.achro}
%A very nice feature of conformal models is that they allow an exceptionally easy
%description of achronal subsets: 
%It is quite easy to show that 
Recall that a subset of a conformal Lorentzian manifold is achronal (respectively acausal) if there is
no timelike (respectively causal) curve joining two distinct points of the subset.
In $\Ein_{n}\approx
(\RR\times\SS^{n-1},-d\theta^2+ds^2)$, every achronal subset is precisely the graph
of a $1$-Lipschitz function $f: \Lambda_0 \rightarrow {\mathbb R}$ where
$\Lambda_0$ is a subset of ${\mathbb S}^{n-1}$ endowed with its canonical
metric $d$). In particular,
the achronal closed topological hypersurfaces in $\partial\wt\AdS_{n+1}$ are
exactly the graphs of the $1$-Lipschitz functions $f:\SS^{n-1}\to\RR$: they are
topological $(n-1)$-spheres.

Similarly, achronal subsets of $\wt\AdS_{n+1}$ are graphs of $1$-Lipschitz functions
 $f: \Lambda_0 \rightarrow {\mathbb R}$ where $\Lambda_0$ is a subset of ${\mathbb D}^{n}$,
 and achronal topological hypersurfaces are graphs of $1$-Lipschitz maps $f: \DD^{n} \rightarrow {\mathbb R}$.

\emph{Stricto-sensu,\/} there is no achronal subset in $\Ein_{n+1}$ since closed timelike curves 
through a given point cover the entire
$\Ein_{n+1}$. Nevertheless, we can keep track of this notion in $\Ein_{n+1}$ by defining ``achronal'' subsets
of $\Ein_{n+1}$ as projections of geniune achronal subsets of $\wt\Ein_{n+1}$. This definition is
justified by the following results:

\begin{lem}
\label{le.achrinj}
The restriction of $\mathrm{p}$ to any achronal subset of $\wt\Ein_{n+1}$ is injective.
\end{lem}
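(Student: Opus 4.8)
The plan is to reduce everything to the explicit product structure of $\wt\Ein_{n+1}=(\RR\times\SS^{n},-d\theta^{2}+ds^{2})$ together with the explicit form of the deck transformation $\delta$, so that the statement becomes a one-line comparison between the translation length of $\delta$ in the $\theta$-direction and the spherical distance $d$.

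First I would record the comparison criterion that already underlies the ``$1$-Lipschitz graph'' description of achronal sets used above. Writing points as pairs $(\theta,\mathrm{x})\in\RR\times\SS^{n}$, I claim that two distinct points $(\theta_{1},\mathrm{x}_{1})$ and $(\theta_{2},\mathrm{x}_{2})$ are chronologically related if and only if $|\theta_{2}-\theta_{1}|>d(\mathrm{x}_{1},\mathrm{x}_{2})$. Indeed, along a future-directed timelike curve the speed in the $\theta$-direction strictly dominates the spherical speed, so integrating yields $\theta_{2}-\theta_{1}>d(\mathrm{x}_{1},\mathrm{x}_{2})$; conversely, when $|\theta_{2}-\theta_{1}|>d(\mathrm{x}_{1},\mathrm{x}_{2})$ one exhibits a timelike curve by travelling along a minimizing spherical geodesic from $\mathrm{x}_{1}$ to $\mathrm{x}_{2}$ while letting $\theta$ increase at unit speed. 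Hence a subset $A\subset\wt\Ein_{n+1}$ is achronal exactly when $|\theta_{2}-\theta_{1}|\le d(\mathrm{x}_{1},\mathrm{x}_{2})$ for every pair of its points.

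Next I would pin down $\delta$. Since $\mathrm{p}$ is the cyclic covering induced by $\RR\to\RR/2\pi\ZZ$ in the $\theta$-coordinate and $\delta$ was selected future-directed, one has $\delta(\theta,\mathrm{x})=(\theta+2\pi,\mathrm{x})$. Consequently two points of $\wt\Ein_{n+1}$ share their image under $\mathrm{p}$ if and only if they have the same $\SS^{n}$-coordinate and their $\theta$-coordinates differ by a nonzero integer multiple of $2\pi$.

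With these two ingredients the lemma follows by contradiction. Suppose $\mathrm{p}$ is not injective on the achronal set $A$: then there are distinct points $a=(\theta_{1},\mathrm{x})$ and $b=(\theta_{2},\mathrm{x})$ of $A$ with $\theta_{2}-\theta_{1}=2\pi k$ for some $k\in\ZZ\setminus\{0\}$. Then $|\theta_{2}-\theta_{1}|=2\pi|k|\ge 2\pi>0=d(\mathrm{x},\mathrm{x})$, so by the criterion $a$ and $b$ are chronologically related, contradicting the achronality of $A$; therefore $\mathrm{p}$ is injective on $A$. The only step deserving care is the comparison criterion itself, which is precisely the assertion that an achronal set is a $1$-Lipschitz graph; everything else is immediate, so I anticipate no serious obstacle.
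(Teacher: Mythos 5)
Your proof is correct and follows essentially the same route as the paper: both arguments reduce to the fact that achronality in $(\RR\times\SS^{n},-d\theta^{2}+ds^{2})$ forces $|\theta_{2}-\theta_{1}|\le d(\mathrm{x}_{1},\mathrm{x}_{2})$, while the deck group shifts $\theta$ by multiples of $2\pi$. The only cosmetic difference is that the paper invokes the diameter bound $d\le\pi<2\pi$, whereas you apply the inequality directly to two points over the same $\mathrm{x}\in\SS^{n}$, where $d(\mathrm{x},\mathrm{x})=0$.
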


\begin{proof}
Since the diameter of $\SS^{n}$ is $\pi$, the difference between the $t$-coordinates of two elements
of an achronal subset of $\wt\Ein_{n+1}$ is at most $\pi$. The lemma follows immediately.
\end{proof}

\begin{cor}
\label{cor.lift}
Let $\wt\Lambda_{1}$, $\wt\Lambda_{2}$ be two achronal subsets of $\wt\Ein_{n+1}$
admitting the same projection in $\Ein_{n+1}$. Then there is an integer $k$ such that:
$$\wt\Lambda_{1}=\delta^{k}\wt\Lambda_{2}$$
\fin
\end{cor}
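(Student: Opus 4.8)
The plan is to reduce the statement to the previous lemma and a straightforward structural argument about how the two achronal lifts can differ by an element of the deck transformation group. Since $\wt\Lambda_1$ and $\wt\Lambda_2$ are achronal subsets of $\wt\Ein_{n+1}$ with the same image $\Lambda$ under $\mathrm{p}$, Lemma~\ref{le.achrinj} guarantees that $\mathrm{p}$ restricts to a bijection $\wt\Lambda_i \to \Lambda$ for each $i$; call the inverses $s_1, s_2 \colon \Lambda \to \wt\Ein_{n+1}$. The composition $s_1 \circ (s_2)^{-1}$ is then a well-defined map from $\wt\Lambda_2$ to $\wt\Lambda_1$ sitting over the identity of $\Lambda$, so for every $\tilde{x} \in \wt\Lambda_2$ its image lies in the same fiber $\mathrm{p}^{-1}(\mathrm{p}(\tilde{x}))$. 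Because the fiber is exactly the $\delta$-orbit of $\tilde{x}$, there is a well-defined integer-valued function $k \colon \wt\Lambda_2 \to \ZZ$ such that the corresponding point of $\wt\Lambda_1$ equals $\delta^{k(\tilde{x})}(\tilde{x})$.

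The heart of the proof is to show that this function $k$ is constant. First I would observe that $\wt\Lambda_2$ is connected: it is the graph of a $1$-Lipschitz function $f \colon \Lambda_0 \to \RR$ over a subset $\Lambda_0$ of $\SS^{n}$ (by the description in \S~\ref{sub.achro}), and one can argue that the relevant subsets are connected, or more robustly one can work locally. Then the plan is to prove that $k$ is locally constant. Suppose toward a contradiction that two nearby points $\tilde{x}, \tilde{y} \in \wt\Lambda_2$ had $k(\tilde{x}) \neq k(\tilde{y})$, say $k(\tilde{x}) < k(\tilde{y})$. Since $\delta$ shifts the $\theta$-coordinate by a full period $2\pi$, the points $\delta^{k(\tilde{x})}(\tilde{x})$ and $\delta^{k(\tilde{y})}(\tilde{y})$ would differ in $\theta$-coordinate by at least $2\pi$ plus the difference coming from $\tilde{x}$ and $\tilde{y}$ themselves; but these two points lie in $\wt\Lambda_1$, which is achronal, hence by Lemma~\ref{le.achrinj} (or rather its proof) their $\theta$-coordinates can differ by at most $\pi$. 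This contradiction forces $k$ to be locally constant, and connectedness then upgrades this to global constancy.

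The main obstacle I expect is making the jump from \emph{achronal} to the sharp $\theta$-separation bound cleanly, and in particular controlling how $\delta$ interacts with the achronality constraint. The essential quantitative input is the same one powering Lemma~\ref{le.achrinj}: because the diameter of $\SS^{n}$ is $\pi$, any two points of an achronal set have $\theta$-coordinates differing by at most $\pi$, whereas $\delta$ translates $\theta$ by $2\pi$. So applying $\delta^{k(\tilde{x})}$ and $\delta^{k(\tilde{y})}$ with distinct exponents to two \emph{close} points of $\wt\Lambda_2$ necessarily produces points whose $\theta$-separation exceeds $\pi$, violating the achronality of $\wt\Lambda_1$. Once the local constancy is secured, the conclusion $\wt\Lambda_1 = \delta^{k} \wt\Lambda_2$ is immediate: the single integer $k$ works for every point, and applying $\delta^{k}$ to all of $\wt\Lambda_2$ lands exactly on $\wt\Lambda_1$ by construction of the fiberwise correspondence.

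One subtlety worth flagging is that I should confirm $\delta$ genuinely acts by a $2\pi$-translation in $\theta$ with no fixed points and that it commutes appropriately with the achronality structure; this is built into the definition of $\delta$ as a generator of the Galois group of the cyclic covering $\RR \times \SS^{n} \to \SS^1 \times \SS^{n}$, chosen so that $\delta(\tilde{x})$ lies in the future of $\tilde{x}$. With that in hand, the argument is essentially a discrete rigidity statement: two achronal graphs projecting to the same set cannot interleave across different sheets, because interleaving would force a timelike separation that achronality forbids.
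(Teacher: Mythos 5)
Your key quantitative mechanism---$\delta$ shifts the $\theta$-coordinate by $2\pi$ while achronality caps $\theta$-differences at $\pi$---is exactly the right one (the paper offers no written proof, presenting the corollary as immediate from Lemma~\ref{le.achrinj}). But your route through ``locally constant plus connected'' has a genuine hole: $\wt\Lambda_2$ need \emph{not} be connected. The corollary is stated for arbitrary achronal subsets, i.e.\ graphs of $1$-Lipschitz functions over arbitrary subsets $\Lambda_0\subset\SS^{n}$---a two-point set, a Cantor set, and so on---and local constancy of your function $k$ says nothing about points in different components. The repair is to drop the word ``nearby'' and run the comparison on an \emph{arbitrary} pair: writing $\tilde x=(\theta_1,\mathrm{x})$ and $\tilde y=(\theta_2,\mathrm{y})$ in $\wt\Lambda_2$, achronality of $\wt\Lambda_2$ gives $|\theta_1-\theta_2|\le d(\mathrm{x},\mathrm{y})$, and achronality of $\wt\Lambda_1$ gives $|\theta_1-\theta_2+2\pi(k(\tilde x)-k(\tilde y))|\le d(\mathrm{x},\mathrm{y})$; whenever $d(\mathrm{x},\mathrm{y})<\pi$ these two inequalities force $k(\tilde x)=k(\tilde y)$ with no topology at all. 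Since each point of $\SS^{n}$ has a unique antipode, any two points of $\Lambda_0$ whose bases are antipodal can (when $\Lambda_0$ has a third point) be linked through an intermediate point non-antipodal to both, so $k$ is constant.

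The residual case you cannot wave away is a pair with antipodal base points and $\theta$-gap exactly $\pi$, where your estimate degenerates to $2\pi-\pi=\pi$, which does \emph{not} exceed the allowed bound $d(\mathrm{x},\mathrm{x}^{\ast})=\pi$. This is not a technicality: the sets $\wt\Lambda_1=\{(\theta_0,\mathrm{x}),(\theta_0-\pi,\mathrm{x}^{\ast})\}$ and $\wt\Lambda_2=\{(\theta_0,\mathrm{x}),(\theta_0+\pi,\mathrm{x}^{\ast})\}$ are both achronal, have the same projection in $\Ein_{n+1}$, and satisfy $\wt\Lambda_1\neq\delta^{k}\wt\Lambda_2$ for every integer $k$. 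This is precisely the two-point ``pure lightlike'' configuration of Definition~\ref{def.purelightlike}; the corollary as literally stated fails there and is harmless in the paper only because such sets have empty invisible domain. So: replace the local-constancy-plus-connectedness step by the global pairwise estimate, and either exclude or explicitly flag the antipodal pure lightlike case.
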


\subsection{The Klein model $\ADS_{n+1}$ of the anti-de Sitter
  space} 
We now consider the quotient $\SS(\RR^{2,n})$ of
$\RR^{2,n}\setminus\{0\}$ by positive homotheties. In other
words, $\SS(\RR^{2,n})$ is the double covering of the projective space
$\PP(\RR^{2,n})$. We denote by $\SS$ the projection of
$\RR^{2,n}\setminus\{0\}$ on $\SS(\RR^{2,n})$. 
The projection $\SS$ is one-to-one in restriction to
$\AdS_{n+1}=\{ \mathrm{q}_{2,n}=-1 \}$. The \emph{Klein model} $\ADS_{n+1}$ of the
anti-de Sitter space is the projection of $\AdS_{n+1}$ in $\SS(\RR^{2,n})$,
endowed with the induced Lorentzian metric. 

$\ADS_{n+1}$ is also the projection of the open domain of
$\RR^{2,n}$ defined by the inequality $\{ \mathrm{q}_{2,n}<0 \}$. 
The topological boundary of $\ADS_{n+1}$ in $\SS(\RR^{2,n})$ is the
projection of the isotropic cone $\cC_n=\{ \mathrm{q}_{2,n}=0 \}$; we will denote this
boundary by $\partial\ADS_{n+1}$. By construction, the projection $\SS$
defines an isometry between $\AdS_{n+1}$ and $\ADS_{n+1}$. 
The continuous extension of this isometry is a canonical
homeomorphism between 
$\AdS_{n+1}\cup\partial\AdS_{n+1}$ and $\ADS_{n+1}\cup\partial\ADS_{n+1}$. 

For every linear subspace $F$ of dimension $k+1$ in $\RR^{2,n}$, we
denote by $\SS(F):=\SS(F\setminus\{0\})$ the corresponding projective subspace of
dimension $k$ in $\SS(\RR^{2,n})$. The geodesics of $\ADS_{n+1}$ are the
connected components of the intersections of $\ADS_{n+1}$ with the
projective lines $\SS(F)$ of $\SS(\RR^{2,n})$. More generally, the
totally geodesic subspaces of dimension $k$ in $\ADS_{n+1}$ are the
connected components of the intersections of $\ADS_{n+1}$ with the
projective subspaces $\SS(F)$ of dimension $k$ of $\SS(\RR^{2,n})$. 

\begin{remark}
In the conformal model, the spacelike geodesics of $\AdS_{n+1}$ ending at some point $x$
of $\partial\AdS_{n+1}$ are all orthogonal to $\partial\AdS_{n+1}$ at $x$ whereas in the Klein model 
spacelike geodesics ending at a given point in $\partial\ADS_{n+1}$ are not tangent one to the other. Hence the
homeomorphism between 
$\AdS_{n+1}\cup\partial\AdS_{n+1}$ and $\ADS_{n+1}\cup\partial\ADS_{n+1}$ is not
a diffeomorphism.
\end{remark}

\begin{defi}
\label{def.affine}
For every $x$ in $\AdS_{n+1}$, the \emph{affine domain} $U(x)$ of $\ADS_{n+1}$ is the connected component of
$\ADS_{n+1}\setminus\SS(x^{\orth})$ containing $x$. Let $V(x)$ be the connected component of
$\SS(\RR^{2,n})\setminus\SS(x^{\orth})$ containing $U(x)$. The boundary $\partial
U(x) \subset\partial\ADS_{n+1}$ of $U(x)$ in $V(x)$ is called the \textit{affine 
  boundary\/} of $U(x)$. 
\end{defi}

\begin{remark}
\label{rk.affine}
Up to composition by an element of the isometry group $SO_0(2,n)$ of
$\mathrm{q}_{2,n}$, we can assume that $\SS(x^{\orth})$ is the projection of the
hyperplane $\{ u=0\} $ in $\RR^{2,n}$ and $V(x)$ is the projection of the
region $\{ u>0\} $ in $\RR^{2,n}$. The map 
$$(u, v, x_1,x_2,\dots,x_{n+1})\mapsto
(t, \bar{x}_1,\dots, \bar{x}_{n}) :=
(\frac{v}{u}, \frac{x_1}{u},\frac{x_2}{u},\dots,\frac{x_{n}}{u})$$
induces a diffeomorphism between   
$V(x)$ and $\RR^{n+1}$ mapping the affine domain $U(x)$ to the region $\{ -t^2+\bar{x}_1^2+\dots+\bar{x}_n^2 <
1\} $. The affine boundary $\partial U(x)$ corresponds to the
hyperboloid $\{ -t^2+\bar{x}_1^2  +\dots+\bar{x}_n^2=1\} $. The intersections between
$U(x)$ with the totally geodesic subspaces of $\ADS_{n+1}$ correspond to the
intersections of the region $\{ -t^2+\bar{x}_1^2+\dots+\bar{x}_n^2 <
1\} $ with the affine subspaces of $\RR^{n+1}$.
\end{remark}

Although the real number $\langle x \mid y \rangle$ is
well-defined only for $x,y\in\RR^{2,n}$, its sign is well-defined for $x,y\in\SS(\RR^{2,n})$. 

\begin{lemma}
\label{l.causally-related}
Let $U$ be an affine domain in $\ADS_{n+1}$ and $\partial
U\subset\partial\ADS_{n+1}$ be its affine boundary. Let $x$ be be a point
in $\partial U$, and $y$ be a point in $U\cup\partial U$. There exists
a causal (resp. timelike) curve joining $x$ to $y$ in $U\cup\partial
U$ if and only if $\langle x\mid y\rangle \geq 0$ (resp. $\langle
x\mid y\rangle>0$).  
\end{lemma}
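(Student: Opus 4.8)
The plan is to transport the whole question into the conformal (static) model of Proposition~\ref{p.causal-structure}, where causality is read off directly, and to match that with an elementary computation of $\langle x\mid y\rangle$ in conformal coordinates. First I would normalise: applying an element of $\SO_0(2,n)$, which preserves both the causal structure and the sign of $\langle\cdot\mid\cdot\rangle$, I may assume as in Remark~\ref{rk.affine} that $\SS(x_0^{\orth})=\SS(\{u=0\})$ and $V=\SS(\{u>0\})$. Writing a point of $\AdS_{n+1}$ as $(u,v,\bar x)$ with $u=r\cos\theta$, $v=r\sin\theta$, $r=\sqrt{1+|\bar x|^{2}}$, and sending $\bar x$ to the hemisphere point $\mathrm p=(1/r,\bar x/r)\in\DD^{n}\subset\SS^{n}$, Proposition~\ref{p.causal-structure} identifies $\AdS_{n+1}$ conformally with $(\SS^{1}\times\DD^{n},-d\theta^{2}+ds^{2})$. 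One checks that $U=\{u>0\}$ becomes the slab $\theta\in(-\pi/2,\pi/2)$ over the open hemisphere, that its affine boundary $\partial U=\SS(\{\mathrm q_{2,n}=0,\,u>0\})$ becomes the same slab over the equator $\SS^{n-1}$, and hence that $U\cup\partial U$ is the slab over the closed hemisphere $\overline{\DD}^{n}$; in particular any two of its points have $\theta$-coordinates differing by less than $\pi$.

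Next I would compute the invariant. Lifting $x\in\partial U$ to the null vector $\hat x=(\cos\theta_x,\sin\theta_x,\omega_x)$ with $\omega_x\in\SS^{n-1}$ (so that the associated equatorial point is $\mathrm p_x=(0,\omega_x)$), and lifting $y$ to any $\hat y$ with positive $u$-coordinate, a direct expansion gives $\langle \hat x\mid \hat y\rangle = \lambda\,(\cos D-\cos\Delta\theta)$ for some positive $\lambda$, where $\Delta\theta=\theta_y-\theta_x$ and $D=d_{\SS^{n}}(\mathrm p_x,\mathrm p_y)$ is the spherical distance between the spatial projections. The only point is that $\mathrm p_x\cdot\mathrm p_y=\omega_x\cdot(\bar y/r_y)=\cos D$, which converts the Euclidean term $\omega_x\cdot\bar y$ appearing in $\langle\hat x\mid\hat y\rangle$ into $r_y\cos D$. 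Thus $\op{sign}\langle x\mid y\rangle=\op{sign}(\cos D-\cos\Delta\theta)$.

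Finally I would read off causality in the static metric. A future-directed causal curve $(\theta(s),\gamma(s))$ satisfies $\dot\theta\ge|\dot\gamma|_{\SS^{n}}\ge 0$, so its existence forces $|\Delta\theta|\ge\op{length}(\gamma)\ge D$; conversely, if $|\Delta\theta|\ge D$ one joins $x$ to $y$ by letting $\gamma$ run along a minimal spherical geodesic, which stays in the convex set $\overline{\DD}^{n}$ and hence in $U\cup\partial U$, while $\theta$ varies monotonically from $\theta_x$ to $\theta_y$ at speed $\ge|\dot\gamma|$. This curve is timelike exactly when $|\Delta\theta|>D$. Hence a causal (resp. timelike) curve joins $x$ and $y$ in $U\cup\partial U$ iff $|\Delta\theta|\ge D$ (resp. $>$). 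Since $|\Delta\theta|<\pi$ and $D\in[0,\pi]$, and $\cos$ is strictly decreasing on $[0,\pi]$, this is equivalent to $\cos D-\cos\Delta\theta\ge 0$ (resp. $>0$), i.e. to $\langle x\mid y\rangle\ge 0$ (resp. $>0$), which is the claim.

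I expect the main obstacle to be conceptual rather than computational: the $\AdS$-geodesic joining $x$ and $y$ is \emph{spacelike} whenever $\langle x\mid y\rangle\neq 0$, because the $2$-plane $\op{span}(\hat x,\hat y)$ then has Gram determinant $-\langle x\mid y\rangle^{2}<0$ and so is Lorentzian. Thus causal relatedness is never witnessed by the connecting geodesic, and one is forced to produce the non-geodesic causal curves above. The care needed is precisely the confinement to $U\cup\partial U$: one must check that $\theta$ stays in $(-\pi/2,\pi/2)$ (guaranteed by $|\Delta\theta|<\pi$) and that the minimal spatial geodesic stays in the closed hemisphere, the only delicate case being the antipodal one $D=\pi$, in which $y\in\partial U$ is spacelike-separated from $x$ and indeed $\langle x\mid y\rangle<0$, consistently with the absence of a causal curve.
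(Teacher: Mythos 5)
Your proof is correct, but it cannot be ``the same as the paper's'': the paper does not prove Lemma~\ref{l.causally-related} at all, it simply cites \cite[Proposition~5.10]{barbtz1} and \cite[Proposition 4.19]{BBZ}. What you have written is a self-contained version of the standard argument one finds in those references: normalise via Remark~\ref{rk.affine} so that $U\cup\partial U$ becomes the slab $\theta\in(-\pi/2,\pi/2)$ over $\overline{\DD}^{n}$ in the static model of Proposition~\ref{p.causal-structure}, compute $\langle x\mid y\rangle=\lambda(\cos D-\cos\Delta\theta)$ with $\lambda>0$, and observe that in the metric $-d\theta^{2}+ds^{2}$ causal (resp.\ timelike) connectibility is exactly $|\Delta\theta|\ge D$ (resp.\ $>D$), which matches the sign of $\cos D-\cos\Delta\theta$ because $|\Delta\theta|<\pi$ and $\cos$ is strictly decreasing on $[0,\pi]$. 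All the points where such an argument usually goes wrong are handled: you use the slab to make $\theta$ single-valued (so no winding can spoil the necessity direction), you check that the minimal spherical geodesic from an equatorial point into the closed hemisphere stays in the closed hemisphere (geodesic convexity of $\overline{\DD}^{n}$), and you treat the antipodal case $D=\pi$ separately. The only cosmetic quibble is that the strict inequality in the timelike case of the necessity direction deserves one explicit line ($\dot\theta>|\dot\gamma|$ everywhere gives $|\Delta\theta|>\operatorname{length}(\gamma)\ge D$), and the closing remark about the connecting geodesic being spacelike, while true, is not needed. What your route buys over the paper's is obvious: the reader gets an actual proof rather than a pointer to two external preprints.
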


\begin{proof}
See e.g. \cite[Proposition~5.10]{barbtz1} or~\cite[Proposition
4.19]{BBZ}. 
\end{proof}

\subsection{The Klein model of the Einstein universe}
Similarly, Einstein universe has a Klein model: it is
the projection $\SS(\cC_n)$ in $\SS(\RR^{2,n})$ of the
isotropic cone $\cC_n$ in $\RR^{2,n}$. 
The conformal Lorentzian structure can be defined in terms
of the quadratic form $\mathrm{q}_{2,n}$. In particular, an immediate
corollary of Lemma~\ref{l.causally-related} is:

\begin{cor}
\label{cor.list}
For $\Lambda \subseteq \Ein_n$, the following assertions are
equivalent. 
\begin{enumerate}
\item $\Lambda$ is achronal (respectively acausal)
\item when we see $\Lambda$ as a subset of $\SS(\cC_n) \approx \Ein_n$ the
scalar product $\langle x \mid y \rangle$ is non-positive
(respectively negative) for every distinct $x,y\in\Lambda$.
\end{enumerate}\fin
\end{cor}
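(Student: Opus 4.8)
The plan is to reduce the equivalence to a statement about individual pairs of points and then read it off Lemma~\ref{l.causally-related}. Both assertions range over unordered pairs of distinct points of $\Lambda$: achronality (resp.\ acausality) says that no two distinct points of $\Lambda$ are chronologically (resp.\ causally) related, and (2) prescribes the sign of $\langle x\mid y\rangle$ on each such pair. So it suffices to show, for two fixed distinct points $x,y\in\SS(\cC_n)\approx\Ein_n$, that they are causally related iff $\langle x\mid y\rangle\ge 0$ and chronologically related iff $\langle x\mid y\rangle>0$. Granting this, ``$\Lambda$ achronal'' becomes ``$\langle x\mid y\rangle\le 0$ for all distinct $x,y$'' and ``$\Lambda$ acausal'' becomes ``$\langle x\mid y\rangle<0$ for all distinct $x,y$'', which is exactly (1)$\iff$(2).

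To obtain the pairwise criterion I would place the two points in a common affine chart. Given distinct $x,y\in\partial\ADS_{n+1}$, one chooses a timelike vector $p$ (so $\SS(p)\in\ADS_{n+1}$) with $\langle x\mid p\rangle$ and $\langle y\mid p\rangle$ both of the sign cutting out $V(p)$, so that $x,y\in\partial U(p)$; such a $p$ exists for all $x,y$ except the antipodal pairs $y=x^{\ast}$, which can be treated separately. Lemma~\ref{l.causally-related}, applied in $U:=U(p)$ with $x$ and then $y$ as the distinguished boundary point, yields at once that there is a causal (resp.\ timelike) curve joining $x$ and $y$ inside $U\cup\partial U$ exactly when $\langle x\mid y\rangle\ge 0$ (resp.\ $>0$). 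Arithmetically this is the identity $\langle x\mid y\rangle=\cos d(\mathrm x,\mathrm y)-\cos(\theta_x-\theta_y)$ for unit isotropic representatives $x=(\cos\theta_x,\sin\theta_x,\mathrm x)$ and $y=(\cos\theta_y,\sin\theta_y,\mathrm y)$, combined with the monotonicity of $\cos$ on $[0,\pi]$; the same formula disposes of the antipodal case directly.

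The step I expect to be the real obstacle is identifying the causal relation that Lemma~\ref{l.causally-related} detects inside $U\cup\partial U$ with the conformal causal relation of $\Ein_n$ underlying the definition of achronality, which was set up through the cyclic cover $\wt\Ein_n$. Concretely one must verify that $\langle x\mid y\rangle>0$ corresponds to chronologically related lifts in $\wt\Ein_n$, i.e.\ that the pairwise sign faithfully records the \emph{first}-lightcone relation rather than some relation created by winding around the $\theta$-circle. This is exactly where the bound behind Lemma~\ref{le.achrinj} enters: any two points of an achronal set differ in their $\theta$-coordinate by at most the diameter $\pi$ of the spatial sphere, so the circle-distance between $\theta_x$ and $\theta_y$ equals the genuine difference of their achronal lifts, and no winding can intervene. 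With this identification in place, re-quantifying over all distinct pairs gives (1)$\iff$(2).
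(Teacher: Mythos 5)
Your pairwise criterion is correct and is exactly what the paper has in mind: the paper gives no proof at all here, simply declaring the corollary an immediate consequence of Lemma~\ref{l.causally-related}, and your two routes to the pairwise statement --- putting $x,y$ in a common affine boundary $\partial U(p)$ and invoking that lemma, or the direct identity $\langle x\mid y\rangle=\cos d(\mathrm{x},\mathrm{y})-\cos(\theta_x-\theta_y)$ --- both work (the identity alone is the cleanest, since it also bypasses the question of whether a causal curve in $U\cup\partial U$ produced by Lemma~\ref{l.causally-related} can be replaced by one lying in the conformal boundary $\Ein_n$ itself, where achronality is being tested).

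The gap is in the direction (2)$\Rightarrow$(1). With the paper's definition, ``$\Lambda$ achronal'' means that $\Lambda$ is the projection of a genuine achronal subset of $\wt\Ein_n$, i.e.\ that the angular part $\Lambda_0\subseteq\SS^{n-1}$ carries a single $1$-Lipschitz lift $f:\Lambda_0\to\RR$ of the $\SS^1$-coordinate; this is not a pairwise condition. Your closing paragraph, via the bound behind Lemma~\ref{le.achrinj}, shows only that such a lift has all its differences of absolute value at most $\pi$, hence equal to the circle distance --- that proves (1)$\Rightarrow$(2). For the converse you must assemble the pairwise inequalities $d_{\SS^1}(\theta_x,\theta_y)\le d(\mathrm{x},\mathrm{y})$ into one global lift, and this lifting problem is not always solvable: take three points of $\SS^{n-1}$ (for $n\ge3$) pairwise at spherical distance $2\pi/3$, with angular coordinates $0$, $2\pi/3$, $4\pi/3$; every pair satisfies $\langle x\mid y\rangle=0\le0$, yet an achronal lift would require three real numbers whose pairwise differences all have absolute value exactly $2\pi/3$, which is impossible. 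So (2)$\Rightarrow$(1) fails for arbitrary $\Lambda$ (a small perturbation gives a strict counterexample for the acausal version as well). The equivalence is correct, and your argument complete, when read as a statement about a single pair of points --- which is how it is actually used, e.g.\ to describe $\cY$ --- or when $\Lambda$ comes equipped with a preferred lift; in general the missing step is precisely the construction of that lift, and the paper's ``immediate'' glosses over the same point.
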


In the sequel, we will frequently identify $\Ein_{n}$ with $\SS(\cC_n)$,
since it is common to skip from one model to the other. For more details
about the Einstein universe, see \cite{franceseinstein, primer}.

\begin{remark}
\label{rk.desitter}
The affine boundary $\partial{U}(x)$ defined in remark~\ref{rk.affine}, as a domain of $\Ein_{n}$, is conformally isometric to
the \textit{de Sitter space}. Hence we also call it de Sitter domain.
%it $\partial{U}(x)$.
\end{remark}

%\subsection{Conformal chard into Minkowski space}
%\label{sub.conformodel}
%It is well-known that the boundary $\partial\HH^n$ of the hyperbolic space is 
%conformally flat. A similar remark applies to Einstein universe. More precisely
%(see \cite{penrose} or \cite{francesthese}):

%\begin{prop}
%\label{pro.einflat}
%Let $p$ be a point in $\Ein_{n+1}$, and let $U$ be a connected
%component of the complement in $\Ein_{n+1}$ of the lightcone $C(p)$.
%Then, there is a conformal diffeomorphism between $U$ and the Minkowski space 
%$\RR^{1,n}$.\fin
%\end{prop}

%Recall that Minkowski space $\RR^{1,n}$ is simply the
%vector space $\RR^{n+1}$ equipped with coordinates 
%$(x_0, x_{1}, \ldots , x_{n})$, endowed with the quadratic
%form:
%\[ q_{1,n}=- x_0^{2} + x_{1}^{2} + \ldots + x_{n}^{2} \]

%This conformal embedding preserves achronality. Achronal
%subsets in $\RR^{1,n}$ are graphs $\{ x_0=f(x_1, \ldots , x_n) \}$
%where $f: \Lambda_0 \to \RR$ is a 1-Lipschitz map from a subset 
%$\Lambda_0$ of $\RR^n$. It is acausal if and only if $f$ is contracting.

\subsection{Unit tangent bundle}
\label{sub.ell}
Denote by $\cE^1 \AdS_{n+1}$ (resp. $\cL^1 \AdS_{n+1}$) the
tangent bundle of unit spacelike (respectively lightlike) tangent vectors.
For such a vector $v$ tangent to $\AdS_{n+1}$ at $x$,
the geodesic issued from $(x,v)$ has a future and past limit in the
Einstein universe. We denote by ${\ell}^\pm: \cE^1 \AdS_{n+1} \cup \cL^1 \AdS_{n+1}
\to \Ein_n$ the applications which maps such a vector to its limits.

\section{Regular $\AdS$ manifolds}

\subsection{AdS regular domains}

Let $\wt\Lambda$ be a closed achronal subset of $\partial\wt\AdS_{n+1}$, and
$\Lambda$ be the projection of $\wt\Lambda$ in $\partial\AdS_{n+1}$.  We denote
by  $\wt E(\wt\Lambda)$ the \emph{invisible domain} of $\wt\Lambda$ in
$\wt\AdS_{n+1}\cup\partial\wt\AdS_{n+1}$, that is,

$$\wt E(\wt\Lambda)=:\left(\wt\AdS_{n+1}\cup\partial\wt\AdS_{n+1}\right) \setminus
\left(J^-(\wt\Lambda)\cup J^+(\wt\Lambda)\right)
$$  
where $J^-(\wt\Lambda)$ and $J^+(\wt\Lambda)$ are the causal past and the
causal future of $\wt\Lambda$ in
$\wt\AdS_{n+1}\cup\partial\wt\AdS_{n+1}=(\RR\times\overline{\DD}^{n-1},-d\theta^2+ds^2)$.
We denote by $\Cl(\wt E(\wt\Lambda))$ the closure of $\wt E(\wt\Lambda)$ in
$\wt\AdS_{n+1}\cup\partial\wt\AdS_{n+1}$ and by $E(\Lambda)$ the projection
of $\wt E(\wt \Lambda)$ in $\AdS_{n+1}\cup\partial\AdS_{n+1}$ 
(according to Corollary~\ref{cor.lift}, $E(\Lambda)$ only depends on $\Lambda$, not on $\wt\Lambda$).

\begin{defi}
A $n$-dimensional \emph{AdS regular domain} is a domain of the form
$E(\Lambda)$ where $\Lambda$ is the projection in $\partial\AdS_{n+1}$ of an
achronal subset $\wt\Lambda\subset\partial\wt{\mbox{AdS}_{n+1}}$ containing 
at least two points.
If $\wt\Lambda$ is a topological $(n-1)$-sphere, then $E(\Lambda)$ is
\emph{GH-regular} (this definition is motivated by theorem
\ref{t.iso-quotient}). 
\end{defi}

\begin{rema}
For every closed achronal set $\wt\Lambda$ in $\partial\wt\AdS_{n+1}$, the
invisible domain  $\wt E(\wt \Lambda)$ is \emph{causally  convex} in
of $\wt\AdS_{n+1}\cup\partial\wt\AdS_{n+1}$: this is an immediate consequence 
of the definitions. It follows that AdS regular domains are strongly causal.
\end{rema}

%The following remark is a key point for understanding the geometry of
%AdS regular domains.

\begin{rema}
\label{r.f-f+}
Let $\wt\Lambda$ be a closed achronal subset of $\partial\wt\AdS_{n+1}$.
Recall that $\wt\Lambda$ is the graph of a $1$-Lipschitz function
$f:\Lambda_0\to\RR$ where $\Lambda_0$ is a closed subset of
$\SS^{n-1}$ (\S~\ref{sub.achro}).  Define two functions
$f^{-},f^{+}:\overline{\DD}^{n}\to\RR$ as follows: 
\begin{align*}
f^{-}(\mathrm{x}) &:= \mbox{Sup}_{\mathrm{y} \in \Lambda_0} \{ f(\mathrm{y})-d(\mathrm{x},\mathrm{y}) \} , \\
f^{+}(\mathrm{x}) &:= \mbox{Inf}_{\mathrm{y} \in \Lambda_0} \{ f(\mathrm{y})+d(\mathrm{x},\mathrm{y}) \} ,
\end{align*}
where $d$ is the distance induced by $ds^2$ on
$\overline{\DD}^{n}$. It is easy to check that 
$$
\wt E(\wt\Lambda)=\{(\theta, \mathrm{x})\in\RR\times\overline{\DD}^{n} \mid
f^{-}(\mathrm{x})< \theta <f^{+}(\mathrm{x})\}.
$$
\end{rema}

The following lemma is a refinement of lemma~\ref{le.achrinj}:

\begin{lem}
\label{le.one-to-one}
For every (non-empty) closed achronal set
$\wt\Lambda\subset\partial\wt\AdS_{n+1}$, the projection of $\wt
E(\wt\Lambda)$ on $E(\Lambda)$ is one-to-one. 
\end{lem}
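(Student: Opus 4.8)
The plan is to reduce the injectivity of the covering projection $\mathrm{p}$ restricted to $\wt E(\wt\Lambda)$ to a one-dimensional estimate on the $\theta$-coordinate, using the explicit description of $\wt E(\wt\Lambda)$ furnished by Remark~\ref{r.f-f+}. First I would note that, working in the conformal coordinates $(\RR\times\overline{\DD}^n,-d\theta^2+ds^2)$ of $\wt\AdS_{n+1}\cup\partial\wt\AdS_{n+1}$, the Galois group of $\mathrm{p}$ is generated by $\delta:(\theta,\mathrm{x})\mapsto(\theta+2\pi,\mathrm{x})$. Hence two points of $\wt E(\wt\Lambda)$ have the same image under $\mathrm{p}$ if and only if they share the same $\mathrm{x}$-coordinate and their $\theta$-coordinates differ by a nonzero multiple of $2\pi$. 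So it suffices to prove that the fiber of $\wt E(\wt\Lambda)$ above any $\mathrm{x}\in\overline{\DD}^n$ has $\theta$-length at most $2\pi$.

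By Remark~\ref{r.f-f+} this fiber is exactly the open interval $(f^-(\mathrm{x}),f^+(\mathrm{x}))$, so the task becomes bounding $f^+(\mathrm{x})-f^-(\mathrm{x})$. Fixing any $\mathrm{y}_0\in\Lambda_0$, the definitions of $f^+$ and $f^-$ as an infimum and a supremum give at once $f^+(\mathrm{x})\le f(\mathrm{y}_0)+d(\mathrm{x},\mathrm{y}_0)$ and $f^-(\mathrm{x})\ge f(\mathrm{y}_0)-d(\mathrm{x},\mathrm{y}_0)$, whence $f^+(\mathrm{x})-f^-(\mathrm{x})\le 2\,d(\mathrm{x},\mathrm{y}_0)$. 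Since $\mathrm{x}$ and $\mathrm{y}_0$ both lie in $\SS^n$, whose diameter is $\pi$, this yields $f^+(\mathrm{x})-f^-(\mathrm{x})\le 2\pi$; this is the same diameter input already exploited in Lemma~\ref{le.achrinj}.

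Finally, an open interval of length at most $2\pi$ contains no two points whose difference is a nonzero multiple of $2\pi$. Combined with the first step, this shows that $\mathrm{p}$ identifies no two distinct points of $\wt E(\wt\Lambda)$, which is precisely the asserted injectivity. The argument is essentially elementary; the only place calling for a little care is the behaviour on the Penrose boundary $\partial\wt\AdS_{n+1}$, namely checking that the description of Remark~\ref{r.f-f+} and the open-interval conclusion persist for $\mathrm{x}\in\partial\DD^n=\SS^{n-1}$ (where the fiber may degenerate to the empty set, for instance when $\mathrm{x}\in\Lambda_0$), and that it is the \emph{openness} of the interval that rules out the borderline equality $f^+(\mathrm{x})-f^-(\mathrm{x})=2\pi$. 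I expect no genuine obstacle here, since Remark~\ref{r.f-f+} is already stated over the closed hemisphere $\overline{\DD}^n$.
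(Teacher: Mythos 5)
Your proof is correct and follows the same strategy as the paper's: both reduce the statement to the description $\wt E(\wt\Lambda)=\{(\theta,\mathrm{x}) : f^-(\mathrm{x})<\theta<f^+(\mathrm{x})\}$ from Remark~\ref{r.f-f+} and then bound the length of each $\theta$-fiber by a diameter estimate on the sphere. The one place you diverge is the final estimate: the paper asserts $f^+(\mathrm{x})-f^-(\mathrm{x})\le\pi$ by choosing an equatorial point within distance $\pi/2$ of $\mathrm{x}$, whereas you only prove the weaker bound $f^+(\mathrm{x})-f^-(\mathrm{x})\le 2d(\mathrm{x},\mathrm{y}_0)\le 2\pi$ and then invoke the openness of the interval to exclude a gap of exactly $2\pi$. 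Your variant is in fact the more careful one for the lemma as stated: the paper's $\pi$-bound implicitly requires the chosen nearby equatorial point to lie in $\Lambda_0$ (so that it can be fed into the infimum and supremum defining $f^\pm$), which is automatic when $\wt\Lambda$ is an achronal sphere ($\Lambda_0=\SS^{n-1}$) but can fail for a general non-empty closed achronal set --- for instance if $\Lambda_0$ is a small set near one point of the equator and $\mathrm{x}$ is nearly antipodal to it, $f^+(\mathrm{x})-f^-(\mathrm{x})$ genuinely approaches $2\pi$. Your $2\pi$-plus-openness argument covers all these cases, so it is the version one should actually retain.
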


\begin{proof}
We use the notations introduced in remark~\ref{r.f-f+}. For every
$\mathrm{x}\in\overline{\DD}^{n}$, there exists 
a point $\mathrm{y}\in\SS^{n-1}=\partial\overline{\DD}^{n}$ such that 
$d(\mathrm{x},\mathrm{y})\leq \pi/2$. Hence, for every $\mathrm{x}\in\overline{\DD}^{n}$, we
have $f^{+}(\mathrm{x})-f^{-}(\mathrm{x})\leq \pi$. Hence $\wt
E(\wt\Lambda)$ lies in
$
E=\{(\theta, \mathrm{x})\in\RR\times\overline{\DD}^{n} \mbox{ such that } f^{-}(\mathrm{x}) <
\theta < f^{-}(\mathrm{x})+\pi\}. 
$
The restriction to $E$ of the projection of $\wt\AdS_{n+1}\cup\partial\wt\AdS_{n+1}=\RR\times\overline{\DD}^{n}$
on  $\AdS_{n+1}\cup\partial\AdS_{n+1}=(\RR/2\pi\ZZ)\times\overline{\DD}^{n-1}$ is
obviously one-to-one.
\end{proof}

\begin{defi}
\label{def.purelightlike}
An achronal
subset $\wt\Lambda$ of $\wt\Ein_{n+1}$ is \emph{pure lightlike} if the associated subset
$\Lambda_0$ of $\SS^{n}$ contains two antipodal points $\mathrm{x}_0$ and $\mathrm{x}_0^{\ast}$ 
such that, for the associated 1-Lipschitz map $f:\Lambda_0\to\RR$ the equality $f(\mathrm{x}_0)=f(\mathrm{x}_0^{\ast}) +\pi$ holds.
\end{defi}

If $\wt\Lambda$ is pure lightlike, for every element $\mathrm{x}$ of
$\overline{\DD}^{n}$  we have $f^{-}(\mathrm{x})=f^{+}(\mathrm{x}) =
f(\mathrm{x}_0^{\ast}) + d(\mathrm{x}_0^{\ast}, \mathrm{x})=f(\mathrm{x}_0) - d(\mathrm{x}_0, \mathrm{x})$, 
implying that $\wt E(\wt\Lambda)$ is empty.
Conversely:

\begin{lem}
\label{le.purevide}
$\wt E(\wt\Lambda)$ is empty if and only if $\wt\Lambda$ is pure lightlike. More precisely,
if for some point $\mathrm{x}$ in $\DD^{n}$ the equality $f^{+}(\mathrm{x})=f^{-}(\mathrm{x})$ holds then
$\wt\Lambda$ is pure lightlike.
\end{lem}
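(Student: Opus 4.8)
The plan is to argue entirely in terms of the functions $f^{-}, f^{+} : \overline{\DD}^{n} \to \RR$ introduced in Remark~\ref{r.f-f+}. The first thing I would record is that $f^{-} \le f^{+}$ holds everywhere on $\overline{\DD}^{n}$: for any $\mathrm{y}^{-}, \mathrm{y}^{+} \in \Lambda_0$ the $1$-Lipschitz bound $f(\mathrm{y}^{-}) - f(\mathrm{y}^{+}) \le d(\mathrm{y}^{-}, \mathrm{y}^{+})$ together with the triangle inequality $d(\mathrm{y}^{-}, \mathrm{y}^{+}) \le d(\mathrm{x}, \mathrm{y}^{-}) + d(\mathrm{x}, \mathrm{y}^{+})$ gives $f(\mathrm{y}^{-}) - d(\mathrm{x}, \mathrm{y}^{-}) \le f(\mathrm{y}^{+}) + d(\mathrm{x}, \mathrm{y}^{+})$, and taking the supremum over $\mathrm{y}^{-}$ and the infimum over $\mathrm{y}^{+}$ yields $f^{-}(\mathrm{x}) \le f^{+}(\mathrm{x})$. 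Since $\wt E(\wt\Lambda) = \{(\theta, \mathrm{x}) \mid f^{-}(\mathrm{x}) < \theta < f^{+}(\mathrm{x})\}$, emptiness of $\wt E(\wt\Lambda)$ is therefore equivalent to $f^{-} \equiv f^{+}$ on $\overline{\DD}^{n}$. As the implication ``pure lightlike $\Rightarrow$ empty'' was already checked just before the statement, the whole lemma reduces to the sharper assertion: if $f^{+}(\mathrm{x}) = f^{-}(\mathrm{x})$ at even one interior point $\mathrm{x} \in \DD^{n}$, then $\wt\Lambda$ is pure lightlike.

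To establish this, I would fix such an $\mathrm{x} \in \DD^{n}$. Because $\Lambda_0$ is a closed, hence compact, subset of $\SS^{n-1}$ and both $f$ and $d(\mathrm{x}, \cdot)$ are continuous, the supremum and infimum defining $f^{-}(\mathrm{x})$ and $f^{+}(\mathrm{x})$ are attained, say at $\mathrm{y}^{-}, \mathrm{y}^{+} \in \Lambda_0$. The equality $f^{-}(\mathrm{x}) = f^{+}(\mathrm{x})$ now reads $f(\mathrm{y}^{-}) - f(\mathrm{y}^{+}) = d(\mathrm{x}, \mathrm{y}^{-}) + d(\mathrm{x}, \mathrm{y}^{+})$; comparing with the chain $f(\mathrm{y}^{-}) - f(\mathrm{y}^{+}) \le d(\mathrm{y}^{-}, \mathrm{y}^{+}) \le d(\mathrm{x}, \mathrm{y}^{-}) + d(\mathrm{x}, \mathrm{y}^{+})$ from the previous step forces both inequalities to be equalities. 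Thus the Lipschitz bound is saturated, $f(\mathrm{y}^{-}) - f(\mathrm{y}^{+}) = d(\mathrm{y}^{-}, \mathrm{y}^{+})$, and $\mathrm{x}$ lies on a minimizing geodesic of $\SS^{n}$ joining $\mathrm{y}^{-}$ to $\mathrm{y}^{+}$.

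The heart of the matter is then a spherical observation, and it is precisely here that $\mathrm{x} \in \DD^{n}$ (the \emph{open} hemisphere) is used rather than $\overline{\DD}^{n}$. Viewing $\SS^{n} \subset \RR^{n+1}$ with equator $\SS^{n-1} = \{x_{n+1} = 0\}$ and open upper hemisphere $\DD^{n} = \{x_{n+1} > 0\}$, two distinct non-antipodal equatorial points $\mathrm{y}^{-}, \mathrm{y}^{+}$ span a $2$-plane inside $\{x_{n+1} = 0\}$, so their unique minimizing geodesic is a short great-circle arc contained in the equator and therefore disjoint from $\DD^{n}$; and if $\mathrm{y}^{-} = \mathrm{y}^{+}$ the geodesic condition forces $\mathrm{x} = \mathrm{y}^{-} \in \SS^{n-1}$, again impossible for $\mathrm{x} \in \DD^{n}$. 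Since our $\mathrm{x}$ lies in $\DD^{n}$ on a minimizing geodesic from $\mathrm{y}^{-}$ to $\mathrm{y}^{+}$, the only surviving possibility is $\mathrm{y}^{+} = (\mathrm{y}^{-})^{\ast}$. Writing $\mathrm{x}_0 := \mathrm{y}^{-}$ and $\mathrm{x}_0^{\ast} := \mathrm{y}^{+}$, the saturated Lipschitz equality becomes $f(\mathrm{x}_0) - f(\mathrm{x}_0^{\ast}) = d(\mathrm{x}_0, \mathrm{x}_0^{\ast}) = \pi$, which is exactly the condition that $\wt\Lambda$ be pure lightlike in the sense of Definition~\ref{def.purelightlike}. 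I expect this last geometric point --- that a minimizing geodesic between two equatorial points avoids the open hemisphere unless its endpoints are antipodal --- to be the only step requiring genuine insight; the rest is routine manipulation of the sup/inf and the triangle inequality.
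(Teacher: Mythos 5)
Your proof is correct and follows essentially the same route as the paper's: attain the sup and inf by compactness of $\Lambda_0$, recognize the equality case of the triangle inequality, and conclude that $\mathrm{y}^{-},\mathrm{y}^{+}$ must be antipodal because otherwise their unique minimizing geodesic lies in the equator and misses the open hemisphere containing $\mathrm{x}$. The extra details you supply (the a priori inequality $f^{-}\le f^{+}$ and the degenerate case $\mathrm{y}^{-}=\mathrm{y}^{+}$) are left implicit in the paper but are correct and harmless.
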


\begin{proof}
Assume $f^{+}(\mathrm{x})=f^{-}(\mathrm{x})$ for some $\mathrm{x}$ in $\DD^{n}$. Then, since
$\Lambda_{0}$ is compact, the upper and lower bounds are attained: there are 
$\mathrm{y}^{\pm}$ in $\Lambda_{0}$ such that:

\begin{eqnarray*}
f^{-}(\mathrm{x}) & = & f(\mathrm{y}^{-}) - d(\mathrm{x},\mathrm{y}^{-}) \\
f^{+}(\mathrm{x}) & = & f(\mathrm{y}^{+}) + d(\mathrm{x},\mathrm{y}^{+})
\end{eqnarray*}

Hence:

\[ d(\mathrm{y}^{-}, \mathrm{y}^{+}) \geq f(\mathrm{y}^{-}) - f(\mathrm{y}^{+}) = d(\mathrm{y}^{-}, \mathrm{x}) + d(\mathrm{x}, \mathrm{y}^{+}) \]

We are in the equality case of the triangular inequality. It follows that $\mathrm{x}$ belongs to a minimizing geodesic
in $\SS^{n}$ joining $\mathrm{y}^{-}$ to $\mathrm{y}^{+}$. It is possible only if $\mathrm{y}^{+}$, $\mathrm{y}^{-}$ are antipodal one
to the other, since if not the minimizing geodesic joining them is unique and contained in $\partial\DD^{n}$.
Moreover, $f(\mathrm{y}^{-})=f(\mathrm{y}^{+})+\pi$. The lemma follows.
\end{proof}

\begin{coro}
\label{c.inter-boundary}
For every achronal topological $(n-1)$-sphere
$\wt\Lambda\subset\partial\wt\AdS_{n+1}$,
\begin{enumerate}
\item $\wt E(\wt\Lambda)$ is disjoint from $\partial\wt\AdS_{n+1}$
  (\emph{ie.} it is contained in $\wt\AdS_{n+1}$);
\item $\Cl\left(\wt E(\wt\Lambda)\right)\cap\partial\wt\AdS_{n+1} =
  \wt\Lambda$, where $Cl\left(\wt E(\wt\Lambda)\right)$ denotes the closure 
  of $\wt E(\wt\Lambda)$ in $\uEin_{n+1}$.
\end{enumerate}
\end{coro}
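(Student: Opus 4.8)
The plan is to work entirely inside the explicit product model of Remark~\ref{r.f-f+}, in which $\wt\AdS_{n+1}\cup\partial\wt\AdS_{n+1}=\RR\times\overline{\DD}^{n}$, the Penrose boundary $\partial\wt\AdS_{n+1}$ is the set of points $(\theta,\mathrm{x})$ with $\mathrm{x}$ on the equatorial sphere $\SS^{n-1}=\partial\overline{\DD}^{n}$, and $\wt E(\wt\Lambda)=\{(\theta,\mathrm{x})\mid f^{-}(\mathrm{x})<\theta<f^{+}(\mathrm{x})\}$. Everything will follow from two elementary facts about the envelopes $f^{\pm}$. First, as suprema (resp.\ infima) over the compact set $\Lambda_{0}=\SS^{n-1}$ of the $1$-Lipschitz maps $\mathrm{x}\mapsto f(\mathrm{y})-d(\mathrm{x},\mathrm{y})$ (resp.\ $\mathrm{x}\mapsto f(\mathrm{y})+d(\mathrm{x},\mathrm{y})$), both $f^{-}$ and $f^{+}$ are again $1$-Lipschitz, hence continuous on all of $\overline{\DD}^{n}$ up to and including the boundary. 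Second, on the equator one has $f^{-}=f^{+}=f$: for $\mathrm{x}\in\SS^{n-1}$ the Lipschitz bounds $f(\mathrm{y})-d(\mathrm{x},\mathrm{y})\le f(\mathrm{x})\le f(\mathrm{y})+d(\mathrm{x},\mathrm{y})$ hold for every $\mathrm{y}\in\SS^{n-1}$, with equality attained at $\mathrm{y}=\mathrm{x}$.

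From these two facts both assertion~(1) and the inclusion $\Cl(\wt E(\wt\Lambda))\cap\partial\wt\AdS_{n+1}\subseteq\wt\Lambda$ are immediate. For~(1), if $\mathrm{x}\in\SS^{n-1}$ then $f^{-}(\mathrm{x})=f^{+}(\mathrm{x})$, so the open interval $(f^{-}(\mathrm{x}),f^{+}(\mathrm{x}))$ is empty and no point lying over the equator belongs to $\wt E(\wt\Lambda)$. For the inclusion I would take $(\theta_{0},\mathrm{x}_{0})\in\Cl(\wt E(\wt\Lambda))$ with $\mathrm{x}_{0}\in\SS^{n-1}$, together with a sequence $(\theta_{k},\mathrm{x}_{k})\in\wt E(\wt\Lambda)$ converging to it; passing to the limit in $f^{-}(\mathrm{x}_{k})<\theta_{k}<f^{+}(\mathrm{x}_{k})$ and invoking continuity of $f^{\pm}$ gives $f^{-}(\mathrm{x}_{0})\le\theta_{0}\le f^{+}(\mathrm{x}_{0})$, whence $\theta_{0}=f(\mathrm{x}_{0})$ and $(\theta_{0},\mathrm{x}_{0})\in\wt\Lambda$.

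The reverse inclusion $\wt\Lambda\subseteq\Cl(\wt E(\wt\Lambda))$ is the only point that needs more than continuity, and it is where the hypothesis enters: a pure lightlike sphere has $\wt E(\wt\Lambda)=\emptyset$, so I would assume $\wt\Lambda$ is not pure lightlike, which by Lemma~\ref{le.purevide} is equivalent to $f^{-}<f^{+}$ at every interior point of $\DD^{n}$. Given $(f(\mathrm{x}_{0}),\mathrm{x}_{0})\in\wt\Lambda$ with $\mathrm{x}_{0}\in\SS^{n-1}$, I would choose interior points $\mathrm{x}_{k}\in\DD^{n}$ with $\mathrm{x}_{k}\to\mathrm{x}_{0}$ and set $\theta_{k}=\frac{1}{2}\bigl(f^{-}(\mathrm{x}_{k})+f^{+}(\mathrm{x}_{k})\bigr)$. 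Strict positivity of $f^{+}-f^{-}$ on the interior guarantees $(\theta_{k},\mathrm{x}_{k})\in\wt E(\wt\Lambda)$, while continuity of $f^{\pm}$ together with $f^{\pm}(\mathrm{x}_{0})=f(\mathrm{x}_{0})$ forces $\theta_{k}\to f(\mathrm{x}_{0})$; hence $(\theta_{k},\mathrm{x}_{k})\to(f(\mathrm{x}_{0}),\mathrm{x}_{0})$, which therefore lies in $\Cl(\wt E(\wt\Lambda))$. I expect the main obstacle to be not any single computation but correctly isolating the role of the pure lightlike case through Lemma~\ref{le.purevide}: without excluding it the interior intervals may collapse, and then the boundary sphere cannot be approximated from inside.
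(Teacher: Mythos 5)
Your proof is correct and follows the same route as the paper: both work in the product model of Remark~\ref{r.f-f+} and reduce everything to the identity $f^{-}=f^{+}=f$ on $\SS^{n-1}=\partial\overline{\DD}^{n}$. The paper's own proof is essentially a one-liner, asserting that $\Cl(\wt E(\wt\Lambda))$ is $\{(\theta,\mathrm{x})\mid f^{-}(\mathrm{x})\leq\theta\leq f^{+}(\mathrm{x})\}$ and concluding; you go further and actually supply the inner-approximation argument needed for the inclusion $\wt\Lambda\subseteq\Cl(\wt E(\wt\Lambda))$, and in doing so you correctly isolate the one genuinely delicate point. Indeed, an achronal topological $(n-1)$-sphere \emph{can} be pure lightlike (take the graph over $\SS^{n-1}$ of $f(\mathrm{y})=-d(\mathrm{x}_{0},\mathrm{y})$, i.e.\ the trace of a lightcone); for such a sphere $\wt E(\wt\Lambda)=\emptyset$, so item~(2) as literally stated fails, and the paper's description of the closure is also false in that case. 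Your use of Lemma~\ref{le.purevide} to get $f^{-}<f^{+}$ on all of $\DD^{n}$ and then approach a boundary point of the lens along midpoints from the interior is exactly the content the paper elides. The only caveat is that you end up proving a (correct) amended statement --- item~(2) under the additional hypothesis that $\wt\Lambda$ is not pure lightlike --- rather than the corollary verbatim; that is a defect of the statement rather than of your argument, and it is harmless for the paper's purposes, since the corollary is only ever invoked for non-pure-lightlike spheres (e.g.\ in Proposition~\ref{pro.adsregular}).
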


\begin{proof}
We use the notations introduced in remark~\ref{r.f-f+}. 
Since $\wt\Lambda$ is a topological $(n-1)$-sphere, the set $\Lambda_0$
is the whole sphere $\SS^{n-1}$. For every
$\mathrm{x}\in\SS^{n-1}=\Lambda_0$, one has $f^{-}(\mathrm{x})=f^{+}(\mathrm{x})=f(\mathrm{x})$. Finally, recall
that $(\theta, \mathrm{x})\in\wt E(\wt\Lambda)$ (resp. $(\theta, \mathrm{x})\in\Cl(\wt
E(\wt\Lambda))$) if and only if $f^{-}(\mathrm{x})<\theta<f^{+}(\mathrm{x})$ (resp. $f^{-}(\mathrm{x})\leq
\theta\leq f^{+}(\mathrm{x})$). The corollary follows.
\end{proof}

\begin{remark}
It follows from item (2) of Corollary~\ref{c.inter-boundary} that the GH-regular domain 
$E(\Lambda)$ characterizes $\Lambda$, \ie invisible domains
of different achronal $(n-1)$-spheres are different. We call
$\Lambda$ the \textit{limit set of $E(\Lambda)$.}
\end{remark}

\subsection{AdS regular domains as subsets of $\ADS_{n+1}$}
The canonical homeomorphism between $\AdS_{n+1}\cup\partial\AdS_{n+1}$ and
$\ADS_{n+1}\cup\partial\ADS_{n+1}$ allows us to see AdS regular domains as
subsets of $\ADS_{n+1}$.

\begin{lem}
\label{l.affine-domain}
Let $\Lambda\subset\partial\AdS_{n+1}$ be the projection of a closed
achronal subset of $\partial\wt\AdS_{n+1}$ which is not pure lightlike. 
We see $\Lambda$ and $E(\Lambda)$ in $\ADS_{n+1}\cup\partial\ADS_{n+1}$. Then
$\Lambda$ and $E(\Lambda)$ are contained in the union $U\cup\partial
U$ of an affine domain and its affine boundary.
\end{lem}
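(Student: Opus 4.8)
The plan is to reduce the statement to the existence of a single timelike vector and then to locate that vector inside $E(\Lambda)$ itself. Everything is read in the Klein model. By Remark~\ref{rk.affine}, after normalising $\SS(x_0^{\orth})$ to be the projection of $\{u=0\}$, one has $U(x_0)=\{\,y\in\ADS_{n+1}\mid\langle x_0\mid y\rangle<0\,\}$ and, for its affine boundary, $\partial U(x_0)=\{\,y\in\partial\ADS_{n+1}\mid\langle x_0\mid y\rangle<0\,\}$; recall that $\langle x_0\mid y\rangle$ is defined only up to a positive scalar on $\SS(\RR^{2,n})$ but that its sign is unambiguous. Consequently, proving the lemma amounts to producing one $x_0\in\AdS_{n+1}$ with $\langle x_0\mid y\rangle<0$ for every $y\in\Lambda\cup E(\Lambda)$.

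I would obtain the inclusion for $\Lambda$ almost for free, by taking $x_0$ in $E(\Lambda)$. This set is nonempty precisely because $\wt\Lambda$ is not pure lightlike (Lemma~\ref{le.purevide}), and any of its points is a genuine element of $\AdS_{n+1}$, hence an admissible base point. Fix $x_0\in E(\Lambda)$, lift it to $\tilde x_0\in\wt E(\wt\Lambda)$, and let $p\in\Lambda$ with lift $\tilde p\in\wt\Lambda$. By definition of the invisible domain $\tilde x_0\notin J^-(\wt\Lambda)\cup J^+(\wt\Lambda)$, so $\tilde x_0$ and $\tilde p$ are not causally related in $\wt\AdS_{n+1}\cup\partial\wt\AdS_{n+1}$. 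Writing both points in the conformal coordinates $(\theta,\sigma)\in\RR\times\overline{\DD}^{n}$ and in the attached homogeneous coordinates $(\cos\theta,\sin\theta,\sigma)\in\RR^{2,n}$ that realise the isometry $\wt\AdS_{n+1}\cong\ADS_{n+1}$, non-causal relatedness reads $|\theta_{\tilde x_0}-\theta_{\tilde p}|<d(\sigma_{x_0},\sigma_p)$, and since $\sigma_p$ lies on the equatorial sphere one gets $\langle x_0\mid p\rangle=\cos d(\sigma_{x_0},\sigma_p)-\cos(\theta_{x_0}-\theta_p)<0$ (equivalently, one invokes Lemma~\ref{l.causally-related} together with Corollary~\ref{cor.list}). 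Thus $\Lambda\subset\partial U(x_0)$.

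The remaining inclusion $E(\Lambda)\subset U(x_0)$ I would deduce from a connectedness argument. The hyperplane $\SS(x_0^{\orth})$ cuts $\ADS_{n+1}\cup\partial\ADS_{n+1}$ into two affine charts, and $x_0\in U(x_0)$ while, by the previous step, $\Lambda\subset\partial U(x_0)$, so both sit in the chart $V(x_0)$. Moreover $\Cl(E(\Lambda))$ is connected: by Remark~\ref{r.f-f+} it is the image of $\{\,(\theta,\sigma)\mid f^{-}(\sigma)\le\theta\le f^{+}(\sigma)\,\}$ over the connected base $\overline{\DD}^{n}$, with nonempty fibres over the interior (Lemma~\ref{le.purevide}). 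Hence it would suffice to check that $\Cl(E(\Lambda))$ does not meet $\SS(x_0^{\orth})$: a connected set lying off the separating hyperplane and containing $x_0\in V(x_0)$ is entirely contained in $V(x_0)$, i.e.\ in $U(x_0)\cup\partial U(x_0)$.

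The crux, and the step I expect to be hardest, is exactly this disjointness $\Cl(E(\Lambda))\cap\SS(x_0^{\orth})=\varnothing$. On the conformal boundary it says that the boundary-at-infinity of $E(\Lambda)$, which one expects to be $\Lambda$ (the analogue of Corollary~\ref{c.inter-boundary}), avoids the de~Sitter equator $\{\langle x_0\mid\cdot\rangle=0\}$; in the interior it says that no point of $E(\Lambda)$ lands on the two totally geodesic copies of $\HH^{n}$ forming $\SS(x_0^{\orth})\cap\ADS_{n+1}$. Here the hypothesis that $\wt\Lambda$ is not pure lightlike must be used in earnest, and the fibre-width estimate in the proof of Lemma~\ref{le.one-to-one} (each vertical fibre of $\wt E(\wt\Lambda)$ has $\theta$-extent at most $\pi$) is the natural lever: a point of $E(\Lambda)$ on $\SS(x_0^{\orth})$ would be conjugate to $x_0$ along a timelike geodesic of length $\pi$, so its homogeneous coordinate would be opposite to that of a point realising the extremal width, which a strict width bound should rule out. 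The delicate issue I anticipate is that a symmetric or ``central'' choice of $x_0$ need not realise a strict bound, so $x_0\in E(\Lambda)$ must be selected with care (for instance so that, in addition, $0$ is separated from the homogeneous representatives of $\Lambda$, which by Corollary~\ref{cor.list} is possible exactly when $\wt\Lambda$ is not pure lightlike); the causal convexity of $\wt E(\wt\Lambda)$ should then confine the whole invisible domain to the width-$\pi$ chart $U(x_0)$.
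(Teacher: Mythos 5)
The paper itself gives no argument here (it simply cites \cite[Lemma~8.27]{barbtz1}), so the only question is whether your proposal stands on its own; it does not. The first half is sound: for any $x_0\in E(\Lambda)\cap\AdS_{n+1}$ (you must insist on an interior point --- for a general achronal $\wt\Lambda$ the invisible domain does meet $\partial\AdS_{n+1}$, so ``any of its points is a genuine element of $\AdS_{n+1}$'' is not quite right), the computation $\langle x_0\mid p\rangle=r_0\bigl(\cos d(\sigma_{x_0},\sigma_p)-\cos(\theta_{x_0}-\theta_p)\bigr)<0$ for $p\in\Lambda$ is correct, and the reduction of the inclusion $E(\Lambda)\subset U(x_0)\cup\partial U(x_0)$ to the disjointness $E(\Lambda)\cap\SS(x_0^{\orth})=\emptyset$ via connectedness is also correct. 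But that disjointness, which you yourself identify as the crux, is never proved, and it is in fact \emph{false} for an arbitrary $x_0\in E(\Lambda)$: choosing the base point is not a technical refinement but the entire content of the lemma.

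Concretely, take $n=2$ and $\wt\Lambda=\{0\}\times\SS^{1}$, so that $\wt E(\wt\Lambda)=\{(\theta,\sigma)\mid |\theta|<d(\sigma,\SS^{1})\}$. For two points of $\AdS_{n+1}$ one has $\langle x_0\mid y\rangle=r_0r_1\bigl(\cos d(\sigma_0,\sigma_1)-\cos\Delta\theta\bigr)-1$; note the extra $-1$ compared with the boundary formula, which is exactly why the argument that works for $\Lambda$ does not transfer to $E(\Lambda)$. Taking $\sigma_0,\sigma_1$ at distance $\eps$ from the north pole on opposite sides of it, and $\theta_0=-\theta_1=\pi/2-2\eps$, both points lie in $\wt E(\wt\Lambda)$, yet $\langle x_0\mid y\rangle=(\cos 2\eps+\cos 4\eps)/\cos^2\eps-1\to 1>0$ as $\eps\to 0$, so $y\notin U(x_0)\cup\partial U(x_0)$. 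Your heuristic for the disjointness is also off: $\SS(x_0^{\orth})\cap\ADS_{n+1}$ consists of points at timelike distance $\pi/2$ (not $\pi$) from $x_0$ or its antipode, so the width-$\pi$ estimate from Lemma~\ref{le.one-to-one} does not prevent a fibre of $\wt E(\wt\Lambda)$ from crossing the dual hyperplane of $x_0$, as the example shows. A correct proof must manufacture a special $x_0$ --- roughly, one dual to a supporting hyperplane of the convex cone generated by lifts of $\Lambda$ in $\cC_n$, where achronality enters through Corollary~\ref{cor.list} and the non-pure-lightlike hypothesis is what makes the relevant inequalities strict --- and that construction is precisely what your proposal leaves open.
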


\begin{proof} 
See \cite[Lemma~8.27]{barbtz1}. 
\end{proof}

Lemma~\ref{l.affine-domain} implies, in particular, that every AdS
regular domain is contained in an affine domain of $\ADS_{n+1}$. This
allows to visualize AdS regular domains as subsets of 
$\RR^{n+1}$ (see remark~\ref{rk.affine}).

Putting together the definition of the invisible domain $E(\Lambda)$
of a set $\Lambda\subset\partial\AdS_{n+1}$ and
Lemma~\ref{l.causally-related}, one gets:

\begin{prop}
\label{pro.proj}
Let $\Lambda\subset\partial\AdS_{n+1}$ be the projection of a closed
achronal subset of $\partial\wt\AdS_{n+1}$ which is not pure lightlike.
If we see $\Lambda$ and $E(\Lambda)$ in the Klein
model $\ADS_{n+1}\cup\partial\ADS_{n+1}$, then
$$
E(\Lambda)=\{y \in \ADS_{n+1}\cup\partial\ADS_{n+1}\mbox{ such that }\langle
y  \mid x \rangle < 0 \mbox{ for every }x\in\Lambda\}).
$$
\end{prop}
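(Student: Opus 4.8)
The plan is to combine the defining property of the invisible domain (absence of causal relation to $\Lambda$) with the sign criterion of Lemma~\ref{l.causally-related}, after confining everything to a single affine chart. By Lemma~\ref{l.affine-domain}, since $\wt\Lambda$ is not pure lightlike there is an affine domain $U$ with $\Lambda\subset\partial U$ and $E(\Lambda)\subset U\cup\partial U$; I write $z\in\AdS_{n+1}$ for its centre, so that $U=U(z)$ and $V(z)=\{\langle\,\cdot\mid z\rangle<0\}$ in the notation of Remark~\ref{rk.affine}. The strategy is to show that both sides of the claimed identity lie in $U\cup\partial U$, and then to identify them there using Lemma~\ref{l.causally-related}.

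First I would dispose of the containment of the right-hand side into the chart. The centre $z$ of $U$ may be taken to be a positive combination $z=\sum_i c_i x_i$ of finitely many points $x_i\in\Lambda$ (this is the geometric content of the construction behind Lemma~\ref{l.affine-domain}); moreover $\mathrm{q}_{2,n}(z)<0$, since $\Lambda$ is achronal and not pure lightlike so that at least two of the chosen points have strictly negative mutual product. Then, for any $y$ with $\langle y\mid x\rangle<0$ for every $x\in\Lambda$, one has $\langle y\mid z\rangle=\sum_i c_i\langle y\mid x_i\rangle<0$, whence $y\in V(z)$. Combined with $\mathrm{q}_{2,n}(y)\le 0$ (automatic since $y\in\ADS_{n+1}\cup\partial\ADS_{n+1}$), Remark~\ref{rk.affine} places $y$ in $U\cup\partial U$. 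Thus the purely algebraic right-hand side cannot escape the chart, and it suffices to compare the two sets inside $U\cup\partial U$.

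The core step is a translation of causality. By Lemma~\ref{le.one-to-one} the projection $\wt E(\wt\Lambda)\to E(\Lambda)$ is a bijection, and by construction $\wt E(\wt\Lambda)$ is exactly the set of points of $\wt\AdS_{n+1}\cup\partial\wt\AdS_{n+1}$ joined to no point of $\wt\Lambda$ by a causal curve. Because $U\cup\partial U$ lifts to a $\theta$-slab of width at most $\pi$ (the region $E$ in the proof of Lemma~\ref{le.one-to-one}), hence less than the period $2\pi$, a causal curve upstairs joining a lift of $y\in U\cup\partial U$ to $\wt\Lambda$ exists if and only if a causal curve of $U\cup\partial U$ joins $y$ to $\Lambda$. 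Granting this, $y\in E(\Lambda)$ means precisely that no causal curve of $U\cup\partial U$ joins $y$ to any $x\in\Lambda\subset\partial U$, which by Lemma~\ref{l.causally-related} is equivalent to $\langle y\mid x\rangle<0$ for every $x\in\Lambda$. This is exactly the asserted identity.

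The main obstacle is this causality translation. One direction requires that a causal curve running from $\Lambda$ to a point of the chart can always be realised \emph{inside} $U\cup\partial U$, a confinement statement that is the crux and must be extracted from the geometry of the affine domain; the other direction requires that, when a chart causal curve is lifted, the $\theta$-slab bound prevents it from winding and identifying $\wt\Lambda$ with a neighbouring lift $\delta^{k}\wt\Lambda$. The width estimate $f^{+}-f^{-}\le\pi$ is what rules out such winding, and the containment step of the second paragraph is what guarantees that Lemma~\ref{l.causally-related} is applicable to every point of the right-hand side. Everything remaining is a direct substitution.
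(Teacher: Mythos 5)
Your route is the one the paper gestures at (the paper offers no real proof, only the remark that the statement follows from the definition of the invisible domain together with Lemma~\ref{l.causally-related}), and your treatment of the inclusion $E(\Lambda)\subseteq\{y\mid \langle y\mid x\rangle<0\ \forall x\in\Lambda\}$ is essentially sound. The genuine gap is in the reverse inclusion, at the point where you confine the right-hand side to the chart. You claim the centre $z$ of the affine domain of Lemma~\ref{l.affine-domain} may be taken to be a positive combination $z=\sum_i c_i x_i$ of points of $\Lambda$ with $\mathrm{q}_{2,n}(z)<0$, ``since at least two of the chosen points have strictly negative mutual product''. That clause is false in the stated generality: an achronal set can fail to be acausal without being pure lightlike. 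Take $\wt\Lambda$ to consist of two points $(0,\mathrm{y}_1)$ and $(d,\mathrm{y}_2)$ with $d=d(\mathrm{y}_1,\mathrm{y}_2)<\pi$: this is closed, achronal, not pure lightlike, yet $\langle x_1\mid x_2\rangle=0$, so every positive combination is isotropic and no such $z$ exists. Even when spacelike-separated pairs do exist, $z=c_1x_1+c_2x_2$ need not satisfy $\langle z\mid x\rangle<0$ for \emph{every} $x\in\Lambda$ (which you need for $\Lambda\subset\partial U(z)$), and Lemma~\ref{l.affine-domain} only guarantees \emph{some} affine domain -- you are importing unproved structure from the proof of a lemma the paper cites as a black box. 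Since the entire content of the inclusion $\{\langle\cdot\mid x\rangle<0\ \forall x\}\subseteq E(\Lambda)$ is to exclude points that might sit outside the chart, this step cannot be waved through.

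The inequality that actually closes the gap is not the slab width $f^{+}-f^{-}\le\pi$ but a spherical perimeter bound, and it lets you bypass the chart entirely for this direction. In conformal coordinates, $\langle y\mid x\rangle<0$ for $y=(\theta,\mathrm{x})$ and $x=(f(\mathrm{y}),\mathrm{y})\in\Lambda$ reads $d_{\SS^1}(\theta,f(\mathrm{y}))<d(\mathrm{x},\mathrm{y})$, i.e.\ \emph{some} lift $\theta+2\pi k(\mathrm{y})$ lies in $(f(\mathrm{y})-d(\mathrm{x},\mathrm{y}),\,f(\mathrm{y})+d(\mathrm{x},\mathrm{y}))$, whereas $y\in E(\Lambda)$ demands a lift independent of $\mathrm{y}$ (Remark~\ref{r.f-f+}). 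If $k(\mathrm{y}_1)=0$ and $k(\mathrm{y}_2)=1$ for two points of $\Lambda_0$, one gets $f(\mathrm{y}_1)-f(\mathrm{y}_2)<d(\mathrm{x},\mathrm{y}_1)+d(\mathrm{x},\mathrm{y}_2)-2\pi\le -d(\mathrm{y}_1,\mathrm{y}_2)$, the last step being $d(\mathrm{x},\mathrm{y}_1)+d(\mathrm{x},\mathrm{y}_2)+d(\mathrm{y}_1,\mathrm{y}_2)\le 2\pi$ (pass through the antipode of $\mathrm{x}$); this contradicts the $1$-Lipschitz property of $f$, so a common lift exists. Finally, the ``confinement'' of causal curves to $U\cup\partial U$, which you correctly identify as the crux of the other direction but do not prove, does hold: two causally related points of a $\theta$-slab of width $\pi$ are joined by a causal curve with monotone $\theta$, which stays in the slab; you should say this rather than defer it to unspecified ``geometry of the affine domain''.
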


\begin{remark}
\label{rk.nice}
A nice (and important) corollary of this Proposition is that the invisible
domain $E(\Lambda)$ associated with a set $\Lambda$ is
always geodesically convex: any geodesic joining two points in
$E(\Lambda)$ is contained in $E(\Lambda)$.  
\end{remark}

\section{Globally hyperbolic AdS spacetimes}

\subsection{Cosmological time functions}
\label{s.cosmological-time}

In any spacetime $(M,g)$, one can define the \textit{cosmological time
  function\/} as follows (see \cite{cosmic}):

\begin{defi}
The cosmological time function of a spacetime $(M,g)$ is the function
$\tau:M\rightarrow [0,+\infty]$ defined by
$$\tau(x):=\mbox{Sup}\{ L(c) \mid c \in {\mathcal R}^-(x) \},$$  where
${\mathcal R}^-(x)$ is the set of past-oriented  causal curves starting at
$x$, and $L(c)$ is  the Lorentzian length of the causal curve $c$.
\end{defi}

This function  is in general very badly behaved. For example,  in the
case of Minkowski space, the cosmological time function is everywhere
infinite. 

\begin{defi}
\label{d.regular}
A spacetime $(M,g)$ is \textit{CT-regular} with cosmological time
function $\tau$ if
\begin{enumerate}
\item $M$ has \textit{finite existence time,\/}  $\tau(x) < \infty$ for
   every $x$ in $M$,
\item for every past-oriented inextendible causal curve $c: [0, +\infty)
  \rightarrow M$, $\lim_{t \to \infty} \tau(c(t))=0$.
\end{enumerate}
\end{defi}

\begin{teo}[\cite{cosmic}]
\label{teo.cosmogood}
CT-regular spacetimes are globally hyperbolic.
\end{teo}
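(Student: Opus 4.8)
The plan is to deduce global hyperbolicity from its classical characterization: a strongly causal spacetime all of whose causal diamonds $J^{+}(p)\cap J^{-}(q)$ are compact admits a Cauchy hypersurface (this is the definition adopted in \S~\ref{sec.basic}). Accordingly I would split the argument into two independent halves --- \emph{strong causality}, extracted from condition (1) of Definition~\ref{d.regular}, and \emph{compactness of diamonds}, extracted from conditions (1) and (2) together --- and then invoke the classical (Leray--Geroch) criterion to produce the Cauchy surface.

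First I would record the elementary properties of $\tau$. Since $M$ is a genuine manifold, through every point pass short past-directed timelike curves of positive Lorentzian length, so $\tau>0$ everywhere; condition (1) gives $\tau<\infty$, hence $0<\tau<\infty$ on all of $M$. The decisive structural feature is super-additivity along the causal relation: if $q\in J^{+}(p)$, then prepending a causal curve from $p$ to $q$ to a near-maximal element of $\mathcal{R}^-(p)$ shows
\[
\tau(q)\ \geq\ \tau(p)+d(p,q),
\]
where $d(p,q)$ denotes the Lorentzian distance (the supremum of lengths $L(c)$ of future causal curves from $p$ to $q$). In particular $\tau$ is non-decreasing along future causal curves and strictly increasing along future timelike ones. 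Consequently, for $x\in K:=J^{+}(p)\cap J^{-}(q)$ one has $\tau(p)\leq\tau(x)\leq\tau(q)$, so $\tau$ is confined on $K$ to the interval $[\tau(p),\tau(q)]\subset(0,\infty)$.

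For strong causality, condition (1) does the work. A closed timelike curve through $x$ could be traversed arbitrarily many times and spliced into a past curve, forcing $\tau(x)=\infty$; and a failure of strong causality at $x$ produces, through the standard limit-curve/imprisonment machinery, an inextendible causal curve partially imprisoned in a compact neighbourhood of $x$ together with a sequence of almost-closed causal loops. These loops again manufacture past-directed causal curves of unbounded length ending at nearby points, contradicting the finiteness $\tau<\infty$. Hence $(M,g)$ is strongly causal.

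The hard part will be the compactness of the diamonds $K=J^{+}(p)\cap J^{-}(q)$, and this is precisely where condition (2) enters. Given a sequence of causal curves lying in $K$, the limit-curve theorem (available now that strong causality is known) yields a limiting causal curve; the real issue is to keep its endpoints at $p$ and $q$ and to prevent it from escaping $K$ or becoming inextendible. On one side, the bound $\tau\leq\tau(q)<\infty$ on $K$ caps things to the future; on the other side, condition (2) asserts that $\tau\to 0$ along \emph{every} past-inextendible causal curve, whereas every point of $K$ has $\tau\geq\tau(p)>0$. Thus no causal curve meeting $K$ can be past-inextendible while remaining in $K$, and a symmetric consideration of the time-reversed situation controls the future end. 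Combined with strong causality, which forbids imprisonment in a compact set, this traps the limiting curve in a compact region with the correct endpoints, yielding the compactness of $K$. Once strong causality and compactness of all causal diamonds are established, the classical criterion guarantees a Cauchy hypersurface, i.e. $(M,g)$ is globally hyperbolic; I would further remark that the super-additivity above makes $\tau$ lower semicontinuous, and that conditions (1)--(2) upgrade this to a continuous time function strictly increasing along timelike curves, which gives the expected foliation picture (\cf \cite{cosmic}).
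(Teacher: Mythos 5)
The paper offers no proof of this theorem: it is imported verbatim from \cite{cosmic} (Andersson--Galloway--Howard), so your attempt has to be judged on its own. Your skeleton (strong causality plus compactness of the diamonds $J^+(p)\cap J^-(q)$) and your key inequality $\tau(q)\geq\tau(p)+d(p,q)$ are both the right starting point, and they are indeed the starting point in \cite{cosmic}. But each of the two substantive steps has a genuine gap.

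For strong causality, condition (1) of Definition~\ref{d.regular} does \emph{not} ``do the work'' in the way you claim. The almost-closed causal curves produced by a failure of strong causality can be arbitrarily close to null, hence of arbitrarily small (even zero) Lorentzian length, and there is no reason the endpoint of one loop causally precedes the starting point of the next, so they cannot be concatenated into a past-directed causal curve of unbounded length; the contradiction with $\tau<\infty$ evaporates. Indeed condition (1) alone does not even rule out a closed \emph{null} curve, whose infinite traversal has zero length (one needs condition (2) for that). The route actually taken in \cite{cosmic} is to first prove that $\tau$ is a genuine time function --- continuous and strictly increasing along \emph{all} future causal curves, including null ones --- which in turn rests on the existence of past-directed maximizing geodesics realizing $\tau(q)$; this is the real technical content, and you have relegated it to an unproved closing remark while implicitly relying on it upstream.

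For compactness of the diamonds, the phrase ``a symmetric consideration of the time-reversed situation controls the future end'' is where the argument breaks: CT-regularity is not a time-symmetric hypothesis, since condition (2) constrains only \emph{past}-inextendible curves. A future-inextendible causal curve along which $\tau$ stays bounded is entirely possible (the AdS regular domains of this very paper, where $\tau$ is globally bounded, furnish examples), so the escape of a limit curve to the future cannot be excluded by reversing time; in \cite{cosmic} the contradiction is always extracted from the past-inextendible half of the limit-curve pair via condition (2). Even on the past side there is a subtlety you pass over: the points of the limit curve lie a priori only in $\overline{J^+(p)}$, and the lower semicontinuity of $\tau$ points the wrong way to conclude $\tau\geq\tau(p)$ there, so an extra approximation through open chronological futures (or the continuity of $\tau$, again) is required before condition (2) yields the contradiction.
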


A very nice feature of CT-regularity is that is is preserved by isometries (and thus, by Galois automorphisms):

\begin{prop}
\label{pro.cosmolift}
Let  $(\wt{M}, \tilde{g})$ be a CT-regular spacetime. Let $\Gamma$ be a discrete group of isometries of
$(\wt{M},\tilde{g})$ preserving the time orientation and without fixed points. Then, the action of $\Gamma$ on  $(\wt{M}, \tilde{g})$
is properly discontinuous. Futhermore, the quotient spacetime $(M,g)$ is CT-regular. More precisely,
if $\mathrm{p}: \wt{M} \to M$ denote the quotient map, the cosmological times $\tilde{\tau}: \wt{M} \to [0, +\infty)$
and $\tau: M \to [0, +\infty)$ satisfy:
\[
\tilde{\tau}=\tau \circ \mathrm{p} \]
\end{prop}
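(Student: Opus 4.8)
The plan is to split the statement into three essentially independent parts: the $\Gamma$-invariance of $\tilde\tau$, the proper discontinuity of the action (the crux), and the transfer of the cosmological time and of CT-regularity to the quotient once $\mathrm{p}$ is known to be a covering local isometry. First I would check invariance: each $\gamma\in\Gamma$ is an isometry preserving the time orientation, hence maps $\mathcal{R}^-(x)$ bijectively onto $\mathcal{R}^-(\gamma x)$ while preserving the Lorentzian length $L$; taking suprema yields $\tilde\tau\circ\gamma=\tilde\tau$, so $\Gamma$ preserves every level set $\tilde\tau^{-1}(a)$.

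The main step is proper discontinuity, and here the essential difficulty must be confronted: unlike in the Riemannian case, a discrete group of Lorentzian isometries need not act properly on $\wt M$, precisely because the structure group $\mathrm{O}(1,n)$ is non-compact (the isometry group acts properly on the orthonormal frame bundle $\op{O}(\wt M)$, but not on $\wt M$). The invariance of $\tilde\tau$ is exactly what restores properness. Since $\tilde\tau$ is a $\Gamma$-invariant time function on the globally hyperbolic space $\wt M$ (Theorem~\ref{teo.cosmogood}), it determines a $\Gamma$-invariant, unit, past-directed timelike direction $T$, namely the direction of the maximal past geodesics realizing $\tilde\tau$ (\cf \cite{cosmic}). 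I would then pass to the reduction $\op{O}_T(\wt M)\subset\op{O}(\wt M)$ of orthonormal frames whose timelike leg points along $T$: this is a closed, $\Gamma$-invariant subbundle with \emph{compact} structure group $\mathrm{O}(n)$. As $\op{Isom}(\wt M,\tilde g)$ acts properly on $\op{O}(\wt M)$, the discrete group $\Gamma$ acts properly on the invariant subset $\op{O}_T(\wt M)$; and since $\op{O}_T(\wt M)\to\wt M$ is proper with compact fibers, properness descends to $\wt M$. Together with freeness this gives proper discontinuity. Equivalently, in a sequential formulation: if $\gamma_n$ were distinct with $x_n\to x$ and $\gamma_n x_n\to z$, the isometries $d\gamma_n$ would carry $T(x_n)\to T(x)$ to $T(\gamma_n x_n)\to T(z)$; because the stabilizer of a unit timelike vector in $\mathrm{O}(1,n)$ is the compact group $\mathrm{O}(n)$, the $d\gamma_n$ would remain in a compact set, forcing $\gamma_n$ to subconverge in the Lie group $\op{Isom}(\wt M,\tilde g)$, against discreteness.

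Once proper discontinuity and freeness are in hand, $\mathrm{p}\colon\wt M\to M$ is a covering map, the metric and time orientation descend, and $\mathrm{p}$ is a time-orientation preserving local isometry. The identity $\tilde\tau=\tau\circ\mathrm{p}$ then follows from two lifting inequalities: projecting curves sends $\mathcal{R}^-(x)$ into $\mathcal{R}^-(\mathrm{p}(x))$ preserving $L$, giving $\tilde\tau(x)\le\tau(\mathrm{p}(x))$, while lifting curves through the covering sends $\mathcal{R}^-(\mathrm{p}(x))$ into $\mathcal{R}^-(x)$ preserving $L$, giving the reverse inequality. Finite existence time of $M$ is then immediate from that of $\wt M$; and for a past-oriented inextendible causal curve $c\colon[0,+\infty)\to M$, its lift $\tilde c$ is again past-oriented, causal and inextendible (any accumulation point of $\tilde c$ would project to one of $c$), so $\lim_{t\to\infty}\tau(c(t))=\lim_{t\to\infty}\tilde\tau(\tilde c(t))=0$ by CT-regularity of $\wt M$. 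Hence $(M,g)$ is CT-regular.

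I expect the hard part to be the construction of the invariant timelike direction $T$: the cosmological time is only locally Lipschitz, so $\nabla\tilde\tau$ exists merely almost everywhere and $T$ is not literally a continuous vector field. I would resolve this with the structure theory of \cite{cosmic}: through each point there is a maximal past geodesic realizing $\tilde\tau$, and the (nonempty, compact) set of admissible initial velocities depends upper-semicontinuously on the point. This semicontinuity is exactly what guarantees that the adapted frame subbundle has compact fibers over compact sets, and so is what makes the descent of properness from $\op{O}_T(\wt M)$ to $\wt M$ legitimate despite the lack of smoothness of $\tilde\tau$.
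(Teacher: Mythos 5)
Your proof is correct in outline, but for the crucial step --- proper discontinuity --- it takes a genuinely different route from the paper. The paper exploits the $\Gamma$-invariance of $\tilde\tau$ by restricting the action to a \emph{level set} of the cosmological time: such a level set is a (locally compact, connected) metric space for the induced length structure, a discrete group of isometries of such a space acts properly discontinuously (van Dantzig--van der Waerden), and properness then propagates to all of $\wt M$. You instead stay on $\wt M$ and kill the non-compactness of $\mathrm{O}(1,n)$ by producing a $\Gamma$-invariant field of past-directed timelike generators of $\tilde\tau$ and reducing the orthonormal frame bundle to a closed invariant subset with compact fibers, on which the classical properness of $\op{Isom}(\wt M)$ applies. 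Both devices are two faces of the same idea (an invariant time function tames the Lorentz factor), but they lean on different inputs: the paper's version needs only that the level sets are genuine metric spaces and that the restriction of $\Gamma$ to a level set remains discrete (the ``it follows quite easily'' step, which implicitly uses the retraction of $\wt M$ onto a level set along the generators); yours needs the finer structure theory of \cite{cosmic} --- existence of maximizing past geodesics, compactness and upper semicontinuity of their initial velocities --- which you correctly identify as the delicate point, since $\tilde\tau$ is only locally Lipschitz and the generator field is a priori multivalued. Your treatment of the identity $\tilde\tau=\tau\circ\mathrm{p}$ and of the descent of CT-regularity via lifting and projecting inextendible causal curves matches the paper's argument. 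One small point worth making explicit in your frame-bundle argument: discreteness of $\Gamma$ in the Lie group $\op{Isom}(\wt M,\tilde g)$ implies it is closed there, which is what turns subconvergence of the $\gamma_n$ into eventual constancy and hence the desired contradiction.
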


\begin{proof}[Sketch of proof]
$\Gamma$ clearly preserves the cosmological time and its level sets.
These level sets are metric spaces on which $\Gamma$ acts isometrically, and hence,
properly discontinuously. It follows quite easily that $\Gamma$ acts properly discontinuously
on the entire $\wt{M}$. 

The proof of the identity $\tilde{\tau}=\tau \circ \mathrm{p}$ is straightforward: it follows
from the $\Gamma$-invariance of $\tilde{\tau}$ and the fact that 
inextendible causal curves in $M$ are precisely the projections by $\mathrm{p}$ of inextendible causal
curves in $\wt{M}$.
\end{proof}

\subsection{GH-regular AdS spacetimes are CT-regular}

Let $\Lambda$ be a non-pure lightlike topological achronal $(n-1)$-sphere in $\partial\AdS_{n+1}$.

\begin{prop}
\label{pro.adsregular}
The AdS regular domain $E(\Lambda)$ is CT-regular.
\end{prop}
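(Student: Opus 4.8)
The plan is to work entirely in the Klein model and to reduce the computation of the cosmological time to an elementary analysis of past timelike geodesics via the bilinear form $\mathrm{q}_{2,n}$. By Lemma~\ref{l.affine-domain} and Proposition~\ref{pro.proj} I may regard $E(\Lambda)$ as the set
$$E(\Lambda)=\{\,y \mid \sca{y}{x}<0 \text{ for all } x\in\Lambda\,\}$$
sitting inside a single affine domain $U$, which in the coordinates of Remark~\ref{rk.affine} is the convex body $\{-t^2+\bar x_1^2+\dots+\bar x_n^2<1\}$ of $\RR^{n+1}$. Since geodesics of $\ADS_{n+1}$ are straight lines and $U$ is convex in these coordinates, $U$ (and a fortiori $E(\Lambda)$, \cf Remark~\ref{rk.nice}) is geodesically convex; hence any two causally related points of $E(\Lambda)$ are joined inside $E(\Lambda)$ by a geodesic segment maximizing Lorentzian length among causal curves with the same endpoints. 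This is the structural fact I will use throughout.

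First I would dispose of the finite-existence-time condition. A one-line computation with the parametrization $\gamma(s)=\cos(s)\,y+\sin(s)\,w$ of a unit timelike geodesic (where $\mathrm{q}_{2,n}(y)=\mathrm{q}_{2,n}(w)=-1$ and $\sca{y}{w}=0$) shows that every timelike geodesic segment contained in $U$ meets the affine boundary $\partial U$, and has proper length less than $\pi$; hence the timelike diameter of $U$ is at most $\pi$. Since the length of any causal curve is bounded by the Lorentzian distance between its endpoints, and maximizing geodesics realize this distance by convexity, every past causal curve issued from $y\in E(\Lambda)$ has length at most $\pi$. Therefore $\tau(y)\le\pi<\infty$ for every $y$, which settles condition (1) of CT-regularity.

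For the second condition I would first describe $\tau$ geometrically. By the convexity above, $\tau(y)$ is the supremum, over past-directed unit timelike directions $w$ at $y$, of the first proper time $s^{*}(w)$ at which $\gamma$ leaves $E(\Lambda)$, \ie at which $\sca{\gamma(s)}{x}$ first vanishes for some $x\in\Lambda$. Writing $a=-\sca{y}{x}>0$ and $b=\sca{w}{x}$, one has $\sca{\gamma(s)}{x}=-a\cos(s)+b\sin(s)$, which vanishes at $s=\arctan(a/b)$ when $b>0$, so
$$\tau(y)=\sup_{w}\ \inf_{\{x\in\Lambda\,:\,\sca{w}{x}>0\}}\arctan\!\Big(\frac{-\sca{y}{x}}{\sca{w}{x}}\Big).$$
Because $\Lambda$ is a topological $(n-1)$-sphere it separates the past and future cones at $y$, so the indexing set is nonempty and the infimum is attained on the compact set $\Lambda$; this both re-proves $\tau(y)\le\pi/2$ and exhibits $\tau(y)$ as a quantity governed by how close $y$ is to the lightcones $\{\sca{\cdot}{x}=0\}$, $x\in\Lambda$.

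Finally I would run the limit argument. Let $c:[0,\infty)\to E(\Lambda)$ be past-inextendible and causal. As AdS regular domains are strongly causal, $c$ leaves every compact subset of $E(\Lambda)$, hence accumulates on $\partial E(\Lambda)$; being past-directed and confined to the invisible domain (disjoint from $J^{-}(\Lambda)\cup J^{+}(\Lambda)$), it must accumulate on the past boundary $\partial^{-}E(\Lambda)$, which by Proposition~\ref{pro.proj} and Corollary~\ref{c.inter-boundary}(1) consists of points $p\in\AdS_{n+1}$ with $\sca{p}{x_0}=0$ for some $x_0\in\Lambda$. The remaining — and hardest — step is then the uniform estimate $\tau(y)\to 0$ as $y\to\partial^{-}E(\Lambda)$. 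From the displayed formula, $a_0:=-\sca{y}{x_0}\to 0$ as $y\to p$, and the genuine difficulty is that some past directions $w$ are almost tangent to the lightlike generator of $\partial^{-}E(\Lambda)$ through $p$, making $\sca{w}{x_0}$ small so that $\arctan(a_0/\sca{w}{x_0})$ need not be small. The mechanism is that such a $w$ then points toward a \emph{nearby} boundary point $x$, and compactness of $\Lambda$ together with continuity of $x\mapsto(\sca{y}{x},\sca{w}{x})$ forces $\gamma$ to cross some neighboring lightcone after a proper time tending to $0$ uniformly in $w$. This is exactly where the sphere hypothesis and the non-pure-lightlike assumption (Lemma~\ref{le.purevide}, guaranteeing that $E(\Lambda)$ is a genuine open domain and that its past boundary is approached transversally) are indispensable; it is the only delicate part of the proof, and once it is established condition (2) follows, so that $E(\Lambda)$ is CT-regular.
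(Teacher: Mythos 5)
Your argument rests on a step that is not available and is in fact false where you invoke it: the claim that, ``by convexity,'' the length of a causal curve between two points of $U$ (or of $E(\Lambda)$) is bounded by the length of the connecting geodesic. Remark~\ref{rk.nice} gives only the \emph{existence} of a connecting geodesic inside $E(\Lambda)$; it does not make that geodesic maximizing. For the affine domain $U$ the maximizing property fails outright: in the conformal model $U=(\theta_0-\pi/2,\theta_0+\pi/2)\times\DD^{n}$ with metric $r^{2}(-d\theta^{2}+ds^{2})$, the vertical timelike curve $\theta\mapsto(\theta,\mathrm{x}_1)$ has length close to $\pi\,r(\mathrm{x}_1)$, which is unbounded as $\mathrm{x}_1\to\partial\DD^{n}$; so the timelike diameter of $U$ is infinite even though every timelike geodesic segment in $U$ has length at most $\pi$. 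Hence ``$\tau(y)\le\pi$'' does not follow from your step 1, and proving that geodesics maximize \emph{inside} $E(\Lambda)$ is essentially equivalent to the global hyperbolicity you are trying to establish, so the setup is circular. The same gap contaminates step 2: your formula for $\tau$ and the bound $\tau\le\pi/2$ are actually wrong. In the Fuchsian case ($\Lambda$ the boundary of a totally geodesic copy of $\HH^{n}$), $\wt E(\wt\Lambda)$ is the diamond between two antipodal points $p^{\pm}$ of $\wt\AdS_{n+1}$; the vertical timelike geodesic from a point near $p^{+}$ down to $p^{-}$ stays in the domain for proper time close to $\pi$, so $\sup\tau=\pi$, and for that past direction $w$ the indexing set $\{x\in\Lambda\,:\,\sca{w}{x}>0\}$ is \emph{empty}, contrary to your nonemptiness claim (the first zero of $s\mapsto\sca{\gamma(s)}{x}$ lies in $(\pi/2,\pi)$ when $\sca{w}{x}<0$). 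Finally, you explicitly leave unproved the estimate $\tau\to 0$ along past-inextendible causal curves, which is condition (2) of CT-regularity and, as you acknowledge, the heart of the matter; a description of ``the mechanism'' is not a proof.

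For comparison, the paper's proof avoids all quantitative control of individual geodesics by working in the conformal model on the universal cover. By Lemma~\ref{le.one-to-one} it suffices to treat $\wt E(\wt\Lambda)$. By Corollary~\ref{c.inter-boundary}, $\Cl(\wt E(\wt\Lambda))$ is compact and meets $\partial\wt\AdS_{n+1}$ exactly along $\wt\Lambda$; since a point $x\in\wt E(\wt\Lambda)$ is invisible from $\wt\Lambda$, the set $J^{-}(x)\cap\Cl(\wt E(\wt\Lambda))$ is a compact subset of $\wt\AdS_{n+1}$ itself, on which the conformal factor $r^{2}$ is bounded. Both the finiteness of $\wt\tau(x)$ and the decay of $\wt\tau$ along past-inextendible causal curves then reduce to the corresponding elementary statements for a compact causally convex region of $(\RR\times\DD^{n},-d\theta^{2}+ds^{2})$. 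If you wish to keep your Klein-model computation, you must first supply a compactness statement of this kind (or a direct proof that causal curves in $E(\Lambda)$ are maximized by geodesics); the bilinear-form formalism alone does not control the lengths of non-geodesic causal curves near the conformal boundary.
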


\begin{proof} 
Recall that $\Lambda$ is, by definition, the projection of an
achronal topological sphere $\wt\Lambda\subset
\partial\wt\AdS_{n+1}$, and that $E(\Lambda)$ is the projection of
the invisible domain $\wt E(\wt\Lambda)$ of $\wt\Lambda$ in
$\wt\AdS_{n+1}\cup\partial\wt\AdS_{n+1}$. We will prove that $\wt
E(\wt\Lambda)$ has regular cosmological time. Since the projection of
$\wt E(\wt\Lambda)$ on $E(\Lambda)$ is one-to-one
(lemma~\ref{le.one-to-one}), this will imply that 
$E(\Lambda)$ also has regular cosmological time. We denote by $\wt\tau$ the
cosmological time of $\wt E(\wt\Lambda)$.

Let $x$ be a point in $\wt E(\wt\Lambda)$. On the one hand, according to 
corollary~\ref{c.inter-boundary}, $\Cl(\wt E(\wt\Lambda))$
is a compact subset of $\wt\AdS_{n+1}\cup\partial\wt\AdS_{n+1}$, and
the intersection $\Cl(\wt
E(\wt\Lambda))\cap \partial\wt\AdS_{n+1}$ equals $\wt\Lambda$. On the
other hand, 
since $x$ is in the invisible domain of $\wt\Lambda$, the set
$J^-(x)$ is disjoint from $\wt\Lambda$. Therefore $J^-(x)\cap\Cl(\wt    
E(\wt\Lambda))$ is a compact subset of $\wt\AdS_{n+1}$. Therefore
$J^-(x)\cap\Cl(\wt E(\wt\Lambda))$ is conformally equivalent to a
compact causally convex domain in $(\RR\times\DD^{n},-d\theta^2+ds^2)$, with a
bounded conformal factor since everything is compact. It 
follows that the lengths of the past-directed causal curves starting
at $x$ contained in $\wt E(\wt\Lambda)$ is bounded (in other words,
$\wt\tau(x)$ is finite), and that, for every past-oriented
inextendible causal curve $c:[0,+\infty)\to \wt E(\wt\Lambda)$ with
$c(0)=x$, one has $\wt\tau(c(t))\to 0$ when $t\to\infty$.
This proves that $\wt E(\wt\Lambda)$ has regular cosmological time.
\end{proof}

Hence, GH-regular domains and their quotients are globally hyperbolic (see Theorem~\ref{teo.cosmogood}, 
Proposition~\ref{pro.cosmolift}).

\subsection{GHC AdS spacetimes are GH-regular}

\begin{defi}
A GH spacetime with constant curvature $-1$ is \emph{maximal}
(abbreviation MGH)  if it admits no non-surjective embedding in
another GH spacetime $N$ with constant curvature such that each Cauchy
hypersurface  in $M$ embeds in $N$ as a Cauchy hypersurface.
\end{defi}

Any GH spacetime with constant curvature $-1$ embeds in a MGH spacetime,
and this maximal extension is unique up to isometry (see~\cite{choquet}). 
Hence, the classification
of GH spacetimes with constant curvature $-1$ essentially reduces to
the classification of MGH ones.

\begin{theo}
\label{t.iso-quotient}
Every $(n+1)$-dimensional MGHC spacetime with constant curvature $-1$ is
isometric to the quotient of a GH-regular domain in $\AdS_{n+1}$ by a
torsion-free discrete subgroup of $\SO_0(2,n)$. 
\end{theo}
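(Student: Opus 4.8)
The plan is to run a Mess-type classification argument: produce a developing map for the $\AdS$-structure, show it is an embedding onto an invisible domain, and then exploit compactness of the Cauchy hypersurface together with maximality to identify that domain as a GH-regular one.

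First I would use the $(\SO_0(2,n),\AdS_{n+1})$-structure carried by $M$ to build a developing map $\cD:\wt M\to\wt\AdS_{n+1}$ on the universal cover $\wt M$, equivariant for a holonomy morphism $\rho:\Gamma\to\SO_0(2,n)$ with $\Gamma=\pi_1(M)$. I would develop into the universal space $\wt\AdS_{n+1}$ rather than into $\AdS_{n+1}$, so that the target is strongly causal and genuinely time oriented (Proposition~\ref{p.causal-structure}); this is legitimate since $\wt M$ is simply connected and $\wt\AdS_{n+1}\to\AdS_{n+1}$ is a covering. Fixing a compact Cauchy hypersurface $S\subset M$, its preimage $\wt S\subset\wt M$ is a Cauchy hypersurface of $\wt M$ on which $\Gamma$ acts cocompactly.

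The heart of the argument is to show that $\cD$ is an embedding, and this is the step I expect to be the main obstacle, since developing maps of Lorentzian $(G,X)$-structures are not injective in general. I would first show that $\cD(\wt S)$ is an achronal (spacelike) topological hypersurface of $\wt\AdS_{n+1}$: local achronality is immediate because $\cD$ is a causal local isometry and $\wt S$ is achronal in $\wt M$, and a connectedness argument upgrades this to global achronality. By the description of achronal hypersurfaces as graphs of $1$-Lipschitz functions over $\DD^{n}$ (\S~\ref{sub.achro}), together with the fact that $\wt S$ meets every inextendible timelike curve of $\wt M$ exactly once, I would deduce that $\cD|_{\wt S}$ is injective. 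I would then promote injectivity from $\wt S$ to all of $\wt M$ using the global hyperbolic splitting $\wt M\cong\RR\times\wt S$: each point of $\wt M$ is joined to $\wt S$ by causal geodesics, and tracking these under $\cD$ while using strong causality of $\wt\AdS_{n+1}$ forces global injectivity.

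With $\cD$ an embedding, write $\Omega=\cD(\wt M)$; it is open and causally convex, being the isometric copy of a globally hyperbolic spacetime, and $\rho(\Gamma)$ acts on it freely. Let $\wt\Lambda=\Cl(\Omega)\cap\partial\wt\AdS_{n+1}$ be the \emph{limit set}, a closed achronal subset of $\partial\wt\AdS_{n+1}$. I would then show $\Omega=\wt E(\wt\Lambda)$: the inclusion $\Omega\subseteq\wt E(\wt\Lambda)$ holds because causal convexity keeps $\Omega$ out of $J^{\pm}(\wt\Lambda)$, while equality follows from maximality, since $\wt E(\wt\Lambda)$ is itself globally hyperbolic (its quotient is CT-regular by Proposition~\ref{pro.adsregular}, hence GH by Theorem~\ref{teo.cosmogood}), so a proper inclusion would contradict the MGH hypothesis. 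Cocompactness of the $\Gamma$-action on $\wt S$ forces $\wt\Lambda$ to be a full topological $(n-1)$-sphere rather than a smaller (or pure lightlike) achronal set, so that $\Omega$ is GH-regular and $\wt\Lambda$ is recovered from $\Omega$ by Corollary~\ref{c.inter-boundary}. Finally, since the projection $\wt E(\wt\Lambda)\to E(\Lambda)$ is one-to-one (Lemma~\ref{le.one-to-one}), passing to the quotient yields an isometry $M\cong E(\Lambda)/\rho(\Gamma)$; proper discontinuity and freeness of the action (Proposition~\ref{pro.cosmolift}) show that $\rho(\Gamma)$ is a torsion-free discrete subgroup of $\SO_0(2,n)$, as required.
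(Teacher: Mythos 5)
Your outline is the same Mess-type argument the paper sketches: develop into $\wt\AdS_{n+1}$, show the developed Cauchy hypersurface is a properly embedded achronal graph, identify its Cauchy development with the invisible domain of its boundary sphere, and conclude by maximality. The packaging differs only cosmetically (you define $\wt\Lambda$ as $\Cl(\cD(\wt M))\cap\partial\wt\AdS_{n+1}$ and use maximality to force $\cD(\wt M)=\wt E(\wt\Lambda)$, whereas the paper takes $\wt\Lambda=\partial\,\cD(\wt\Sigma)$ and gets the sphere directly from the graph description).

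There is, however, a genuine gap at exactly the step you flag as the main obstacle. You assert that $\cD(\wt\Sigma)$ is globally achronal because it is locally achronal and ``a connectedness argument upgrades this to global achronality.'' No such upgrade exists: a connected immersed spacelike hypersurface in a strongly causal spacetime is locally achronal but need not be achronal, embedded, or a graph. Already in $\RR^{1,2}$ the immersed surface $(r,\theta)\mapsto(\eps\theta,\,r\cos\theta,\,r\sin\theta)$, $r>\eps$, $\theta\in\RR$, is spacelike and connected yet contains chronologically related points; the obstruction is that it is \emph{incomplete}. The ingredient that makes the statement true here --- and the one the paper actually uses --- is completeness of the induced Riemannian metric on $\cD(\wt\Sigma)$, which comes from compactness of $\Sigma$ (the $\Gamma$-action on $\wt\Sigma$ is isometric and cocompact). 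One then invokes the nontrivial theorem that a \emph{complete} immersed spacelike hypersurface of $\wt\AdS_{n+1}$ is automatically properly embedded and is the graph of a $1$-Lipschitz function $f:\DD^{n}\to\RR$ in the conformal model; this single statement delivers at once the achronality, the injectivity of $\cD|_{\wt\Sigma}$, and --- via the Lipschitz extension $\bar f$ to $\overline{\DD}^{n}$ --- the fact that the boundary $\wt\Lambda$ is a full achronal $(n-1)$-sphere, which you instead try to recover by a separate cocompactness argument at the end. A symptom of the gap is that spatial compactness plays no role in your argument until that last step, whereas the theorem is false without it; any correct proof must consume it precisely where you are trying to establish achronality. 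The remainder of your outline (injectivity of $\cD$ on all of $\wt M$ from the Cauchy property, $\cD(\wt M)\subseteq D(\wt S)=\wt E(\wt\Lambda)$, surjectivity from maximality) matches the paper's.
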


This theorem was proved by Mess in his celebrated
preprint \cite{mess1, mess2} (Mess only deals with the case where $n=2$, but his arguments also
apply in higher dimension). For the reader's convenience, we shall
recall the main steps of the proof (see~\cite[Corollary 
11.2]{barbtz1} for more details).   
  
\begin{proof}[Sketch of proof of Theorem~\ref{t.iso-quotient}]
Let $(M,g)$ be $(n+1)$-dimensional MGHC spacetime with constant curvature
$-1$. In other words, $(M,g)$ is a locally modeled on $\AdS_{n+1}$.
The theory of
$(G,X)$-structures provides us with a locally isometric developing
map $\cD:\widetilde M\to \wt\AdS_{n+1}$ and a holonomy representation
$\rho:\pi_1(M)\to\SO_0(2,n)$. Pick a Cauchy  
hypersurface $\Sigma$ in $M$, and a lift $\widetilde \Sigma$ of
$\Sigma$ in $\widetilde M$. Then $\wt S:=\cD(\widetilde\Sigma)$ is an
immersed complete spacelike hypersurface in $\wt\AdS_{n+1}$. One can
prove that such a hypersurface is automatically properly embedded
and corresponds to the graph of a $1$-Lipschitz function
$f:\DD^n\to\RR$ in the conformal model
$(\RR\times\DD^2,-d\theta^2+ds^2)$. Such a function extends to a
$1$-Lipschitz function $\bar f$ defined on the closed disc
$\overline{\DD}^n$. This shows that the boundary $\partial\wt S$ of $\wt S$
in $\wt\AdS_{n+1}\cup\partial\wt\AdS_{n+1}$ is an achronal topological sphere $\wt\Lambda$
contained in $\partial\wt\AdS_{n+1}$. 

On the one hand, it is easy to see that the Cauchy development
$D(\wt S)$ coincides with the invisible domain $\wt E(\wt\Lambda)$ (this
essentially relies on the fact that $\wt S\cup\partial\wt S$ is the graph of
the $1$-Lipschitz function $\bar{f}$, hence an achronal set in
$\widetilde{\mbox{AdS}}_{n+1} \cup \partial\wt\AdS_{n+1}$).  
 
On the other hand, since $\widetilde\Sigma$ is a Cauchy
hypersurface in $\widetilde M$,  the image $\cD(\widetilde M)$
is necessarily contained in $D(\wt S)=\wt E(\wt\Lambda)$.
Hence, the developing map $\cD$ induces an isometric embedding
from $M$ into $\Gamma\setminus D(\wt S)$, where $\Gamma:=\rho(\pi_1(M))$. 
Since $M$ is maximal, this embedding must be
onto, and thus, $M$ is isometric to the quotient $\Gamma\setminus 
\wt E(\wt\Lambda)$.  
\end{proof}

\begin{defi}
A representation $\rho: \Gamma \to \SO_{0}(2,n)$ is GH-regular if it is the holonomy of 
a GH-regular spacetime, \ie if is is faithfull, discrete and preserves a GH-regular
domain. If the quotient spacetime $\rho(\Gamma)\backslash{E}(\Lambda)$ is spatially compact, 
we say that $\rho$ is GHC-regular.
\end{defi}

\section{Anosov anti-de Sitter manifolds are MGHC}

\subsection{Anosov representations}
\subsubsection{General definition}
\label{sub.defsgenerales}
Let $N$ be a manifold equipped with a non singular flow $\Phi^{t}$ and an auxiliary Riemannian metric $\Vert$.

\begin{defi}
\label{defanosov}
A closed subset $F \subseteq N$ is $\Phi^{t}$-hyperbolic if it is $\Phi^{t}$-invariant 
and if the tangent bundle of $N$ admits a decomposition 
$TN=\Delta \oplus E^{ss} \oplus E^{uu}$ over $F$ such that,
for some positive constants $a$, $b$:

\begin{itemize}
\item The line bundle $\Delta$ is tangent to the flow,

\item for any vector $v$ in $E^{ss}$ over a point $p$ of $N$, and for any positive $t$:

\[ \Vert d_{p}\Phi^{t}(v) \Vert \leq b e^{-at} \Vert v \Vert \]

\item for any vector $v$ in $E^{uu}$ over a point $p$ of $N$, and for any negative $t$:

\[ \Vert d_{p}\Phi^{t}(v) \Vert \leq b e^{at} \Vert v \Vert \]

\end{itemize}
If $F$ is the entire manifold $N$, the flow $\Phi^{t}$ is Anosov.
\end{defi}

Typical examples are geodesic flows of negatively curved Riemannian manifolds. Let $\Gamma$
be the fundamental group of $N$. Let $Y$ be a manifold, and $G$ be a Lie group acting smoothly
on $Y$. Given any representation $\rho: \Gamma \to G$ one contructs the associated
flat bundle $E_{\rho}$ over $N$: it is the quotient of the product $\tilde{N} \times Y$ by
the natural action of $\Gamma$, with the projection $\pi_\rho: E_\rho
\to N$. The bundle $E_\rho$ inherits a flow $\Phi_\rho^t$ from the lifting $\tilde{\Phi}^{t}$ of
$\Phi^t$ on $\widetilde{N}$. 

\begin{defi} A representation $\rho: \Gamma \to G$ is $(G,Y)$-Anosov
if the flat bundle $\pi_\rho:E_\rho \to N$ admits a continuous section $s: N\to
  E_\rho$ such that the image of $s$ is an invariant hyperbolic subset for $\Phi_{\rho}^{t}$.
\end{defi}

A very nice feature of the Anosov representations is the following
proposition, which is consequence of the stability property of
closed hyperbolic set (see \cite{labourie}, proposition 2.1
for a proof):

\label{not:rep}
\begin{theo}
\label{thm:labourie}
Let $N$ a compact manifold endowed
with an Anosov flow $\Phi^t$. The set of $(G,Y)$-Anosov
representations from $\Gamma=\pi_1(N)$ to $G$ is open in the space
of representations of $\Gamma$ in $G$, usually denoted
$\Rep(\Gamma,G)$ (endowed with the compact-open topology)
\end{theo}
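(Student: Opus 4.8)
The plan is to deduce the statement directly from the structural stability (persistence) of compact hyperbolic invariant sets. Fix a $(G,Y)$-Anosov representation $\rho_0$, together with its continuous section $s_0: N\to E_{\rho_0}$ whose image $\Lambda_0:=s_0(N)$ is, by hypothesis, a $\Phi_{\rho_0}^{t}$-hyperbolic set; it is compact because $N$ is. It suffices to produce a neighborhood of $\rho_0$ in $\Rep(\Gamma,G)$ all of whose elements are $(G,Y)$-Anosov.

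First I would trivialize the whole family of flat bundles simultaneously. For $\rho$ in a small neighborhood of $\rho_0$ the associated $Y$-bundles $E_\rho$ are all isomorphic, as smooth fiber bundles over $N$, to $E_{\rho_0}$; choosing a continuous family of such bundle isomorphisms covering $\mathrm{id}_N$ and transporting the flows $\Phi_\rho^{t}$ through them recasts the problem on the single fixed total space $E_{\rho_0}$, now carrying a family of flows. Since $G$ acts smoothly on $Y$ and each evaluation $\rho\mapsto\rho(\gamma)$ is continuous, this family depends continuously on $\rho$ in the $C^1$ topology on a neighborhood of the compact set $\Lambda_0$, and reduces to $\Phi_{\rho_0}^{t}$ at $\rho=\rho_0$.

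Now I would invoke the persistence theorem for hyperbolic sets (Hirsch--Pugh--Shub): a compact hyperbolic invariant set survives every sufficiently small $C^1$-perturbation of the flow, the surviving set being $C^0$-close to the original and carrying splittings $\Delta\oplus E^{ss}\oplus E^{uu}$ with uniform constants that vary continuously. Applied to $\Lambda_0$ and the family above, this yields, for each $\rho$ close to $\rho_0$, a compact $\Phi_\rho^{t}$-invariant hyperbolic set $\Lambda_\rho$ together with an orbit equivalence $h_\rho:\Lambda_0\to\Lambda_\rho$ that is $C^0$-close to the inclusion.

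The step I expect to be the main obstacle is to guarantee that $\Lambda_\rho$ is again the image of a \emph{section} of $\pi_\rho$, rather than merely some abstract hyperbolic set. One route is transversality: being the image of $s_0$, the set $\Lambda_0$ is transverse to the fibers of $\pi_{\rho_0}$ and $\pi_{\rho_0}$ restricts to a homeomorphism $\Lambda_0\to N$; setting $\sigma_\rho:=h_\rho\circ s_0$, one has that $\pi_\rho\circ\sigma_\rho:N\to N$ is $C^0$-close to $\pi_{\rho_0}\circ s_0=\mathrm{id}_N$, whence surjectivity by a degree-one argument and injectivity from transversality of $\Lambda_\rho$ to the fibers, so that $s_\rho:=\sigma_\rho\circ(\pi_\rho\circ\sigma_\rho)^{-1}$ is the desired section. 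The delicate point in this route is that the abstractly persisting $\Lambda_\rho$ is only $C^0$-close, so its transversality to the fibers is not automatic. The clean way to secure graphicality is therefore to run the persistence argument directly in the space of continuous sections: pulling back by the flow defines a graph transform on sections $N\to E_\rho$ near $s_0$, and the fiber contraction and expansion encoded in the splitting $\Delta\oplus E^{ss}\oplus E^{uu}$ make this transform a contraction for the $C^0$ distance, whose unique fixed point is a continuous $\Phi_\rho^{t}$-invariant section $s_\rho$ with automatically hyperbolic image. Either way, every $\rho$ in a neighborhood of $\rho_0$ is then $(G,Y)$-Anosov, proving that $\op{Anos}_{Y}(\Gamma,G)$ is open in $\Rep(\Gamma,G)$.
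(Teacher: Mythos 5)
Your argument is correct and is exactly the route the paper takes: the paper gives no proof of its own, attributing the statement to the structural stability of compact hyperbolic invariant sets and citing Labourie's Proposition 2.1, whose proof is in essence the graph-transform-on-sections argument you settle on. Your observation that the abstract persistence theorem only yields a $C^0$-close invariant set, so that one must instead run the contraction directly on the space of continuous sections to guarantee the perturbed hyperbolic set is again a graph, is precisely the right refinement and matches the standard argument.
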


\subsubsection{Anosov AdS representations}
Here, we are concerned with the case $(G, Y)=(\SO_{0}(2,n), \cY)$ where
$\cY$ is the open subset of
$\Ein_n \times \Ein_n$ made of the pairs of points that can be
joined by a spacelike geodesic. 
%The tangent bundle of $\cY$ is the
%sum of two copies of $\T \Ein_n$ called stable, unstable factors.
Given a $(\SO_{0}(2,n), \cY)$-Anosov representation, the section $s: N \to E_{\rho}$
defining the Anosov property lifts to a map $\ell_\rho: \wt{N} \to \cY$ which is $\Gamma$-equivariant, \ie
$\rho_g \circ \ell_\rho= \ell_\rho \circ g$, and
$\Phi^t$-invariant. This application can be decomposed in
$\ell_\rho=(\ell_\rho^+, \ell_\rho^-)$ where $\ell_\rho^+$
(resp. $\ell_\rho^-$) are two applications from $\wt{N}$ to $\Ein_n$.
%the stable
%(resp. unstable) Einstein factor of $\cY \subseteq \Ein_n \times \Ein_n$.

\begin{remark}
\label{rk.def2anosov}
An equivalent way to formulate the $(G, \cY)$-Anosov property is to require the
existence of continuous maps $\ell^{\pm}_{\rho}: \wt{N} \to \Ein_{n}$ and 
of a family of Riemannian metrics $g^{p}$ depending continuously on
$p \in \wt{N}$ and defined in a neighborhood of $\ell_\rho^{\pm}(p)$ in $\Ein_{n}$ such that:

\begin{enumerate}
\item
This family is $\Gamma$-equivariant, \ie for every $\gamma$ in $\Gamma$:

$$
\begin{aligned}
&{g}^{\gamma p}(d\rho(\gamma) w, d\rho(\gamma) w) =
{g}^{p}(w,w) \end{aligned}$$

where $w$ belongs to $\T_{\ell^\pm(p)}\Ein_n$ and $d\rho(\gamma)$ is the differential of
$\rho(\gamma)$ at $\ell^\pm(p)$.

\item
The family increases (resp. decreases) exponentially along positive (resp. negative) orbits
of $\tilde{\Phi}^{t}$, \ie for some $a, b >0$, if $w$ is a vector tangent at $\ell_\rho^{+}(p)$ (resp. $\ell_\rho^{-}(p)$) to $\Ein_{n}$,
then:

$$
\begin{aligned}
&{g}^{\tilde{\Phi}^{t}(p)}(w,w) \geq b^{-1}\exp(at)
{g}^{p}(w,w) \\
&{g}^{\tilde{\Phi}^{t}(p)}(w,w) \leq b\exp(-at)
{g}^{p}(w,w)\end{aligned}$$

\end{enumerate}
\end{remark}

\subsubsection{Basic properties of the geodesic flow}
\label{sub.progeodesic}
From now, we only consider the case 
where the Anosov flow is the geodesic flow on the unit tangent bundle 
over a hyperbolic manifold:  $\Gamma$ is a cocompact torsion free lattice of
$\SO_{0}(1,n)$, $N$ is the quotient $\T^1\HH^n$ by $\Gamma$ and $\Phi^{t}$
is the geodesic flow $\phi^{t}$ on $\Gamma\backslash\T^{1}\HH^{n}$, projection
of the geodesic flow $\tilde{\phi}^{t}$ of $\T^{1}\HH^{n}$.
Let's remind
few well-known properties of geodesic flows on hyperbolic manifolds and fix some notations:

\begin{enumerate}
\item The orbit by $\tilde{\phi}^{t}$ of $(x,v)$ in $\T^{1}\HH^{n}$ is the set of 
points $(x^{t}, v^{t})$ where $x^{t}$ describes at unit speed the geodesic tangent to $(x,v)$
and $v^{t}$ is tangent to this geodesic at $x^{t}$,

\item We denote by $\ell^+(x,v)$ the future extremity in $\partial\HH^{n}$ of the geodesic tangent to $(x,v)$, and by
$\ell^-(x,v)$ the past extremity of this geodesic. The fibers of $\ell^+$ (respectively of $\ell^-$) are called
the \textit{stable leaves} (respectively the \textit{unstable leaves}).

\item If $(x,v)$, $(x',v')$ belong to the same stable leaf, then there is some $T$ such that the hyperbolic distance between 
$\tilde{\phi}^{t+T}(x,v)$ and $\tilde{\phi}^{t}(x',v')$ exponentially tend to $0$ when $t \to +\infty$.

\item Geodesics in $\HH^{n}$ are characterized by their extremities, which are distinct.
Hence the map $(\ell^+, \ell^-): \T^{1}\HH^{n} \to \partial\HH^{n} \times \partial\HH^{n}$ induces an identification between the orbit
space of $\tilde{\phi}$ and $\partial\HH^{n} \times \partial\HH^{n} \setminus \sD$ where 
$\sD$ is the diagonal.

\item Every element $\gamma$ of $\Gamma$ is loxodromic: it admits one attractive fixed point $x^{+}_{\gamma}$ in
$\partial\HH^{n}$ and one repelling fixed point $x^{-}_{\gamma}$. The geodesic with extremities $x^{+}_{\gamma}$ 
and $x^{-}_{\gamma}$ is the unique geodesic of $\HH^{n}$ preserved by $\gamma$. There is a real number $T>0$ such
that for every $(x,v)$ tangent to the $\gamma$-invariant geodesic and such that $\ell^+(x,v)=x^{+}_{\gamma}$,
$\ell^-(x,v) =x^{-}_{\gamma}$ we have: $\tilde{\phi}^{T}(x,v)=\gamma(x,v)$.

\item Attractive fixed points of elements of $\Gamma$ are dense in $\partial\HH^{n}$.

\item Periodic geodesics are dense in $N$, \ie pairs $(x^{+}_{\gamma}, x^{-}_{\gamma})$ are dense in
$\partial\HH^{n} \times \partial\HH^{n} \setminus \sD$.

\end{enumerate}

\subsection{Fuchsian representations are Anosov}
\label{sub.inclusion}

The representation $\rho: \Gamma \to \SO_{0}(2,n)$ is \textit{Fuchsian} if it is faithfull,
discrete, and that $\rho(\Gamma)$ admits
a global fixed point in $\AdS_{n+1}$. Up to conjugacy in $\SO_{0}(2,n)$
every Fuchsian representation is the inclusion $\rho_{0}: \Gamma \subset \SO_{0}(1,n) \subset  \SO_{0}(2,n)$.
In this $\S$ we prove that Fuchsian representations are $(\SO_{0}(2,n), \cY)$-Anosov.

\subsubsection{De Sitter domains in $\Ein_n$}
For any $x$ in $\AdS_{n+1}$ the associated de Sitter domain $\partial{U}(x)$ 
(cf. remark~\ref{rk.desitter}) is the open subset 
of $\Ein_n$ comprising limits of spacelike geodesics starting at $x$.
If $(x,v)$ is a unit spacelike
tangent vector to $\AdS_{n+1}$ --- \ie $\mathrm{q}_{2,n}(x)=-1$, $\sca{x}{v} =
0$ and $\mathrm{q}_{2,n}(v)=-1$ --- then $x + v \subseteq \mathcal{C}_{n}$ is a
representant of $\ell^+(x,v)$ (see \S~\ref{sub.ell}). Hence
$\partial{U}(x)$ is simply the projection on the sphere
$\SS(\RR^{2,n})$ of  
$$\cU_x:=\{  x +v \tq v \in \{x\}^\orth, \mathrm{q}_{2,n}(v)=1 \} \subseteq
\mathcal{C}$$

An inverse map for this projection can be constructed from the
application $s_x: \RR^{2,n} \setminus \{x\}^\orth \to \RR^{2,n}$ which
maps a point $y \in \RR^{2,n} \setminus \{ x\}^\orth$ to the
unique colinear point $s_x(y)$ in $H_x=\{ z \tq \sca{z}{x}=-1\}$, \ie 
$s_x(y)=-y/\sca{y}{x}$. This map induces a diffeomorphism
$\tilde{s}_x: \partial{U}(x) \subseteq \Ein_n \to \cU_x$.

\subsubsection{Construction of the metric}
\label{sub.metricgV}
For each choice of a point $V \in \RR^{2,n}$ of norm $-1$ such that $\sca{x}{V} =
0$ we construct a metric $g^{x,V}$ on $\cU_x$ as follows. For any
choice of $\zeta \in \cU_x$ such that $\zeta=x+v$ ($v \in \T_x \AdS_{n+1}$), we
define a unit timelike tangent vector $\tau^{x,V}_\zeta$ to $\cU_x$ at $\zeta$
by 

$$\tau^{x,V}_\zeta:=\frac{V - \sca{V}{v} v}{\sca{V}{v}^2 + 1}$$

Let $\bar{g}^{x,V}_\zeta$ be the metric on $\cU_x$ obtained by
«changing the sign» of $\tau^{x,V}_\zeta$ in the metric induced by
$\mathrm{q}_{2,n}$ on $\T_\zeta \cU_x$. More precisely:

$$\bar{g}^{x,V}_\zeta(w,w):=\mathrm{q}_{2,n}(w,w) + 2 \sca{w}{\tau_\zeta^{x,V}}^2 $$

The pull-back of this metric by the section $s_x$  is a 
Riemannian metric ${g}^{x,V}=s_x^* \bar{g}^{x,V}$ on $\partial{U}(x)$ for each choice of $V$ in $\AdS_{n+1}$.

\begin{remarks}\-
\begin{enumerate}
\item
In the terminology of \cite{benbon}
the metric $g^{x,V}$ is the Wick rotation performed on the de Sitter metric 
of $\partial{U}(x)$ along the gradient of the time function $\zeta \to \sca{V}{\zeta}$.

\item 
The previous construction is $\SO_{0}(2,n)$-equivariant in the sense that
if $\gamma$ is any isometry of $\RR^{2,n}$,
$$
\begin{aligned}
&\gamma \partial{U}(x)=\partial{U}(\gamma(x))\\
\hbox{ and }
&{g}^{\gamma x, \gamma V}_{\gamma(\zeta)}(d\gamma w, d\gamma w) =
{g}^{x,V}_\zeta(w,w) \end{aligned}$$

\end{enumerate}
\end{remarks}

\subsubsection{The inclusion is Anosov}
The group $\rho_0(\Gamma)$ preserves an element $V$ of $\AdS_{n+1}$ and
the stable spacelike hypersurface $\SS(V^{\orth}) \cap \AdS_{n+1}$ isometric to $\HH^n$.
It gives a natural inclusion $\T^1\HH^n \subseteq \cE^1 \AdS_{n+1}$. 
We define the maps $\ell^{\pm}_{\rho_{0}}: \T^{1}\HH^{n} \to \Ein_{n}$
as the restrictions of $\ell^\pm$ to $\T^1\HH^n$. They are both $\tilde{\phi}^t$-invariant and
$\Gamma$-equivariant. Since $\HH^n \subseteq \AdS_{n+1}$ is spacelike,
$\ell_{\rho_0}^+(x,v)$ and $\ell_{\rho_0}^-(x,v)$ are joigned by a
spacelike geodesic, implying that the map $\ell_{\rho_0} :=
(\ell_{\rho_0}^-, \ell_{\rho_0}^+)$ takes its value in $\cY \subseteq
\Ein_n \times \Ein_n$. In order to prove that the representation
$\rho_0$ is Anosov, we only need to check the hyperbolicity property, as formulated in Remark~\ref{rk.def2anosov},
for the family of metrics ${g}^{(x,v)}:={g}^{x,V}$. It is the matter of the following proposition which only
establishes the expanding property at $\ell_{\rho_0}^{+}(x,v)$, the contracting property at $\ell_{\rho_0}^{-}(x,v)$
is similar.

\begin{prop}
\label{pro.numultiplie}
Let $(x,v)$ be an element of $\T^{1}\HH^{n}$ and $\nu$ a vector tangent to $\Ein_{n}$ at $\ell_{\rho_0}^{+}(x,v)$.
Then ${g}^{\tilde{\phi}_{\rho_{0}}^{t}(x,v)}(\nu,\nu)=\exp(2t)g^{(x,v)}(\nu,\nu)$.
\end{prop}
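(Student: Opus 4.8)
The plan is to make both the geodesic flow and the two sections $\tilde{s}_x,\tilde{s}_{x^t}$ completely explicit in the ambient $\RR^{2,n}$, and then to read off the factor $\exp(2t)$ from a single quadratic form that does not depend on the basepoint. First I would record the flow linearly: writing $\tilde{\phi}^t(x,v)=(x^t,v^t)$ with
\[ x^t=\cosh(t)\,x+\sinh(t)\,v, \qquad v^t=\sinh(t)\,x+\cosh(t)\,v, \]
one checks directly that $\mathrm{q}_{2,n}(x^t)=-1$, $\mathrm{q}_{2,n}(v^t)=1$ and $\sca{x^t}{v^t}=0$, so $(x^t,v^t)\in\T^1\HH^n$. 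Setting $\zeta_0:=x+v$, the vector $\zeta_0$ is isotropic and represents the fixed point $p:=\ell^+_{\rho_0}(x,v)$, while $x^t+v^t=e^t\zeta_0$; this confirms both that $\ell^+$ is $\tilde{\phi}^t$-invariant and that $\nu\in\T_p\Ein_n$ makes sense for every $t$, with $p\in\partial U(x^t)$ for all $t$.

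The decisive simplification is that the timelike direction defining the metric is \emph{constant} along the orbit. Because $\HH^n=\SS(V^\orth)\cap\AdS_{n+1}$, the vector $V$ is orthogonal to $x$ and to $v\in\T_x\HH^n$, hence to $v^t=\sinh(t)x+\cosh(t)v$ for every $t$. Thus $\sca{V}{v^t}=0$, so that $\tau^{x^t,V}_{\zeta^t}=V$ where $\zeta^t:=\tilde{s}_{x^t}(p)=x^t+v^t$. Consequently both $g^{x,V}$ and $g^{x^t,V}$ are pullbacks, by $\tilde{s}_x$ and $\tilde{s}_{x^t}$ respectively, of the \emph{single} quadratic form $Q_V(w):=\mathrm{q}_{2,n}(w,w)+2\sca{w}{V}^2$ on $\RR^{2,n}$, whose associated bilinear form is $B_V(w,w'):=\sca{w}{w'}+2\sca{w}{V}\sca{w'}{V}$.

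Next I would compare the two sections. Since $s_x(y)=-y/\sca{y}{x}$ is invariant under positive rescaling of $y$, it descends to $\Ein_n=\SS(\cC_n)$, and for $y$ near $\zeta_0$ one has $\tilde{s}_{x^t}(y)=\lambda(y)\,\tilde{s}_x(y)$ with $\lambda(y)=\sca{y}{x}/\sca{y}{x^t}$. From $\sca{\zeta_0}{x}=-1$ and the computation $\sca{\zeta_0}{x^t}=-e^{-t}$ one gets $\lambda(p)=e^t$, so differentiating at $p$ yields
\[ W^t:=d_p\tilde{s}_{x^t}(\nu)=e^t\,W+c\,\zeta_0, \qquad W:=d_p\tilde{s}_x(\nu), \]
for some scalar $c$ that there will be no need to compute. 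Finally, since $W$ is tangent to $\cU_x$ at $\zeta_0=x+v$ it satisfies $\sca{W}{x}=\sca{W}{v}=0$, hence $\sca{W}{\zeta_0}=0$; combined with $\mathrm{q}_{2,n}(\zeta_0)=0$ and $\sca{\zeta_0}{V}=0$ this gives $Q_V(\zeta_0)=0$ and $B_V(W,\zeta_0)=0$. Expanding,
\[ g^{\tilde{\phi}^t(x,v)}(\nu,\nu)=Q_V(W^t)=e^{2t}Q_V(W)+2e^t c\,B_V(W,\zeta_0)+c^2Q_V(\zeta_0), \]
and the last two terms vanish, leaving $e^{2t}Q_V(W)=e^{2t}\,g^{(x,v)}(\nu,\nu)$, as claimed.

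The only genuinely delicate point is the vanishing of the correction term $c\,\zeta_0$: it arises precisely because $\tilde{s}_x$ and $\tilde{s}_{x^t}$ land in the distinct hypersurfaces $\cU_x$ and $\cU_{x^t}$ and so differ by a point-dependent homothety, and it is killed only thanks to the triple coincidence that $\zeta_0$ is $\mathrm{q}_{2,n}$-isotropic, $\mathrm{q}_{2,n}$-orthogonal to $W$, and orthogonal to $V$. Everything else --- the flow formulas, the value $\lambda(p)=e^t$, and the identity $\tau^{x^t,V}_{\zeta^t}=V$ --- is routine, the last resting entirely on the Fuchsian feature that $V$ is orthogonal to the whole spacelike copy of $\HH^n$.
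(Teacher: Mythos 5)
Your proof is correct and follows essentially the same route as the paper: both arguments reduce everything to the single quadratic form $\mathrm{q}_{2,n}(w)+2\sca{w}{V}^2$ by noting that $\sca{V}{v^t}=0$ forces $\tau^{x^t,V}=V$ for all $t$, and both extract the factor $e^{2t}$ from the derivative of the section $s_{x^t}$ at $x+v$. The only cosmetic difference is that the paper observes $\sca{w}{x^t}=0$ directly, so the correction term along $\zeta_0$ vanishes identically, whereas you keep an undetermined multiple $c\,\zeta_0$ and kill it afterwards using that $\zeta_0$ is isotropic for $Q_V$ and $B_V$-orthogonal to $W$; both are valid.
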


\begin{proof}
Let $(x,v)$ be an element of $\T^1\HH^n$, and $x^t$ be the
base-point of $\tilde{\phi}^t(x, v)$, \ie $x^t=(\cosh t) x + (\sinh t) v$.
While the limit
vector $\zeta=\ell_{\rho_0}^+(\tilde{\phi}^{t}(x,v))$ doesn't change in the Einstein universe, its
representant in $\cU_{x^t} \subset \mathcal{C}_{n} \subset \RR^{2,n}$ vary with $t$; the
exponential expanding behaviour comes from the changes in the
derivative of the maps $s_{x^t}$.

The representant of $\zeta=\ell_{\rho_0}^{+}(x,v)$ in $\cU_{x}$ is $x+v$. The tangent vector
$\nu$ is the image under the derivative of $s_{x}$ of a vector $w$ tangent to $\cU_{x} \cap \cC_n$. 
In particular, we have $\sca{w}{x}=\sca{w}{v}=0$.
Its representant in $\cU_{x^{t}}$ is the image $w^{t}$ of $w$ under the derivative 
of $s_{x^{t}}$ at $x+v$, \ie:

$$ \begin{aligned}
w^{t}=\d_{x+v} s_{x^t}(w) &=\frac{(x+v) \sca{w}{x^t} - w
  \sca{{x+v}}{x^t}}{\sca{{x+v}}{x^t}^2}\\
  &= \frac{ 0  + w e^{-t}}{e^{-2t}} \\
&= e^t  w
\end{aligned}
$$

Since $\sca{V}{v}=0$, the vector $\tau^{x^{t}, V}_{\zeta}$ is $V$ for every $t$.
The proposition follows.
\end{proof}

\subsection{Anosov representations are GH-regular}
\label{sec.anosovregular}
We proceed to the proof that Anosov representations are GH-regular. Our goal in this {\S} is to prove that the 
applications $\ell^\pm_\rho: \wt{N} \to \Ein_{n}$ have the same image
and that this common image is an acausal
hypersphere of $\Ein_{n}$. 

\begin{lemma}
The application $\ell^+_\rho$ (resp. $\ell^-_\rho$) is constant along the
leaves of the stable (resp. unstable) foliation of $\wt{N}$.
\end{lemma}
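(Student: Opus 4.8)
The plan is to prove the statement for $\ell^{+}_\rho$ and the stable foliation; the case of $\ell^{-}_\rho$ and the unstable foliation is identical after reversing time. Fix two unit vectors $p=(x,v)$ and $p'=(x',v')$ lying on the same stable leaf, \ie with $\ell^{+}(p)=\ell^{+}(p')$, and set $q:=\ell^{+}_\rho(p)$ and $q':=\ell^{+}_\rho(p')$; the goal is to show $q=q'$. Two facts are available at the start. First, $\ell^{+}_\rho$ is $\tilde{\phi}^{t}$-invariant, so $\ell^{+}_\rho(\tilde{\phi}^{s}(p))=q$ and $\ell^{+}_\rho(\tilde{\phi}^{s}(p'))=q'$ for every $s$. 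Second, by property~(3) of \S\ref{sub.progeodesic}, there is a real number $T$ such that the $\Gamma$-invariant distance $d$ on $\T^1\HH^n$ satisfies $d(\tilde{\phi}^{s+T}(p),\tilde{\phi}^{s}(p'))\to 0$ as $s\to+\infty$. The tool I will use is the equivariant family of metrics $g^{\,\cdot}$ of Remark~\ref{rk.def2anosov}, whose key features are $\Gamma$-equivariance and \emph{exponential expansion at $\ell^{+}_\rho$ along positive orbits}.

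The core of the proof is to convert the convergence of the two orbits in the compact quotient $N$ into a statement about $q$ and $q'$. Put $s_k:=t_k+T$ with $t_k\to+\infty$; since $N$ is compact one can choose $\gamma_k\in\Gamma$ so that, along a subsequence, $\gamma_k\tilde{\phi}^{s_k}(p)\to p_\infty$ in $\T^1\HH^n$, and then, because $\gamma_k$ acts by isometries and $d(\tilde{\phi}^{s_k}(p),\tilde{\phi}^{t_k}(p'))\to 0$, also $\gamma_k\tilde{\phi}^{t_k}(p')\to p_\infty$. Using the continuity and $\rho$-equivariance of $\ell^{+}_\rho$ together with its flow-invariance gives $\rho(\gamma_k)\,q\to z$ and $\rho(\gamma_k)\,q'\to z$, where $z:=\ell^{+}_\rho(p_\infty)$. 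Now the equivariance of the metric family, $g^{\gamma p}(d\rho(\gamma)\,\cdot,d\rho(\gamma)\,\cdot)=g^{p}(\cdot,\cdot)$, makes $\rho(\gamma_k)$ an isometry between a neighborhood of $q$ for $g^{\tilde{\phi}^{s_k}(p)}$ and a neighborhood of $\rho(\gamma_k)q$ for $g^{\gamma_k\tilde{\phi}^{s_k}(p)}$. Since $g^{\gamma_k\tilde{\phi}^{s_k}(p)}\to g^{p_\infty}$ by continuity and $\rho(\gamma_k)q,\rho(\gamma_k)q'\to z$, the $g^{p_\infty}$-distance between $\rho(\gamma_k)q$ and $\rho(\gamma_k)q'$ tends to $0$; transporting back by the isometry, I conclude $\op{dist}_{g^{\tilde{\phi}^{s_k}(p)}}(q,q')\to 0$.

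This is contradicted by the expansion property unless $q=q'$. Indeed, the defining inequality of Remark~\ref{rk.def2anosov} says that for $w\in \T_{q}\Ein_n$ one has $g^{\tilde{\phi}^{s}(p)}(w,w)\ge b^{-1}\exp(as)\,g^{p}(w,w)$, so the metric at the \emph{fixed} point $q=\ell^{+}_\rho(p)$ blows up as $s\to+\infty$; linearizing the distance at $q$, the quantity $\op{dist}_{g^{\tilde{\phi}^{s}(p)}}(q,q')$ grows at least like $b^{-1/2}\exp(as/2)$ times its initial value and cannot tend to $0$ along $s=s_k\to+\infty$ unless that initial value is zero, \ie $q=q'$. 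I expect the one delicate point to be exactly this matching of scales: the distance $\op{dist}_{g^{\tilde{\phi}^{s_k}(p)}}(q,q')$ is only literally defined while $q'$ stays in the (shrinking) $g^{\tilde{\phi}^{s_k}(p)}$-neighborhood on which the metric lives, so one must argue that the convergence forces $q'$ into that neighborhood while the exponential expansion simultaneously pushes it out — the exponential domination resolving the tension. Applying the same argument to $\ell^{-}_\rho$, using that its metric \emph{contracts} along positive orbits (equivalently expands along negative ones) and that unstable leaves have orbits converging as $s\to-\infty$, yields constancy of $\ell^{-}_\rho$ along the unstable foliation, completing the proof.
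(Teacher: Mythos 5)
Your overall strategy is exactly the one the paper intends (its own proof is the one-line ``corollary of item (3) in \S~\ref{sub.progeodesic} and compactness''), and your first two paragraphs are sound: flow-invariance of $\ell^+_\rho$, item (3) to make the two orbits asymptotic after a time shift, compactness of $N$ to extract $\gamma_k$ with $\gamma_k\tilde{\phi}^{s_k}(p)\to p_\infty$, and equivariance plus continuity to get $\rho(\gamma_k)q\to z$, $\rho(\gamma_k)q'\to z$, hence $\op{dist}_{g^{\tilde{\phi}^{s_k}(p)}}(q,q')\to 0$. The gap is in the final step, and it is not quite the one you flag. The inequality of Remark~\ref{rk.def2anosov} concerns tangent vectors \emph{at the single point} $q=\ell^+_\rho(p)$; ``linearizing the distance at $q$'' converts it into a lower bound on $\op{dist}_{g^{\tilde{\phi}^{s_k}(p)}}(q,q')$ only in the limit $q'\to q$, whereas here $q'$ is a fixed point possibly at definite distance from $q$. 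Pointwise expansion at $q$ together with $\rho(\gamma_k)q,\rho(\gamma_k)q'\to z$ does not by itself force $q=q'$: already on the circle one can build diffeomorphisms $f_k$ with $f_k(q)=q$, $f_k'(q)=k$ and $f_k(q')\to q$ for a fixed $q'\neq q$. So ``exponential domination'' at the basepoint alone cannot resolve the tension you describe.

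What closes the argument --- and this is where the compactness of $N$ enters a second time --- is that the pointwise expansion upgrades to expansion on a neighborhood of \emph{uniform} size. The metrics $g^{\tilde m}$ form a continuous family over the compact $N$, each defined on a neighborhood of $\ell^{+}_\rho(\tilde m)$; hence there exist $r_0>0$ and $C\geq 1$ such that for every $m\in N$ the domain of $g^{\tilde m}$ contains the ball of radius $r_0$ around $\ell^{+}_\rho(\tilde m)$ for a fixed background metric on $\Ein_n$, and on that ball $g^{\tilde m}_y$ is $C$-comparable to $g^{\tilde m}_{\ell^{+}_\rho(\tilde m)}$ (tangent spaces being identified via the background structure). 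Applying this to $\tilde m=\tilde{\phi}^{s_k}(p)$, whose $\ell^{+}_\rho$-image is still $q$ by flow-invariance, one finds that every path starting at $q$ and either reaching $q'$ inside that ball or exiting the ball has $g^{\tilde{\phi}^{s_k}(p)}$-length at least $C^{-2}b^{-1/2}e^{as_k/2}$ times a fixed positive constant whenever $q\neq q'$; this tends to $+\infty$ and contradicts $\op{dist}_{g^{\tilde{\phi}^{s_k}(p)}}(q,q')\to 0$. With this uniformity supplied your proof is complete; as written, the decisive inequality is asserted rather than established.
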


\begin{proof}
Corollary of item (3) in \S~\ref{sub.progeodesic} (and from the compactness of $\Gamma\backslash\HH^{n}$).
\end{proof}

Therefore, $\ell_\rho^{\pm}$ induce $\Gamma$-equivariant maps $\bar{\ell}_{\rho}^{\pm}: \partial\HH^{n} \to \Ein_{n}$. 

\begin{prop}
Let $\alpha$ be the map from $\T^1 \HH^n$ to itself which sends the
vector $(x,v)$ to $(x,-v)$. Then $\ell^+_\rho=\ell^-_\rho \circ
\alpha$.
\label{prop:ellconj}
\end{prop}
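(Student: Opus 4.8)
The plan is to reduce the identity to the two induced boundary maps and then check it on the dense set of points coming from loxodromic elements. First, by the preceding lemma $\ell^{+}_{\rho}=\bar{\ell}^{+}_{\rho}\circ\ell^{+}$ and $\ell^{-}_{\rho}=\bar{\ell}^{-}_{\rho}\circ\ell^{-}$. Since $\alpha$ reverses the orientation of each geodesic it swaps the two extremities, \ie $\ell^{-}\circ\alpha=\ell^{+}$, whence $\ell^{-}_{\rho}\circ\alpha=\bar{\ell}^{-}_{\rho}\circ\ell^{+}$. As $\ell^{+}$ is onto $\partial\HH^{n}$, the proposition becomes equivalent to the equality of the two continuous maps $\bar{\ell}^{+}_{\rho},\bar{\ell}^{-}_{\rho}:\partial\HH^{n}\to\Ein_{n}$. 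Because the attracting fixed points $x^{+}_{\gamma}$ are dense in $\partial\HH^{n}$ (item (6) of \S~\ref{sub.progeodesic}) and the $\bar{\ell}^{\pm}_{\rho}$ are continuous, it will suffice to prove $\bar{\ell}^{+}_{\rho}(x^{+}_{\gamma})=\bar{\ell}^{-}_{\rho}(x^{+}_{\gamma})$ for every loxodromic $\gamma\in\Gamma$.

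Fix such a $\gamma$ and choose $p=(x,v)$ tangent to its axis with $\ell^{+}(p)=x^{+}_{\gamma}$, so that $\tilde{\phi}^{T}(p)=\gamma(p)$ for some $T>0$ (item (5)). Combining the $\tilde{\phi}^{t}$-invariance of $\ell^{+}_{\rho}$ with its $\Gamma$-equivariance gives $\rho(\gamma)\,\ell^{+}_{\rho}(p)=\ell^{+}_{\rho}(\gamma p)=\ell^{+}_{\rho}(\tilde{\phi}^{T}p)=\ell^{+}_{\rho}(p)$, so $\bar{\ell}^{+}_{\rho}(x^{+}_{\gamma})=\ell^{+}_{\rho}(p)$ is a fixed point of $\rho(\gamma)$. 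Next I would evaluate the expanding clause of Remark~\ref{rk.def2anosov} at $t=T$ and feed in the equivariance ${g}^{\gamma p}(d\rho(\gamma)w,d\rho(\gamma)w)={g}^{p}(w,w)$; this shows that $d\rho(\gamma)$ contracts every vector tangent to $\Ein_{n}$ at $\ell^{+}_{\rho}(p)$, so that $\ell^{+}_{\rho}(p)$ is the \emph{attracting} fixed point of $\rho(\gamma)$.

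Then I would run the same analysis at the flipped point $q:=\alpha(p)$. From $\tilde{\phi}^{T}p=\gamma p$ one gets $\tilde{\phi}^{-T}p=\gamma^{-1}p$, and since $\alpha$ commutes with $\gamma$ and satisfies $\tilde{\phi}^{t}\circ\alpha=\alpha\circ\tilde{\phi}^{-t}$, this yields $\tilde{\phi}^{T}(q)=\gamma^{-1}(q)$. Hence $q$ is a periodic point whose forward flow realizes $\gamma^{-1}$, and the argument of the previous paragraph, applied now to $\ell^{-}_{\rho}$ and to the contracting clause of Remark~\ref{rk.def2anosov}, identifies $\ell^{-}_{\rho}(q)=\bar{\ell}^{-}_{\rho}(x^{+}_{\gamma})$ with the repelling fixed point of $\rho(\gamma^{-1})$, that is, with the attracting fixed point of $\rho(\gamma)$. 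Consequently $\bar{\ell}^{+}_{\rho}(x^{+}_{\gamma})$ and $\bar{\ell}^{-}_{\rho}(x^{+}_{\gamma})$ are both attracting fixed points of $\rho(\gamma)$.

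The hard part will be to conclude that this attracting fixed point is unique: one must show that the loxodromic element $\rho(\gamma)$ acts on $\Ein_{n}$ with North--South dynamics, \ie with a single attracting and a single repelling fixed point. This proximality is precisely the structural input supplied by the Anosov hypothesis (the image of $\rho$ is formed of loxodromic elements of $\SO_{0}(2,n)$); granting it, $\bar{\ell}^{+}_{\rho}(x^{+}_{\gamma})=\bar{\ell}^{-}_{\rho}(x^{+}_{\gamma})$. Density of the $x^{+}_{\gamma}$ together with continuity of $\bar{\ell}^{\pm}_{\rho}$ then upgrades this to $\bar{\ell}^{+}_{\rho}=\bar{\ell}^{-}_{\rho}$ on all of $\partial\HH^{n}$, which is exactly $\ell^{+}_{\rho}=\ell^{-}_{\rho}\circ\alpha$. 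The only genuinely delicate points are the uniqueness of the attracting fixed point and the correct reading of attracting versus repelling off the exponential growth conditions; the initial reduction and the final density argument are routine.
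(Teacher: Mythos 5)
Your strategy is the same as the paper's: reduce to the induced boundary maps, identify $\bar{\ell}^{+}_{\rho}(x^{+}_{\gamma})$ and $\bar{\ell}^{-}_{\rho}(x^{+}_{\gamma})$ as attracting fixed points of $\rho(\gamma)$ via the expansion/contraction clauses and equivariance (this is the paper's Lemma~\ref{le:attractive}), invoke uniqueness of the attracting fixed point, and conclude by density and continuity. All of those steps are carried out correctly, including the computation $\tilde{\phi}^{T}(\alpha p)=\gamma^{-1}(\alpha p)$ and the observation that a repelling fixed point of $\rho(\gamma^{-1})$ is an attracting fixed point of $\rho(\gamma)$.

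The one step you leave unproven --- uniqueness of the attracting fixed point of $\rho(\gamma)$ in $\Ein_{n}$ --- is a genuine gap in the write-up, but it is exactly the content of the paper's Lemma~\ref{le:attractiveone}, and you somewhat mislocate its source. It is not ``North--South dynamics supplied by the Anosov hypothesis''; no further structural input from the Anosov property is needed beyond the existence, already established, of an attracting fixed point in $\Ein_{n}$. The paper's argument is elementary: if $x^{+}\in\Ein_{n}\subset\SS(\RR^{2,n})$ has a neighborhood $U$ in $\Ein_{n}$ attracted to it under $\rho(\gamma)^{n}$, then the convex hull of $U$ in an affine chart is a neighborhood of $x^{+}$ in the ambient projective space (because $\Ein_{n}$ is locally a one-sheeted hyperboloid there) and is still attracted to $x^{+}$; hence $x^{+}$ is an attracting fixed point of the projective transformation $\rho(\gamma)$ acting on all of $\PP(\RR^{2,n})$, and attracting fixed points of projective automorphisms are unique (the top eigenvalue direction). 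Filling in this lemma closes your proof, which is otherwise the argument of the paper.
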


Before proving this proposition we need a few lemmas : 

\begin{lemma}
\label{le:attractive}
Let $\gamma$ be an element of $\Gamma$. Then $\bar{\ell}_\rho^{+}(x^{+}_{\gamma})$ (resp. $\bar{\ell}_\rho^-(x^{-}_{\gamma})$) 
is an attractive (resp. repelling) fixed point of $\rho(\gamma)$ (cf. item (5) in \S~\ref{sub.progeodesic}).
\end{lemma}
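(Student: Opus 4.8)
The plan is to localize the whole statement to the axis of $\gamma$, where item (5) of \S\ref{sub.progeodesic} turns the action of $\rho(\gamma)$ into a time-$T$ step of the geodesic flow, and then to feed this into the exponential estimates of Remark~\ref{rk.def2anosov}. First I would pick a vector $p=(x,v)\in\T^1\HH^n$ tangent to the $\gamma$-invariant geodesic, oriented so that $\ell^+(p)=x^+_\gamma$ and $\ell^-(p)=x^-_\gamma$. By item (5) there is $T>0$ with $\tilde\phi^T(p)=\gamma\cdot p$, and since the geodesic flow commutes with the isometry $\gamma$, this iterates to $\gamma^k\cdot p=\tilde\phi^{kT}(p)$ for all $k\geq 0$.

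Next I would identify the fixed points. Writing $\xi^+:=\ell^+_\rho(p)=\bar{\ell}_\rho^{+}(x^+_\gamma)$, the simultaneous $\tilde\phi^t$-invariance and $\Gamma$-equivariance of $\ell^+_\rho$ give
\[
\rho(\gamma)\cdot\xi^+=\ell^+_\rho(\gamma\cdot p)=\ell^+_\rho(\tilde\phi^T(p))=\ell^+_\rho(p)=\xi^+,
\]
so $\xi^+$ is fixed by $\rho(\gamma)$; the same computation with $\ell^-_\rho$ shows that $\xi^-:=\bar{\ell}_\rho^-(x^-_\gamma)$ is fixed.

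The substance is to promote ``fixed'' to ``attractive'', and here the metric family $g^p$ enters. Fix $w\in\T_{\xi^+}\Ein_n$. Since $\rho(\gamma)$ fixes $\xi^+$, the iterates $d\rho(\gamma^k)w=(d_{\xi^+}\rho(\gamma))^kw$ stay in $\T_{\xi^+}\Ein_n=\T_{\ell^+_\rho(p)}\Ein_n$, so all the metrics below are evaluated at the basepoint $\ell^+_\rho(p)$ and the estimates apply verbatim. The $\Gamma$-equivariance of the family (Remark~\ref{rk.def2anosov}(1)), iterated, yields $g^{\gamma^k p}(d\rho(\gamma^k)w,d\rho(\gamma^k)w)=g^p(w,w)$, while the expanding estimate at $\ell^+$ (Remark~\ref{rk.def2anosov}(2)) applied with $t=kT$ gives $g^{\tilde\phi^{kT}(p)}(u,u)\geq b^{-1}e^{akT}g^p(u,u)$ for $u=d\rho(\gamma^k)w$. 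Combining these through $\gamma^k p=\tilde\phi^{kT}(p)$ yields
\[
g^p\bigl(d\rho(\gamma^k)w,\,d\rho(\gamma^k)w\bigr)\leq b\,e^{-akT}\,g^p(w,w),
\]
which tends to $0$ as $k\to\infty$. Hence $d_{\xi^+}\rho(\gamma)$ is a linear contraction of $\T_{\xi^+}\Ein_n$. The repelling claim is symmetric: for $w\in\T_{\xi^-}\Ein_n$ the contracting estimate at $\ell^-$ forces $g^p(d\rho(\gamma^k)w,d\rho(\gamma^k)w)\geq b^{-1}e^{akT}g^p(w,w)\to\infty$, so $d_{\xi^-}\rho(\gamma)$ expands.

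The only delicate point, and the main obstacle I anticipate, is the passage from this infinitesimal statement (spectral radius of the differential at the fixed point $<1$, resp. $>1$) to the topological one, that $\xi^+$ is genuinely attracting and $\xi^-$ genuinely repelling for the action of $\rho(\gamma)$ on $\Ein_n$. I would dispose of it by the standard local-linearization fact: a $C^1$ diffeomorphism fixing a point at which its differential has spectral radius $<1$ is, in a suitable adapted norm, a strict contraction on a neighborhood of that point, hence has it as an attracting fixed point (and the inverse statement for spectral radius $>1$ gives the repelling case).
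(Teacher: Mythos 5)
Your proof is correct and follows essentially the same route as the paper's: pick a unit vector on the axis of $\gamma$, use item (5) of \S~\ref{sub.progeodesic} to identify $\gamma^k\cdot p$ with $\tilde\phi^{kT}(p)$, and combine the $\Gamma$-equivariance of the metric family with the exponential expansion along $\ell^+_\rho$ from Remark~\ref{rk.def2anosov} to conclude that $d_{\xi^+}\rho(\gamma)$ is a contraction (and dually at $\xi^-$). Your added final paragraph, making explicit the standard passage from spectral radius $<1$ of the differential to a genuinely attracting fixed point, is a point the paper leaves implicit; it is a welcome clarification rather than a deviation.
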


\begin{proof}
Let $(x,v) \in \T^1\HH^n$ such that $\ell^+(x,v)=x^{+}_{\gamma}$ and $\ell^-(x,v)=x^{-}_{\gamma}$. 
The images $\zeta^{\pm}=\bar{\ell}_{\rho}^{\pm}(x^{\pm}_{\gamma})=\ell_\rho^{\pm}(x,v)$ are obviously fixed points of $\rho(\gamma)$.
For some $T>0$ we have $\tilde{\phi}^{nT}(x,v)=\gamma^{nT}(x,v)$. Consider the family
of metrics $g^{(x,v)}$ appearing in the alternative definition of Anosov representations (Remark~\ref{rk.def2anosov}).
Then,  for every vector $w$ tangent to $\ell^+_\rho(x^+_{\gamma})$ in $\Ein_n$:

$$ 
g^{\tilde{\phi}^{nT}(x,v)}_{\zeta^{+}}(w,w) \geq a\exp(nT)g^{(x,v)}_{\zeta^{+}}(w,w) 
$$

On the other hand, since this family of metrics is $\Gamma$-equivariant:

$$
g^{\tilde{\phi}^{nT}(x,v)}_{\zeta^{+}}(w,w)=g^{\gamma^{n}(x,v)}_{\zeta^{+}}(w,w)
=g^{(x,v)}(d_{\zeta^{+}}\rho(\gamma)^{-n}(w), d_{\zeta^{+}}\rho(\gamma)^{-n}(w))
$$

Hence $\zeta^{+}$ is an attractive fixed point. Similarly, $\zeta^{-}$ is repelling.
\end{proof}

\begin{lemma}
\label{le:attractiveone}
The image $\rho(\gamma)$ of an element of $\Gamma$ admits exactly one attractive fixed point in $\Ein_{n}$.
\end{lemma}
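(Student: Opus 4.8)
The plan is to translate the statement into linear algebra on $\RR^{2,n}$ and to read the dynamics on $\Ein_n=\SS(\cC_n)$ off the eigenvalue structure of $g:=\rho(\gamma)$. By Lemma~\ref{le:attractive} we already possess one attracting fixed point $\zeta^{+}=\bar\ell_\rho^{+}(x^{+}_\gamma)$ (here $x^{\pm}_\gamma$ are as in item~(5) of \S\ref{sub.progeodesic}); write $\zeta^{+}=\SS(e^{+})$ with $e^{+}\in\cC_n$, so that $g\,e^{+}=\lambda e^{+}$, and note $\lambda>0$ since $\zeta^{+}$ is a genuine fixed point and not merely fixed up to the antipodal map. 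First I would identify $\T_{\zeta^{+}}\Ein_n$ with $(e^{+})^{\orth}/\RR e^{+}$ — the tangent space to $\cC_n$ at $e^{+}$ is $(e^{+})^{\orth}$, and one quotients by the radial line — and check that the differential of $g$ at $\zeta^{+}$ is $\lambda^{-1}$ times the map induced by $g$ on this quotient. The exponential contraction furnished by the Anosov metrics (Remark~\ref{rk.def2anosov}) then says that every eigenvalue $\mu$ of $g$ on $(e^{+})^{\orth}/\RR e^{+}$ satisfies $|\mu|<\lambda$.

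The heart of the argument is a determinant computation along the flag $\RR e^{+}\subset(e^{+})^{\orth}\subset\RR^{2,n}$. The full spectrum of $g$ splits as $\{\lambda\}$, the $n$ eigenvalues $\mu_1,\dots,\mu_n$ on $(e^{+})^{\orth}/\RR e^{+}$, and the eigenvalue on $\RR^{2,n}/(e^{+})^{\orth}$, which equals $\lambda^{-1}$ because $g$ preserves $\mathrm{q}_{2,n}$ (from $\sca{gy}{g e^{+}}=\sca{y}{e^{+}}$ one gets $\sca{gy}{e^{+}}=\lambda^{-1}\sca{y}{e^{+}}$). Since $g\in\SO_0(2,n)$ has determinant $1$, the product of the $\mu_i$ is $1$; combined with $|\mu_i|<\lambda$ this forces $\lambda^{n}>1$, hence $\lambda>1$, so $\lambda$ strictly dominates every other eigenvalue in modulus. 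Thus $\lambda$ is the spectral radius, it is simple, and its eigenspace is the line $\RR e^{+}$. I would then observe that any attracting fixed point $\SS(f)$ of $g$ carries, by the same tangent-space computation, an eigenvalue $\lambda_f>0$ whose transverse eigenvalues have modulus $\le\lambda_f$; the determinant bound gives $\lambda_f\ge 1$, so $\lambda_f$ bounds the whole spectrum and hence equals the spectral radius $\lambda$. Simplicity of $\lambda$ then forces $f$ to be proportional to $e^{+}$, so $\SS(f)$ is either $\zeta^{+}$ or its antipode $\SS(-e^{+})$.

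It remains to discard the antipode, and this is the delicate point: $\SS(e^{+})$ and $\SS(-e^{+})$ have literally the same differential, so both are locally attracting for $g$ on $\Ein_n$, and the purely dynamical part of the argument cannot separate them. Here I would invoke the causal structure. The repelling fixed point $\zeta^{-}=\bar\ell_\rho^{-}(x^{-}_\gamma)=\SS(e^{-})$ of Lemma~\ref{le:attractive} is acausally related to $\zeta^{+}$, because $\ell_\rho$ takes its values in $\cY$; by Corollary~\ref{cor.list} this means $\sca{e^{+}}{e^{-}}<0$. The antipode then satisfies $\sca{-e^{+}}{e^{-}}>0$, so $\SS(-e^{+})$ is \emph{causally} related to $\zeta^{-}$ and cannot occur as the attracting fixed point singled out by the acausal data of the representation. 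I expect this disambiguation of the two antipodal locally-attracting points to be the main obstacle; the spectral step itself is robust, as it uses only eigenvalue moduli and the single relation $\prod_i\mu_i=1$, and so survives possible complex or non-semisimple eigenvalues.
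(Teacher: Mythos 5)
Your spectral argument is correct and takes a genuinely different route from the paper. The paper argues softly: if $U\subset\Ein_n$ is a neighborhood attracted to $x^+$, its convex hull in $\PP(\RR^{2,n})$ is still attracted to $x^+$ and is a full neighborhood of $x^+$ there (in an affine chart the quadric is a one-sheeted hyperboloid), so $x^+$ is an attractive fixed point of the \emph{projective} action, and such points are unique. Your version replaces this with the explicit eigenvalue bookkeeping along the invariant flag $\RR e^{+}\subset(e^{+})^{\orth}\subset\RR^{2,n}$; the computation is right (the quotient eigenvalue $\lambda^{-1}$, the relation $\prod_i\mu_i=1$, hence $\lambda>1$, simplicity and dominance of $\lambda$), and it buys you more than the paper's argument does, namely proximality of $\rho(\gamma)$ and the identification of the fixed point with the top eigenline. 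Both arguments land in exactly the same place: uniqueness of the attractive fixed point \emph{in $\PP(\RR^{2,n})$}.

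The point you flag as delicate is a real one, and neither your argument nor the paper's resolves it. On $\Ein_n=\SS(\cC_n)$, the double cover of the projective quadric, the antipode $\SS(-e^{+})$ is a fixed point with literally the same differential as $\SS(e^{+})$, hence is also locally attracting; so the lemma, read literally on $\SS(\cC_n)$ with the usual meaning of ``attractive fixed point,'' cannot be established by dynamical considerations alone --- the paper's proof silently identifies $\Ein_n$ with $\PP(\cC_n)$ in its final sentence. Your causal patch ($\sca{-e^{+}}{e^{-}}>0$ versus $\sca{e^{+}}{e^{-}}<0$, via Corollary~\ref{cor.list} and the fact that $\ell_\rho$ takes values in $\cY$) does not show that $\SS(-e^{+})$ fails to be attractive (it cannot, since it is attractive); it shows which of the two antipodes lies in the image of $\bar{\ell}^{+}_\rho$. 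That is a different statement from the lemma --- but it is the statement actually needed downstream, in Proposition~\ref{prop:ellconj}, to match $\bar{\ell}^{+}_\rho(x^{+}_\gamma)$ with $\bar{\ell}^{-}_\rho(x^{+}_\gamma)$ rather than with its antipode. If you write this up, phrase the conclusion as ``the attractive fixed point is unique up to the antipodal involution, and the acausality of $(\ell^{+}_\rho,\ell^{-}_\rho)$ selects a unique antipode,'' rather than as the lemma's literal wording.
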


\begin{proof}
Lex $x^+$ be an attractive fixed point of $\rho(\gamma)$ in $\Ein_n$ :
there exists a neighborhood $U$ of $x^+$ in $\Ein_n$ such that for all 
$y  \in U$, $\rho(\gamma)^n y \rightarrow x^+$. The convex hull
of $U$ in $\PP(\RR^{2,n})$ satisfies the same property, but it is also a neighborhood of $x^{+}$ in $\PP(\RR^{2,n})$
(it follows from the fact that in any affine chard of $\PP(\RR^{2,n})$ around $x$ the
Einstein space is a one sheet hyperboloid). Hence $x^{+}$
 is also an attractive fixed point in the projective space $\PP(\RR^{2,n})$.
 The lemma follows since attractive fixed points of projective automorphisms of $\PP(\RR^{2,n})$ are unique.
\end{proof}

\begin{proof}[Proof of proposition~\ref{prop:ellconj}]
It follows from lemmas~\ref{le:attractive}, \ref{le:attractiveone}
that this proposition is true when the geodesic tangent
to $(x,v)$ is preserved by a non trivial element of $\Gamma$. The
general case follows from the density of periodic orbits (item (7) in
\S~\ref{sub.progeodesic}). 
\end{proof}

\begin{coro}
The applications $\ell^\pm_{\rho}$ have the same image. They are homeomorphisms between
$\partial\HH^{n}$ and a topological acausal $(n-1)$-sphere $\Lambda_\rho$.
\end{coro}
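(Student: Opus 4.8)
The plan is to reduce everything to the single identity $\bar\ell_\rho^+=\bar\ell_\rho^-$ on $\partial\HH^n$, after which injectivity, acausality and the sphere statement all follow formally. First I would record how $\alpha$ acts on endpoints: since $(x,-v)$ is tangent to the same geodesic as $(x,v)$ traversed backwards, one has $\ell^+\circ\alpha=\ell^-$ and $\ell^-\circ\alpha=\ell^+$ on $\T^1\HH^n$. Now take $\xi\in\partial\HH^n$ and any $(x,v)$ with $\ell^+(x,v)=\xi$. By definition of the descended map, $\bar\ell_\rho^+(\xi)=\ell_\rho^+(x,v)$, and Proposition~\ref{prop:ellconj} gives $\ell_\rho^+(x,v)=\ell_\rho^-(\alpha(x,v))$. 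Since $\ell^-(\alpha(x,v))=\ell^+(x,v)=\xi$, the right-hand side is by definition $\bar\ell_\rho^-(\xi)$. Hence $\bar\ell_\rho^+=\bar\ell_\rho^-=:\bar\ell$. As $\ell^\pm$ are surjective and $\ell_\rho^\pm=\bar\ell_\rho^\pm\circ\ell^\pm$, the images of $\ell_\rho^+$, $\ell_\rho^-$, $\bar\ell_\rho^+$, $\bar\ell_\rho^-$ all coincide; call this common image $\Lambda_\rho$.

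Next I would extract injectivity and acausality simultaneously from the fact that $\ell_\rho=(\ell_\rho^+,\ell_\rho^-)$ takes values in $\cY$. Given distinct points $\xi\neq\eta$ of $\partial\HH^n$, choose $(x,v)\in\T^1\HH^n$ with $\ell^+(x,v)=\xi$ and $\ell^-(x,v)=\eta$ (possible because distinct boundary points are exactly the two endpoints of a geodesic). Then $\bar\ell(\xi)=\ell_\rho^+(x,v)$ and $\bar\ell(\eta)=\ell_\rho^-(x,v)$, and since $\ell_\rho(x,v)\in\cY$ the pair $(\bar\ell(\xi),\bar\ell(\eta))$ is acausal; by Corollary~\ref{cor.list} this means $\sca{\bar\ell(\xi)}{\bar\ell(\eta)}<0$, in particular $\bar\ell(\xi)\neq\bar\ell(\eta)$. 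This proves at once that $\bar\ell$ is injective and that any two distinct points of $\Lambda_\rho$ have negative scalar product, so that $\Lambda_\rho$ is acausal, again by Corollary~\ref{cor.list}.

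Finally I would dispatch the topological claims. The map $\bar\ell$ is continuous: since $\ell^+$ is a continuous open surjection (projection onto the leaf space of the stable foliation), $\partial\HH^n$ carries the quotient topology, and $\bar\ell\circ\ell^+=\ell_\rho^+$ is continuous, whence $\bar\ell$ is continuous. Now $\partial\HH^n$ is homeomorphic to the $(n-1)$-sphere, hence compact, and $\Ein_n$ is Hausdorff; a continuous injection from a compact space into a Hausdorff space is a homeomorphism onto its image. Therefore $\bar\ell_\rho^\pm$ are homeomorphisms from $\partial\HH^n$ onto $\Lambda_\rho$, and $\Lambda_\rho$, being the homeomorphic image of a sphere, is a topological $(n-1)$-sphere; combined with the previous paragraph it is acausal.

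I expect no serious obstacle: the whole difficulty has been absorbed into Proposition~\ref{prop:ellconj}. The one point demanding care is the bookkeeping of future/past endpoints under $\alpha$ that yields $\bar\ell_\rho^+=\bar\ell_\rho^-$; once this identification is in place, the key gain is that the $\cY$-valued condition on $\ell_\rho$ \emph{simultaneously} delivers injectivity and acausality, so no separate argument for either is needed.
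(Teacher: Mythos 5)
Your proof is correct and follows essentially the same route as the paper: both reduce everything to Proposition~\ref{prop:ellconj} and then exploit that $(\ell_\rho^+(z,\nu),\ell_\rho^-(z,\nu))\in\cY$ for the geodesic joining two given boundary points (your $(x,v)$ with $\ell^+(x,v)=\xi$, $\ell^-(x,v)=\eta$ is exactly the paper's heteroclinic point $(z,\nu)$). You are merely more explicit than the paper about the acausality via Corollary~\ref{cor.list} and about the continuity/compactness argument giving the homeomorphism onto a topological $(n-1)$-sphere.
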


\begin{proof}
The equality of the images is an immediate consequence of 
proposition~\ref{prop:ellconj}. We only have to show that the application $\ell^+_\rho$
(for example) is injective. Let $(x,v)$ and $(y,w)$ be two points of
$\T^1\HH^n$, belonging to two different stable leaves. Hence, there
exists a point $(z,\nu)$ which is in $(x,v)$'s stable leaf and $(y,w)$'s unstable
one. We thus have $(\ell_\rho^+(x,v),\ell_\rho^-(\alpha(y,w)))=(\ell_\rho^+(z,\nu)),
\ell_\rho^-(z,\nu)) \subseteq \mcal{Y}$. In particular, $\ell_\rho^+(x,v)$ and
$\ell_\rho^-(\alpha(y,w))=\ell_\rho^+(y,w)$ are joined by a spacelike geodesic and
must be different. 
\end{proof}

\begin{proof}[Proof of Theorem~\ref{teo:maintheorem}.]
The fact that Anosov representations are GH-regular
with acausal limit set follows from the last corollary and propositions
\ref{pro.adsregular} and \ref{pro.cosmolift}.
\end{proof}

\bibliography{artads}
\bibliographystyle{amsalpha}

\end{document}